\newcommand{\bD}{\mathbb{D}}
\newcommand{\1}{\mathbbm{1}}
\newcommand{\Zeros}{{\bf Zeros}\,}
\DeclareMathSymbol{\mlq}{\mathord}{operators}{``}
\DeclareMathSymbol{\mrq}{\mathord}{operators}{`'}
\newcommand{\E}{\mathbb E}
\newcommand{\R}{\mathbb{R}}
\newcommand{\N}{\mathbb{N}}
\newcommand{\C}{\mathbb{C}}
\newcommand{\Z}{\mathbb{Z}}
\renewcommand{\P}{\mathbb{P}}
\renewcommand{\Re}{\operatorname{Re}}
\renewcommand{\Im}{\operatorname{Im}}
\newcommand{\Var}{\mathop{\mathrm{Var}}\nolimits}
\newcommand{\eps}{\varepsilon}
\newcommand{\eqdistr}{\stackrel{d}{=}}
\newcommand{\todistr}{\overset{d}{\underset{n\to\infty}\longrightarrow}}
\newcommand{\todistrnon}{\overset{d}{\longrightarrow}}
\newcommand{\dd}{{\rm d}}
\newcommand{\eee}{{\rm e}}
\newcommand{\iii}{{\rm i}}
\theoremstyle{plain}
\newtheorem{theorem}{Theorem}[section]
\newtheorem{lemma}[theorem]{Lemma}
\newtheorem{corollary}[theorem]{Corollary}
\newtheorem{proposition}[theorem]{Proposition}
\theoremstyle{definition}
\theoremstyle{remark}
\newtheorem{remark}[theorem]{Remark}
\newcommand{\genstirlingII}[3]{%
  \genfrac{\{}{\}}{0pt}{#1}{#2}{#3}%
}
\newcommand{\stirling}[2]{\genfrac{[}{]}{0pt}{}{#1}{#2}}
\newcommand{\stirlingsec}[2]{\genfrac{\{}{\}}{0pt}{}{#1}{#2}}
\newcommand{\stirlingII}[2]{\genstirlingII{}{#1}{#2}}
\def\@tocline#1#2#3#4#5#6#7{\relax
  \ifnum #1>\c@tocdepth % then omit
  \else
    \par \addpenalty\@secpenalty\addvspace{#2}%
    \begingroup \hyphenpenalty\@M
    \@ifempty{#4}{%
      \@tempdima\csname r@tocindent\number#1\endcsname\relax
    }{%
      \@tempdima#4\relax
    }%
    \parindent\z@ \leftskip#3\relax \advance\leftskip\@tempdima\relax
    \rightskip\@pnumwidth plus4em \parfillskip-\@pnumwidth
    #5\leavevmode\hskip-\@tempdima
      \ifcase #1
       \or\or \hskip 1em \or \hskip 2em \else \hskip 3em \fi%
      #6\nobreak\relax
    \dotfill\hbox to\@pnumwidth{\@tocpagenum{#7}}\par
    \nobreak
    \endgroup
  \fi}
\begin{document}

%\author{Marcel Fenzl}
%\address{Marcel Fenzl:
%Institut f\"ur Mathematik, Universit\"at Z\"urich
%Office Y27-J-22
%Winterthurerstrasse 190, CH-8057 Z\"urich, Switzerland
%}
%\email{marcel.fenzl@math.uzh.ch}

\author{Zakhar Kabluchko}
\address{Zakhar Kabluchko: Institut f\"ur Mathematische Stochastik,
Westf\"alische Wilhelms-Universit\"at M\"unster
%Orl\'eans-Ring 10,
%48149 M\"unster, Germany
}
\email{zakhar.kabluchko@uni-muenster.de}

\author{Alexander Marynych}
\address{Alexander Marynych: Faculty of Computer Science and Cybernetics, Taras Shevchenko National University of Kyiv}
\email{marynych@unicyb.kiev.ua}

\author{Helmut Pitters}
\address{Helmut Pitters:
Mathematisches Institut, Universit\"at Mannheim
}
\email{helmut.pitters@mail.uni-mannheim.de}

\title[Mod-\texorpdfstring{$\varphi$}{phi} convergence of Stirling distributions]{Mod-\texorpdfstring{$\varphi$}{phi} convergence of Stirling distributions and limit theorems for zeros of their generating functions}

%%%%%%%%%%%%%%%%%%For figures see "Touchard Lambert Simulations.nb"

\keywords{Free multiplicative convolution, Lambert $W$-function, large deviations, local limit theorem, mod-$\varphi$ convergence,
random allocations, saddle point method, Stirling numbers of the first kind, Stirling numbers of the second kind, Touchard polynomials, zeros}

\subjclass[2020]{Primary: 11B73; Secondary: 30C15, 26C10, 05A16, 05A18, 60F10, 60F05, 33B99.}

\begin{abstract}
We study mod-$\varphi$ convergence of several probability distributions on the set of positive integers that involve Stirling numbers of both kinds and, as a consequence, derive various limit theorems for these distributions. We also derive closely related limit theorems for the distribution of zeros of the corresponding generating functions. For example, we identify the asymptotic distribution of zeros for the generating polynomial of the number of occupied boxes when $n$
balls are allocated equiprobably and independently among $\theta$ boxes in the regime when $\theta$ grows linearly with $n$.
% we prove mod-$\varphi$ convergence and compute the limit distribution of the zeros of the generating polynomials.
\end{abstract}

\maketitle

%\tableofcontents

\section{Introduction}

The \emph{Stirling cycle number} (or the \emph{Stirling number of the first kind}) $\stirling{n}{k}$ is defined as the number of permutations on $n\in\N$ elements that have exactly $k\in \{1,2,\ldots,n\}$ cycles. Alternatively, Stirling cycle numbers can be defined by their generating polynomial
$$
S_n(x) := x(x+1)(x+2) \ldots (x+n-1) = \sum_{k=1}^n \stirling{n}{k}x^k.
$$
The \emph{Stirling partition number} (or \emph{Stirling number of the second kind}) $\stirlingsec{n}{k}$ is defined as the number of partitions of a set of $n\in\N$ elements into $k\in \{1,\ldots,n\}$ non-empty subsets (blocks). We refer to the book~\cite[Section~6.1]{graham_knuth_patashnik_book} for an introduction to Stirling numbers of both kinds.

The generating functions of the Stirling partition numbers are called the \emph{Touchard polynomials} and denoted by
$$
T_n(x) = \sum_{k=1}^n \stirlingsec{n}{k} x^k.
$$
There are several equivalent definitions of these polynomials including the following ones:
\begin{equation}\label{eq:touchard_poly_def}
T_n(x)
=
\eee^{-x} \sum_{\ell=0}^\infty \frac{\ell^n x^\ell}{\ell!}
=
n! [t^n] \eee^{x (\eee^t-1)}
=
\eee^{-x} \left(x\frac{\dd }{\dd x}\right)^n \eee^x
=
B_n(x,\ldots,x),
\end{equation}
where $B_n(z_1,\ldots,z_n)$ is the $n$-th Bell polynomial. Touchard polynomials are also called single-variable Bell polynomials and have been studied by Ramanujan~\cite[Chapter~3]{berndt_book} before they were introduced by Touchard~\cite{touchard} and Bell~\cite{bell1934}; see~\cite[p.~48]{berndt_book} for a historical account. From the point of view of probability theory, the first identity in~\eqref{eq:touchard_poly_def} means that $T_n(x)$ is the $n$-th moment of the Poisson distribution with parameter $x>0$.

It was shown by Harper~\cite{harper} that all zeros of $T_n$ are real, nonpositive, and distinct. Using this observation, Harper~\cite{harper}  showed that the Stirling partition numbers satisfy a central limit theorem (CLT). Namely, consider a random variable $\xi_{n}$ with the following distribution:
\begin{equation}\label{eq:def_xi_n}
\P[\xi_{n}=k] = \frac 1 {B_n} \stirlingsec{n}{k} ,\qquad k\in \{1,\ldots,n\},
\end{equation}
where $B_n=T_n(1)$ is the $n$-th Bell number, that is the number of partitions of an $n$-element set into non-empty subsets. Clearly, $\xi_n$ can be thought of as the number of blocks in a uniform random partition of $\{1,2,\ldots,n\}$. Harper's CLT states that	
\begin{equation}\label{eq:CLT_stirl_sec}
\frac{\xi_n - \E \xi_n}{(\Var \xi_n)^{1/2}} \todistr \mathcal N(0,1),
\end{equation}
where $\mathcal N(0,1)$ is a random variable with the standard normal distribution, $\todistrnon$ denotes convergence in distribution,  and the normalization sequences are given explicitly by
$$
\E \xi_n = \frac{B_{n+1}}{B_n} - 1 \sim \frac n{\log n}, \qquad \Var \xi_n = \frac{B_{n+2}}{B_{n}} - \left(\frac{B_{n+1}}{B_{n}}\right)^2 -  1 \sim \frac{n}{\log^2 n},\quad n\to\infty.
$$
The notation $a(n) \sim b(n)$ means that $\lim_{n \to\infty} (a(n)/b(n)) = 1$.
For the proofs of these facts, see~\cite{harper} or~\cite[Chapter~4]{sachkov_book}. %By Slutsky's lemma, the variance in~\eqref{eq:CLT_stirl_sec} can be replaced by its asymptotic expression, but this does not apply to the expectation.

Harper's  proof that the zeros of $T_n$ are real and nonnegative is elementary and hinges on a simple recurrence formula connecting $T_n(x)$ and $T_{n-1}'(x)$. Much more sophisticated arguments based on saddle point asymptotics have been used by Elbert~\cite{elbert1,elbert2} to derive a limiting distribution for the empirical measure of zeros. More precisely, for a polynomial $p\in\C[x]$ denote by $\Zeros(p)$ the multiset of (complex) zeros of $p$ counted with multiplicities. Consider the following probability measure on the nonnegative half-line $[0,+\infty)$:
\begin{equation}\label{eq:elbert_measures}
\rho_n(\cdot):= \frac{1}{n} \sum_{x\in\Zeros(T_n)} \delta_{-x/n}(\cdot).
\end{equation}
Here, $\delta_x$ denotes the Dirac unit mass at $x\in\R$. Then, see Theorem 2.1 in~\cite{elbert2}, the sequence of probability measures $(\rho_n(\cdot))_{n\in\N}$ converges weakly to a probability measure $\rho(\cdot)$ concentrated on $[0,\eee]$ and having the Stieltjes transform
\begin{equation}\label{eq:elbert_limit}
\int_{[0,\eee]}\frac{\rho(\dd t)}{z-t}=1-\eee^{W_0(-1/z)},\quad z\in\mathbb{C}\setminus [0,\eee],
\end{equation}
where $W_0$ is the principal branch of the Lambert $W$-function. Furthermore, it is known that $\rho$ is absolutely continuous with a strictly decreasing density, see Theorem 2.3 in~\cite{elbert2}. In~\cite{elbert1} Elbert establishes strong asymptotics for the polynomials $x\mapsto T_n(nx)$ which, together with~\eqref{eq:elbert_measures}-\eqref{eq:elbert_limit}, implies that the smallest zero of $x\mapsto T_n(nx)$ converges to $-\eee$. This proves a conjecture stated at the bottom of p.~729 in~\cite{Mezo+Corcino:2015}.

Turning to the Stirling numbers of the first kind, let $\eta_n$ be a random variable with the following distribution:
\begin{equation}\label{eq:def_eta_n}
\P[\eta_{n}=k] = \frac 1 {n!} \stirling{n}{k} ,\qquad k\in \{1,2,\ldots,n\}.
\end{equation}
Then, Goncharov's CLT (see, e.g., \cite[\S 5.1.1]{sachkov_book}) states that
\begin{equation}\label{eq:CLT_stirl_first}
\frac{\eta_n - \E \eta_n}{(\Var \eta_n)^{1/2}} \todistr \mathcal N(0,1).
\end{equation}
The expectation and the variance of $\eta_n$ are given explicitly by
$$
\E \eta_n = \sum_{k=1}^n \frac 1k =\log n +O(1),
\qquad
\Var \eta_n = \sum_{k=1}^n \left(\frac 1k - \frac 1{k^2}\right) = \log n+O(1),\quad n\to\infty.
$$

It is remarkable that Harper's~\cite{harper} proof of~\eqref{eq:CLT_stirl_sec} uses only the fact that the zeros of the Touchard polynomial are nonpositive (without requiring  any information on their positions), together with the property $\Var \xi_n\to \infty$, as $n\to\infty$. Trivially, the zeros of the polynomials $S_n(x)$ are nonpositive, too, so that Harper's method proves Goncharov's CLT~\eqref{eq:CLT_stirl_first} as well. It is also obvious that empirical measures defined by~\eqref{eq:elbert_measures} with $\Zeros (T_n)$ replaced by $\Zeros (S_n)$ converges weakly to the Lebesgue measure on $[0,1]$. Many examples of probability generating functions having only nonpositive zeros, their connections with so-called Polya frequency sequences and a detailed bibliography can be found in~\cite{pitman_probab_bounds}; see also~\cite{brenti1988unimodal}.

As it is usual in probability theory, the central limit theorem comes together with further results such as the local limit theorem, the Edgeworth asymptotic expansion, large deviations (which may be precise or logarithmic), moderate deviations on various scales, normality zones,  and so on. The notion of mod-$\varphi$ convergence, introduced and developed by Nikeghbali and collaborators~\cite{barbour_kowalski_nikeghbali,delbaen_kowalski_nikeghbali,
feray_meliot_nikeghbali_book,jacod_kowalski_nikeghbali_mod_Gauss,kowalski_nikeghbali_mod_Poi,
kowalski_nikeghbali_zeta,meliot_nikeghbali_statmech} with an important early contribution by Hwang~\cite{hwang_LD1,hwang_convrates},  is a powerful tool which provides a unified approach to all these results. Once a suitable version of mod-$\varphi$ convergence has been verified, all these limit theorems follow automatically.
In the references cited above it has been demonstrated that mod-$\varphi$ convergence is a common phenomenon in probability theory, combinatorics, number theory and statistical mechanics; see the book~\cite{feray_meliot_nikeghbali_book} for an introduction to this subject.

Referring to Section~\ref{subsec:def_mod_phi} for the definition of mod-$\varphi$ convergence, we consider here one of its most basic examples which is provided by the sequence $(\eta_n)_{n\in\N}$ defined above. Using the Weierstrass product formula for the Gamma function it is easy to check~\cite[Example~2.1.3]{feray_meliot_nikeghbali_book}, see also~\cite{NZ13GeneralizedWeightedMeasure}, that
\begin{equation}\label{eq:stirling_first_mod_phi_classical}
\lim_{n\to\infty} \frac{\E \eee^{z \eta_n}}{\eee^{(\log n)(\eee^{z}-1)}} = \frac 1 {\Gamma(\eee^z)}
\end{equation}
locally uniformly in the complex variable $z\in \C$. Moreover, the speed of convergence in~\eqref{eq:stirling_first_mod_phi_classical} is $O(1/n)$, again locally uniformly in $z\in\C$.  The denominator on the left-hand side is the moment generating function of the Poisson distribution with parameter $\log n$. Therefore, the sequence $\eta_n$ is said to converge in the mod-Poisson sense with speed $\log n$.  Equation~\eqref{eq:stirling_first_mod_phi_classical} suggests the heuristic approximation
\begin{equation}\label{eq:mod_Poi}
\mlq\mlq \eta_n \eqdistr \text{Poi}(\log n) + \Xi +  O(1/n)\mrq\mrq,
\end{equation}
where $\eqdistr$ denotes equality in distribution, $\text{Poi}(\log n)$ is a random variable having Poisson distribution with parameter $\log n$, while $\Xi$ is an independent ``random variable'' with ``moment generating function'' $\E \eee^{z \Xi} = 1/\Gamma(\eee^z)$. Even though it turns out that there is no random variable $\Xi$ having the required moment generating function, \eqref{eq:mod_Poi} provides a very useful way of thinking about mod-$\varphi$ convergence. Indeed, all limit theorems for $\eta_n$ (including, for example, the complete asymptotic expansion of large deviation probabilities~\cite[\S~3.2]{feray_meliot_nikeghbali_book}) take the same form as they would do for the sequence of ``random variables'' $\text{Poi}(\log n) + \Xi$,
if $\Xi$ would exist.

The present paper started with an attempt to answer the question whether some analogues of~\eqref{eq:mod_Poi} hold for distributions related to the Stirling numbers of the \emph{second} kind. The purpose of our paper is two-fold. First, we prove that three natural families of probability distributions related to Stirling numbers of both kinds converge in the mod-$\varphi$ sense. Even in the case of Stirling numbers of the first kind, this result is new and is different from the known mod-Poisson convergence~\eqref{eq:stirling_first_mod_phi_classical}. As a consequence of mod-$\varphi$ convergence, we derive some limit theorems satisfied by these distributions. Second, we show how
the mod-$\varphi$ convergence is related to the limit theorems for empirical measures of zeros of the corresponding generating functions.

The paper is organized as follows. In Section~\ref{sec:families} we define three families of probability distributions, that are related to the Stirling numbers of both kinds, and that we shall work with.
Section~\ref{subsec:def_mod_phi} contains our main results, namely, the mod-$\varphi$ convergence of the three families of Stirling distributions introduced in Section~\ref{sec:families} and its various corollaries. In Section~\ref{sec:finite_free} we discuss limit theorems for the empirical measures of zeros of the corresponding generating functions. The proofs, which are skipped in the main part, are given in Sections~\ref{sec:mod-phi-proofs} and~\ref{sec:finite_free_proof}. In the Appendix we collect a number of analytic results used in our proofs dividing them into two parts. In the first part, that is Section~\ref{appendix_Lambert}, various properties of the Lambert $W$-function and its branches are gathered. In the second part (Section~\ref{sec:saddle_point}) we discuss how to derive uniform estimates in the classical saddle point method, which is our main tool in proving the mod-$\varphi$ convergence.

\section{Main results}

\subsection{Families of Stirling distributions}\label{sec:families}
We say that a random variable $X^{(1)}_{n,\theta}$ has a \emph{Stirling distribution of the first kind} (or a \emph{Stirling--Karamata distribution}) with parameters $n\in \N$ and $\theta>0$ if
\begin{equation}\label{eq:def_X_n_theta_no_n}
\P[X^{(1)}_{n,\theta}=k] = \frac{\stirling{n}{k} \theta^k}{S_n(\theta)},
\qquad
k\in \{1,2,\ldots,n\}.
\end{equation}
For $\theta=1$, this distribution appears as the law of the number of cycles $\eta_n$ in a uniform random permutation on $n$ elements, or as the law of the number of records in an i.i.d.\ sample of size $n$ from a continuous distribution. For general $\theta>0$, this is the distribution of the number of blocks in a random Ewens partition of $n$ elements with parameter $\theta$.

Similarly, we say that a random variable $X^{(2)}_{n,\theta}$ has the \emph{Stirling distribution of the second kind} with parameters  $n\in\N$ and $\theta>0$  if
\begin{equation}\label{eq:def_X_n_def}
\P[X^{(2)}_{n,\theta}=k] = \frac{\stirlingsec{n}{k} \theta^k}{T_n(\theta)},
\qquad
k\in \{1,2,\ldots,n\}.
\end{equation}
For $\theta=1$, this random variable counts the number of blocks $\xi_n$ in a uniform random partition of the set $\{1,\ldots,n\}$ and admits an elegant probabilistic representation due to Stam~\cite{stam83}. For general $\theta>0$, it describes the number of blocks in a  Gibbs random partition~\cite[Section~1.5]{pitman_book}.

Another probability distribution involving Stirling numbers of the second kind can be constructed using the identity
$$
x^n=\sum_{k=1}^{n}\stirlingsec{n}{k}x(x-1)\cdots(x-k+1),\quad n\in\N.
$$
Let $\theta$ be either a positive integer or a real number such that $\theta>n-1$. We say that a random variable $X^{(3)}_{n,\theta}$ has the {\em Stirling-Sibuya distribution} if
$$
\P[X^{(3)}_{n,\theta}=k] = \stirlingsec{n}{k} \frac{(\theta)^{\underline{k}}}{\theta^n} = \stirlingsec{n}{k} \frac{\theta(\theta-1)\cdots(\theta-k+1)}{\theta^n},
\qquad
k\in \{1,2,\ldots,n\},
$$
see \cite{Sibuya:2006}. Note that if $\theta\in \{1,2,\ldots, n\}$, then $X^{(3)}_{n,\theta}$ is  concentrated on  $\{1,2,\ldots,\theta\}$. If $\theta$ is a real parameter, then the constraint $\theta>n-1$ is needed to ensure positivity of the weights. Furthermore, if $\theta$ is an integer, then $X^{(3)}_{n,\theta}$ counts the number of occupied boxes when $n$ (distinguishable) balls are allocated equiprobably and independently among $\theta$ boxes. Alternatively, $X^{(3)}_{n,\theta}$ is the number of distinct values in a sample of $n$ i.i.d. random variables with the uniform distribution on $\{1,2,\ldots,\theta\}$. Distributions related to Stirling numbers have been reviewed in~\cite{charalambides,Sibuya:2006}.

For future use let us introduce the following notation for the generating functions of $X_{n,\theta}^{(i)}$, $i=1,2,3$. Put
$$
\mathcal{G}_{n,\theta}^{(i)}(t):=\sum_{k=1}^{n}\P[X^{(i)}_{n,\theta}=k] t^k,\quad t\in\C,\quad i=1,2,3.
$$
It is clear that $\mathcal{G}_{n,\theta}^{(i)}$ is a polynomial of degree $n$ if $i=1,2$. Furthermore,
$$
\mathcal{G}_{n,\theta}^{(1)}(t)~=~
\frac{S_n(\theta t)}{S_n(\theta)},\quad
\mathcal{G}_{n,\theta}^{(2)}(t)~=~
\frac{T_n(\theta t)}{T_n(\theta)},\quad t\in\C.
$$
The polynomial $\mathcal{G}_{n,\theta}^{(3)}$ is given by
$$
\mathcal{G}_{n,\theta}^{(3)}(t)~=~\sum_{k=1}^{n}\stirlingsec{n}{k} \frac{\theta(\theta-1)\cdots(\theta-k+1)}{\theta^n}t^k,\quad t\in\C.
$$
and does not seem to belong to any simple family. The degree of $\mathcal{G}_{n,\theta}^{(3)}$ is equal to $n$ if $\theta>n-1$, and is equal to $\theta$ if $\theta\leq n$ is an integer. Note also that $\mathcal{G}_{n,\theta}^{(3)}$ is well-defined for non-integer $\theta\leq n-1$, yet it is not a generating function of any probability law. An integral representation for $\mathcal{G}_{n,\theta}^{(3)}$ will be given in Lemma~\ref{lem:stirling_3_integral_rep}. We shall also show that $\mathcal{G}_{n,\theta}^{(3)}$ is the so-called finite free multiplicative convolution of the Touchard and generalized Laguerre polynomials, see Section~\ref{sec:finite_free}.

In this paper we shall be interested in the asymptotics of the above distributions with the tilted parameter $\theta=n\vartheta$, where $\vartheta>0$ is fixed. %The mod-$\varphi$ behaviour under the Ewens measure for fixed $\theta$ was studied in .

%The moment generating functions of $X^{(1)}_{n,\vartheta n}$ and $X^{(2)}_{n,\vartheta n}$ are
%\begin{align}
%\E \eee^{zX^{(1)}_{n,\vartheta n}}
%&=\mathcal{G}_{n,\vartheta n}^{(1)}(\eee^z)=
%\frac{S_n(\vartheta \eee^z n)}{S_n(\vartheta n)}, \quad z\in\C,%\label{eq:moment_gener_X_n_1}\\
%\E \eee^{z X^{(2)}_{n,\vartheta n }}
%&=\mathcal{G}_{n,\vartheta n}^{(2)}(\eee^z)
%=\frac{T_n(\vartheta \eee^z n)}{T_n(\vartheta n)},\quad z\in\C. %\label{eq:moment_gener_X_n}
%\end{align}

\subsection{Mod-\texorpdfstring{$\varphi$}{phi} convergence for the families of Stirling distributions}\label{subsec:def_mod_phi}

\subsubsection{Definition of mod-\texorpdfstring{$\varphi$}{phi} convergence.}
In the literature several non-equivalent definitions of mod-$\varphi$ convergence appear. The notion we shall use here is close but not equivalent to the definition used in the book~\cite[Definition~1.1.1]{feray_meliot_nikeghbali_book}.
Let $(X_n)_{n\in\N}$ be a sequence of random variables with values in $\R$ whose Laplace transforms $\E \eee^{z X_n}$ exist in some  strip $\mathcal H=\{z\in \C: \Re z \in (\beta_-,\beta_+)\}$ with $-\infty \leq \beta_- < \beta_+ \leq +\infty$.
In order to state the definition of mod-$\varphi$ convergence we need the following ingredients:
\begin{itemize}
\item[($\Phi$1)] a sequence $(w_n)_{n\in\N}$ of positive numbers with $\lim_{n\to\infty} w_n = +\infty$;
\item[($\Phi$2)] an open, connected set $\mathcal D \subset \mathcal H$ containing  the interval $(\beta_-,\beta_+)$;
\item[($\Phi$3)] an analytic function $\varphi:\mathcal D \to\C$ whose restriction to the interval $(\beta_-,\beta_+)$ is real-valued and strictly convex;
\item[($\Phi$4)] an analytic function $\Psi(z) : \mathcal D \to \C$ which does not vanish on $\mathcal{D}\cap \R$.
\end{itemize}

The sequence $(X_n)_{n\in\N}$ is said to \emph{converge mod-$\varphi$} with parameters listed in ($\Phi$1)-($\Phi$4) if
\begin{equation}\label{eq:mod_phi_def_conv}
\lim_{n\to\infty} \frac{\E\eee^{zX_n}}{\eee^{w_n\varphi(z)}} = \Psi(z),
\end{equation}
and the convergence is uniform on compact subsets of  $\mathcal D$. In~\cite[Definition~1.1.1]{feray_meliot_nikeghbali_book}, $\eee^{\varphi(z)}$ is required to be a moment generating function of some infinitely divisible distribution, although the most essential limit theorems of~\cite{feray_meliot_nikeghbali_book} continue to hold without this requirement, as discussed in~\cite[\S~4.5.2]{feray_meliot_nikeghbali_book}, see also~\cite{kabluchko_marynych_sulzbach}. We omit this requirement since in our examples $\eee^{\varphi(z)}$  cannot be represented as a Laplace transform of a probability distribution. We refer to~\cite[\S~4.5.3]{feray_meliot_nikeghbali_book} for a discussion of the closely related notion of quasi-powers introduced by Hwang~\cite{hwang_LD1}, \cite{hwang_convrates}.

\subsubsection{Mod-\texorpdfstring{$\varphi$}{phi} convergence for \texorpdfstring{$X_{n,\vartheta n}^{(i)},i=1,2,3$}{X}.}

To state our results on mod-$\varphi$ convergence we shall need the following functions. Put
\begin{equation}\label{eq:def_L_1}
L_1(z) := (z+1)\log (z+1) - z \log z,\quad z \in\C \setminus (-\infty,0],
\end{equation}
and
\begin{equation}\label{eq:def_L_2}
L_2(z) :=  W_0(1/z) + \frac{1}{W_0(1/z)} - z + \log z,\quad z\in\C \setminus (-\infty,0].
\end{equation}
Recall that $W_0$ denotes the principal branch of the Lambert $W$-function. The definition and some useful properties of this function are collected in Appendix~\ref{appendix_Lambert}. In particular, since $W_0$ is analytic in the slitted plane $\C\setminus (-\infty,-1/\eee]$ and does not vanish, $L_2$ is well-defined and analytic in $\C\setminus (-\infty,0]$.

\begin{theorem}\label{thm:mod_phi_stirling1}
Fix $\vartheta>0$. Then, there is an open set $\mathcal{D}_1\subset \C$ such that $\R \subset \mathcal{D}_1$ and such that the moment generating functions of the random variables $X^{(1)}_{n,\vartheta n}$ satisfy
\begin{equation}\label{eq:mod_phi_1}
\Psi_1(z)
:=
\lim_{n\to\infty} \frac{\E \eee^{z X^{(1)}_{n,\vartheta n}}}{\eee^{n \varphi_1(z;\vartheta)}}
=
\eee^{z/2} \sqrt{\frac{\vartheta+1}{\vartheta \eee^z +1}},
\qquad z\in \mathcal{D}_1,
\end{equation}
where $\varphi_1(z;\vartheta) := L_1(\vartheta \eee^z) - L_1(\vartheta)$ with $L_1$ as in~\eqref{eq:def_L_1}. Moreover, \eqref{eq:mod_phi_1} holds locally uniformly on $\mathcal{D}_1$ with the speed $O(1/n)$.
\end{theorem}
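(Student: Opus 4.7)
The probability weights $\P[X^{(1)}_{n,\vartheta n}=k]=\stirling{n}{k}(\vartheta n)^k/S_n(\vartheta n)$ together with $S_n(\theta)=\Gamma(\theta+n)/\Gamma(\theta)$ give the exact closed form
\[
\E\eee^{zX^{(1)}_{n,\vartheta n}}
\;=\;\frac{S_n(\vartheta n\,\eee^{z})}{S_n(\vartheta n)}
\;=\;\frac{\Gamma(n(\vartheta\eee^{z}+1))}{\Gamma(n\vartheta\eee^{z})}\cdot\frac{\Gamma(n\vartheta)}{\Gamma(n(\vartheta+1))}.
\]
Thus the theorem is purely a statement about the ratio of four Gamma functions whose arguments are $n$ times a fixed complex number, and the natural and essentially only tool is Stirling's expansion with uniform error.

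I would choose $\mathcal{D}_1$ to be an open connected neighborhood of $\R$ on which both $\vartheta\eee^{z}$ and $\vartheta\eee^{z}+1$ remain in the slitted plane $\C\setminus(-\infty,0]$ (for example, a horizontal strip $\{|\Im z|<\pi-\delta\}$). On compacta in $\mathcal{D}_1$ one has, uniformly,
\[
\log\Gamma(w)=\bigl(w-\tfrac12\bigr)\log w - w + \tfrac12\log(2\pi) + O(|w|^{-1}),\qquad |w|\to\infty,
\]
in any sector avoiding the negative real axis. Applying this to each of the four Gamma-arguments and collecting terms splits the expansion of $\log(\E\eee^{zX^{(1)}_{n,\vartheta n}})$ into three parts. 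The $O(n)$-contributions give $n[(\vartheta\eee^{z}+1)\log(\vartheta\eee^{z}+1)-\vartheta\eee^{z}\log(\vartheta\eee^{z})] - n[(\vartheta+1)\log(\vartheta+1)-\vartheta\log\vartheta]=n\varphi_1(z;\vartheta)$, after the $n\log n$ and linear-in-$n$ pieces cancel between numerator and denominator. The $O(1)$-contributions from the $-\tfrac12\log w$ and $\tfrac12\log(2\pi)$ terms combine into
\[
\tfrac12\log\tfrac{\vartheta\eee^{z}}{\vartheta\eee^{z}+1}-\tfrac12\log\tfrac{\vartheta}{\vartheta+1}
\;=\;\tfrac{z}{2}+\tfrac12\log\tfrac{\vartheta+1}{\vartheta\eee^{z}+1}
\;=\;\log\Psi_1(z).
\]
The four $O(1/n)$ remainders merge into a single $O(1/n)$ error, uniform on compacta of $\mathcal{D}_1$. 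Exponentiating gives~\eqref{eq:mod_phi_1} with the claimed rate.

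The structural hypotheses ($\Phi$1)--($\Phi$4) are easy: $w_n=n\to\infty$; $\Psi_1$ is analytic on $\mathcal{D}_1$ and strictly positive on $\R$; and writing $u:=\vartheta\eee^{x}>0$ one computes $\varphi_1''(x;\vartheta)=u\log(1+1/u)-u/(1+u)$, which is strictly positive since $\log(1+t)>t/(1+t)$ for $t>0$, so $\varphi_1(\,\cdot\,;\vartheta)$ is real and strictly convex on $\R$. There is no real obstacle here: the entire argument is a bookkeeping exercise in Stirling's formula. The only point requiring care is ensuring that the remainder $O(|w|^{-1})$ in the Stirling expansion is controlled uniformly on compact subsets of $\mathcal{D}_1$ as $w$ ranges over $n\vartheta\eee^{z}$, etc.; this is standard and the paper reserves Section~\ref{sec:saddle_point} for analogous uniform estimates needed in the much harder saddle-point analyses of Theorems concerning $X^{(2)}$ and $X^{(3)}$.
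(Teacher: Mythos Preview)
Your proposal is correct and follows essentially the same route as the paper: both express $\E\eee^{zX^{(1)}_{n,\vartheta n}}=S_n(\vartheta n\eee^{z})/S_n(\vartheta n)$ as a ratio of Gamma functions and apply the uniform Stirling expansion, the only cosmetic difference being that the paper first packages the asymptotics of $S_n(zn)/n!$ into a separate proposition (Proposition~\ref{prop:asympt_factorial1}) and then takes the quotient, while you apply Stirling directly to the four Gamma factors. Your bookkeeping of the $O(n)$, $O(1)$, and $O(1/n)$ contributions is accurate, and your added verification of the convexity of $\varphi_1$ and positivity of $\Psi_1$ is a harmless supplement.
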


\begin{theorem}\label{thm:mod_phi_stirling2}
Fix $\vartheta>0$. Then, there is an open set $\mathcal{D}_2\subset \C$ such that $\R \subset \mathcal{D}_2$ and such that the moment generating functions of the random variables $X^{(2)}_{n,\vartheta n}$ satisfy
\begin{equation}\label{eq:mod_phi_2}
\Psi_2(z) :=
\lim_{n\to\infty} \frac{\E \eee^{zX^{(2)}_{n,\vartheta n}}}{\eee^{n \varphi_2(z;\vartheta)}}
=
\sqrt{\frac{W_0(\vartheta^{-1})+1}{W_0(\vartheta^{-1} \eee^{-z})+1}},
\qquad z\in \mathcal{D}_2,
\end{equation}
where $\varphi_2(z;\vartheta) := L_2(\vartheta \eee^z) - L_2(\vartheta)$ with $L_2$ as in~\eqref{eq:def_L_2}. Moreover, \eqref{eq:mod_phi_2} holds locally uniformly on $\mathcal{D}_2$ with the speed $O(1/n)$.
\end{theorem}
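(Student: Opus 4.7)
The plan is to represent the moment generating function $\E \eee^{z X^{(2)}_{n,\vartheta n}} = T_n(\vartheta n \eee^z)/T_n(\vartheta n)$ via a saddle-point contour integral and to apply the uniform saddle-point estimates collected in Section~\ref{sec:saddle_point}. Starting from the representation $T_n(x) = n!\,[t^n]\eee^{x(\eee^t-1)}$ in \eqref{eq:touchard_poly_def} and Cauchy's formula, one rewrites
$$
T_n(\vartheta n \eee^z) \;=\; \frac{n!}{2\pi \iii}\oint \exp\bigl(n\,h_z(t)\bigr)\,\frac{\dd t}{t},
\qquad
h_z(t) := \vartheta\eee^z(\eee^t-1) - \log t,
$$
the contour being any small positively oriented loop encircling the origin. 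The scaling $\theta=\vartheta n$ is precisely the one that makes the $n$-dependence explicit in the exponent $n h_z(t)$ and thus opens the way to saddle-point analysis.

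The saddle-point equation $h_z'(t)=0$ reads $t\eee^t = \vartheta^{-1}\eee^{-z}$, whose solution through $z=0$ along the positive real axis is the principal branch $t_0(z) := W_0(\vartheta^{-1}\eee^{-z})$; this is what explains the presence of the Lambert $W$-function in the definition of $L_2$. Using the saddle relation and the identity $\log t_0(z) = -t_0(z) - \log(\vartheta\eee^z)$, a short calculation gives
$$
h_z(t_0(z)) \;=\; \tfrac{1}{t_0(z)} - \vartheta\eee^z - \log t_0(z)
\;=\; t_0(z) + \tfrac{1}{t_0(z)} - \vartheta\eee^z + \log(\vartheta\eee^z)
\;=\; L_2(\vartheta\eee^z),
$$
while the second derivative at the saddle equals
$$
h_z''(t_0(z)) \;=\; \vartheta\eee^z\eee^{t_0(z)} + \tfrac{1}{t_0(z)^2} \;=\; \frac{1+t_0(z)}{t_0(z)^2}.
$$
The classical Perron--Daniels saddle-point expansion on a contour passing vertically through $t_0(z)$ then yields
$$
T_n(\vartheta n \eee^z)
\;=\;
\frac{n!}{\sqrt{2\pi n\,\bigl(1+W_0(\vartheta^{-1}\eee^{-z})\bigr)}}\,\eee^{n L_2(\vartheta\eee^z)}\bigl(1+O(1/n)\bigr),
$$
the factor $1/t_0(z)$ from $\dd t/t$ and the factor $t_0(z)$ from $\sqrt{h_z''(t_0(z))}$ combining to leave only $\sqrt{1+t_0(z)}$ in the denominator. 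Specialising at $z=0$ and taking the ratio cancels the $n!$ and the $\sqrt{n}$, and produces
$$
\frac{T_n(\vartheta n \eee^z)}{T_n(\vartheta n)}
\;=\;
\eee^{n\varphi_2(z;\vartheta)}\,\Psi_2(z)\,\bigl(1 + O(1/n)\bigr),
$$
which is exactly \eqref{eq:mod_phi_2}.

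The main obstacle, and the only step that is not a routine saddle-point computation, is making every estimate uniform in $z$ on compact subsets of an open set $\mathcal{D}_2\subset\C$ containing $\R$, with error $O(1/n)$ uniform in $z$. To this end one has to (a) identify $\mathcal{D}_2$ as a horizontal strip around $\R$ on which both $\vartheta\eee^z\notin(-\infty,0]$ (required for $L_2$) and $\vartheta^{-1}\eee^{-z}$ avoids the Lambert branch cut $(-\infty,-1/\eee]$, so that $t_0(z)$ is holomorphic with $\Re t_0(z)>0$; (b) choose a common integration contour (for instance a circle of radius $|t_0(z)|$ or a $z$-dependent steepest-descent path) along which $\Re h_z(t)$ attains its maximum only at $t_0(z)$, with a uniform quadratic lower bound in a neighbourhood and a uniform linear drop outside; and (c) apply the uniform saddle-point estimates from Section~\ref{sec:saddle_point}. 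Nondegeneracy of the saddle is automatic since $1+t_0(z)$ stays bounded away from $0$ on compacta of $\mathcal{D}_2$, and the required bounds on $h_z$ follow from the analytic properties of $W_0$ summarised in Appendix~\ref{appendix_Lambert}.
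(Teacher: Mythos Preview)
Your approach is essentially the same as the paper's: the paper first proves a standalone asymptotic for $T_n(nz)$ (Proposition~\ref{prop:asympt_factorial2}) via exactly the saddle-point computation you describe, invoking Theorem~\ref{thm:fedoryuk} for the local uniformity in $z$, and then obtains~\eqref{eq:mod_phi_2} by applying this twice and taking the quotient. The one point the paper makes more explicit than your sketch is the verification that $|f(\cdot,z_0)|$ has a \emph{unique} maximum on the circle through the saddle when $z_0$ is real (done via the nonnegative-Taylor-coefficient argument), after which Theorem~\ref{thm:fedoryuk} automatically supplies the uniformity in a disk around each real $z_0$---so your step~(b) need only be checked pointwise for real $z$, not uniformly.
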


For the Stirling-Sibuya distribution $X^{(3)}_{n,\vartheta n}$ the mod-$\varphi$ convergence looks as follows.

\begin{theorem}\label{thm:mod_phi_stirling3}
Fix $\vartheta>0$. Then, there is an open set $\mathcal D_3\subset \C$ such that $\R \subset \mathcal{D}_3$ and such that the moment generating functions of the random variable $X^{(3)}_{n,\vartheta n}$ satisfy
\begin{equation}\label{eq:mod_phi_3}
\lim_{n\to\infty}\frac{\E \eee^{z X^{(3)}_{n,\vartheta n}}}{\eee^{n\varphi_3(z;\vartheta)}}=\sqrt{\frac{\vartheta}{\vartheta L_3(z;\vartheta)+\vartheta-1}},\quad z\in\mathcal{D}_3,
\end{equation}
where
$$
L_3(z;\vartheta):=1/\vartheta+W_0(\vartheta^{-1}(\eee^{-z}-1)\eee^{-1/\vartheta}),\quad z\in \mathcal{D}_3,
$$
and
$$
\varphi_3(z;\vartheta):=(\vartheta-1)\log\vartheta-1+(\vartheta-1)\log L_3(z;\vartheta)+\vartheta(z+L_3(z;\vartheta)).
$$
Moreover, \eqref{eq:mod_phi_3} holds locally uniformly on $\mathcal D_3$ with speed $O(1/n)$.
\end{theorem}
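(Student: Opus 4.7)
The plan is to proceed via the saddle point method, following the same template as Theorems~\ref{thm:mod_phi_stirling1}--\ref{thm:mod_phi_stirling2}. Starting from the integral representation of Lemma~\ref{lem:stirling_3_integral_rep} (which is the analytic counterpart of the identity $\sum_{k=1}^n \stirlingsec{n}{k}(\vartheta n)^{\underline{k}}\eee^{zk}=n![w^n](1+\eee^z(\eee^w-1))^{\vartheta n}$), Cauchy's formula yields
\[
\mathcal{G}^{(3)}_{n,\vartheta n}(\eee^z)
\;=\;
\frac{n!}{(\vartheta n)^n}\cdot\frac{1}{2\pi\iii}\oint
\frac{\eee^{n f(w;z)}}{w}\,\dd w,
\qquad
f(w;z):=\vartheta\log\bigl(1+\eee^z(\eee^w-1)\bigr)-\log w,
\]
with a suitable branch of the power and a small contour encircling $w=0$.

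The saddle point equation $f'(w;z)=0$ reads $\vartheta w\eee^{z+w}=1+\eee^z(\eee^w-1)$; the substitution $w=v+1/\vartheta$ collapses it to $v\eee^v=\vartheta^{-1}(\eee^{-z}-1)\eee^{-1/\vartheta}$, so by definition of $W_0$ the relevant saddle is exactly $w_\ast(z)=L_3(z;\vartheta)$. Inserting the saddle equation back into $f$ eliminates the first logarithm and gives $f(w_\ast)=\vartheta\log\vartheta+(\vartheta-1)\log w_\ast+\vartheta(z+w_\ast)$, while the identity $v=(1-\eee^z)/(\vartheta\eee^{z+w_\ast})=w_\ast-1/\vartheta$ leads after a short differentiation to $f''(w_\ast)=(v+1)/w_\ast^2=(\vartheta L_3+\vartheta-1)/(\vartheta L_3^2)$. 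Combining the Gaussian saddle contribution $\eee^{nf(w_\ast)}/(w_\ast\sqrt{2\pi n f''(w_\ast)})$ with Stirling's expansion $n!/(\vartheta n)^n=\sqrt{2\pi n}\,\eee^{-n(1+\log\vartheta)}(1+O(1/n))$, the two $\sqrt{2\pi n}$ factors cancel, the exponent rearranges into $n\varphi_3(z;\vartheta)$, and the prefactor $(w_\ast\sqrt{f''(w_\ast)})^{-1}$ simplifies to $\sqrt{\vartheta/(\vartheta L_3(z;\vartheta)+\vartheta-1)}=\Psi_3(z)$, as required.

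The main obstacle is turning this heuristic into a uniform estimate on compact subsets of a complex domain $\mathcal{D}_3\supset\R$ with $O(1/n)$ remainder. I would invoke the uniform saddle point scheme of Appendix~\ref{sec:saddle_point}, which calls for three verifications: (i) choosing $\mathcal{D}_3$ so that the argument $\vartheta^{-1}(\eee^{-z}-1)\eee^{-1/\vartheta}$ avoids the branch cut $(-\infty,-1/\eee]$ of $W_0$, so that $L_3(z;\vartheta)$ is analytic and $\Re f''(w_\ast)>0$; (ii) constructing a steepest-descent contour through $w_\ast(z)$ on which $\Re f(\cdot;z)$ attains its maximum only at the saddle and decays controllably away from it; and (iii) deforming the original small-circle contour onto this steepest-descent contour without crossing zeros of $1+\eee^z(\eee^w-1)$, where the $\vartheta n$-th power fails to be single-valued. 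Step~(iii) is the most delicate, as it ties the admissible $\mathcal{D}_3$ directly to the branch-point geometry of the integrand; once it is handled, the Gaussian tail bounds and error estimates of Appendix~\ref{sec:saddle_point} deliver the claimed $O(1/n)$ rate locally uniformly in $z\in\mathcal{D}_3$.
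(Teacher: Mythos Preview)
Your approach is correct and matches the paper's: integral representation from Lemma~\ref{lem:stirling_3_integral_rep}, saddle point $w_\ast=L_3(z;\vartheta)$, the same second-derivative computation, and then Stirling's formula. The algebra you outline all checks out.

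Where you diverge from the paper is in the assessment of steps~(ii)--(iii). You anticipate having to build a steepest-descent contour for complex $z$ and to track zeros of $1+\eee^z(\eee^w-1)$ during the deformation, calling this ``the most delicate'' part. The paper sidesteps this entirely. The point of Theorem~\ref{thm:fedoryuk} (the Fedoryuk uniform saddle-point theorem) is that you only need to verify the saddle-contour conditions (A3) at a \emph{real} base point $z_0$; the theorem then automatically delivers a complex disk $\bD_{\delta(z_0)}(z_0)$ on which the expansion holds uniformly with $O(1/n)$ error, and $\mathcal D_3$ is just the union of these disks over $z_0\in\R$. For real $z_0$ the contour is simply the circle $|w|=\zeta(z_0)$, and the unique-maximum condition (A3)(ii) is immediate from the positive-coefficient trick: the Taylor series of $w\mapsto 1+\eee^{z_0}(\eee^w-1)$ about $0$ has nonnegative coefficients, so $|1+\eee^{z_0}(\eee^w-1)|$ on $|w|=\zeta(z_0)$ is maximized uniquely at the positive real point $w=\zeta(z_0)$. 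No steepest-descent construction or branch-point bookkeeping for complex $z$ is needed.
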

\begin{remark}
Note that the statement of Theorem~\ref{thm:mod_phi_stirling3} holds also for $0< \vartheta <1$ without the assumption that $\vartheta n$ is integer. If $\vartheta<1$ and $n\vartheta$ is not an integer, then the numerator in the left-hand side should be interpreted as $\mathcal{G}_{n,\vartheta n}^{(3)}(\eee^z)$ which is well-defined for all $\vartheta>0$ and $n\in\N$. However, in this case~\eqref{eq:mod_phi_3} does not have a clear probabilistic meaning.
\end{remark}

For $\vartheta=1$ there are significant simplifications in Theorem~\ref{thm:mod_phi_stirling3}. Recall that $X_{n,n}^{(3)}$ is the number of distinct values in a sample of $n$ i.i.d. random variables with the uniform distribution on $\{1,2,\ldots,n\}$.

\begin{corollary}
There is an open subset of $\C$ which contains $\R$ and such that locally uniformly on this set
\begin{equation}\label{eq:mod_phi_3_theta=1}
\lim_{n\to\infty}\frac{\E \eee^{z X^{(3)}_{n,n}}}{\eee^{n\varphi_3(z;1)}}=\sqrt{\frac{1}{1+W_0(\eee^{-1}(\eee^{-z}-1))}},
\end{equation}
with the speed of convergence $O(1/n)$, where
$$
\varphi_3(z;1)=z+W_0(\eee^{-1}(\eee^{-z}-1)).
$$
\end{corollary}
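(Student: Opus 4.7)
The plan is simply to substitute $\vartheta=1$ into Theorem~\ref{thm:mod_phi_stirling3} and carry out the algebraic simplification of the limit and the normalising exponent. Since $n\vartheta=n\in\N$, the random variable $X^{(3)}_{n,n}$ is well-defined and has the probabilistic interpretation recalled before the statement, so there is no issue with the applicability of Theorem~\ref{thm:mod_phi_stirling3}; moreover, the existence of an open set $\mathcal{D}_3\supset\R$ on which the convergence holds locally uniformly with speed $O(1/n)$ is part of that theorem's conclusion.

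First I would evaluate $L_3(z;\vartheta)$ at $\vartheta=1$: the defining formula
$$L_3(z;\vartheta)=\tfrac{1}{\vartheta}+W_0\bigl(\vartheta^{-1}(\eee^{-z}-1)\eee^{-1/\vartheta}\bigr)$$
collapses to $L_3(z;1)=1+W_0(\eee^{-1}(\eee^{-z}-1))$. Next I would substitute this into the right-hand side of~\eqref{eq:mod_phi_3}. The fraction under the square root becomes
$$\frac{\vartheta}{\vartheta L_3(z;\vartheta)+\vartheta-1}\bigg|_{\vartheta=1}=\frac{1}{L_3(z;1)}=\frac{1}{1+W_0(\eee^{-1}(\eee^{-z}-1))},$$
which is exactly the prefactor appearing in~\eqref{eq:mod_phi_3_theta=1}.

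For the exponent, I would specialise
$$\varphi_3(z;\vartheta)=(\vartheta-1)\log\vartheta-1+(\vartheta-1)\log L_3(z;\vartheta)+\vartheta(z+L_3(z;\vartheta)).$$
Both terms carrying the factor $(\vartheta-1)$ vanish at $\vartheta=1$ (the second one assuming $L_3(z;1)$ stays away from $0$ and $\infty$ on $\mathcal D_3$, which it does as $W_0$ is analytic and nonzero on its principal branch), leaving $\varphi_3(z;1)=-1+z+L_3(z;1)=z+W_0(\eee^{-1}(\eee^{-z}-1))$, in agreement with the expression stated in the corollary.

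The only aspect that deserves explicit attention is a consistency check that no branch issue arises at $\vartheta=1$: for $z$ in a neighbourhood of $\R$, the argument $\eee^{-1}(\eee^{-z}-1)$ lies in the domain of analyticity of $W_0$, namely $\C\setminus(-\infty,-1/\eee]$, so $L_3(z;1)$ is analytic and the vanishing of $(\vartheta-1)\log L_3(z;\vartheta)$ is genuinely a pointwise specialisation, not an indeterminate form. Granting this, the corollary follows immediately from Theorem~\ref{thm:mod_phi_stirling3}, inheriting both the local uniformity and the $O(1/n)$ speed of convergence; no further analytic work is required.
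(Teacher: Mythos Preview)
Your proposal is correct and matches the paper's approach: the corollary is stated immediately after Theorem~\ref{thm:mod_phi_stirling3} as the specialisation $\vartheta=1$, with no separate proof given, and your explicit substitution and simplification are exactly what is intended. One very minor quibble: when you justify that $(\vartheta-1)\log L_3(z;\vartheta)$ is not an indeterminate form, the relevant point is that $L_3(z;1)=1+W_0(\cdot)$ stays away from $0$ (i.e., $W_0\neq -1$ since the argument never reaches $-1/\eee$ for $z$ near $\R$), not that $W_0$ itself is nonzero---but this does not affect the validity of your argument.
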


The proofs of the above theorems will be given in Section~\ref{sec:mod-phi-proofs}.

\subsubsection{Corollaries of the mod-$\varphi$ convergence.}

The mod-$\varphi$ convergence obtained above can be used to derive probabilistic limit theorems on the Stirling distributions of the aforementioned three kinds. For simplicity we shall formulate our results in case of $X_{n,\vartheta n}^{(3)}$ only for $\vartheta\geq 1$, in order to avoid problems when $n\vartheta$ is not an integer and $\vartheta\in (0,1)$. The latter case will be discussed in Remark~\ref{rem:x_3_theta<1} at the end of this subsection.

To state these theorems we need to introduce additional notation. First of all, let us record the formulas for the first two derivatives (with respect to $z$ while $\vartheta>0$ stays fixed)
\begin{itemize}
\item of the function $\varphi_1(z;\vartheta)$:
\begin{align*}
\varphi_1'(z;\vartheta)
&=
\vartheta \eee^z L_1'(\vartheta \eee^z)
=
\vartheta \eee^z \log (1+\vartheta^{-1}\eee^{-z}), \\
\varphi_1''(z;\vartheta)
&=
\vartheta \eee^z L_1'(\vartheta \eee^z) +  (\vartheta \eee^z)^2 L_1''(\vartheta \eee^z)
=
\vartheta \eee^z \log (1+\vartheta^{-1}\eee^{-z}) - \frac 1 {1+\vartheta^{-1}\eee^{-z}};
\end{align*}
\item of the function $\varphi_2(z;\vartheta)$:
\begin{align*}
\varphi_2'(z;\vartheta)
&=
\vartheta \eee^z L_2'(\vartheta \eee^z)
=
\frac 1 {W_0(\vartheta^{-1}\eee^{-z})} - \vartheta \eee^z,\\
\varphi_2''(z;\vartheta)
&
=
\vartheta \eee^z L_2'(\vartheta \eee^z) +  (\vartheta \eee^z)^2 L_2''(\vartheta \eee^z)
=\frac 1 {W_0(\vartheta^{-1}\eee^{-z}) (1+W_0(\vartheta^{-1}\eee^{-z}))} - \vartheta \eee^z;
\end{align*}
\item of the function $\varphi_3(z;\vartheta)$:
\begin{align*}
\varphi_3'(z;\vartheta)
&=\vartheta+\frac{\vartheta^2 W_0(\vartheta^{-1}(\eee^{-z}-1)\eee^{-1/\vartheta})}{(\eee^z-1)(1+\vartheta W_0(\vartheta^{-1}(\eee^{-z}-1)\eee^{-1/\vartheta}))},\\
\varphi_3''(z;\vartheta)
&
=\frac{\vartheta^2 W_0(\vartheta^{-1}(\eee^{-z}-1)\eee^{-1/\vartheta})}{(\eee^z-1)^2 (1+\vartheta W_0(\vartheta^{-1}(\eee^{-z}-1)\eee^{-1/\vartheta}))}\\
&\times\left(\frac{1}{(1+\vartheta W_0(\vartheta^{-1}(\eee^{-z}-1)\eee^{-1/\vartheta}))(1+W_0(\vartheta^{-1}(\eee^{-z}-1)\eee^{-1/\vartheta}))}-\eee^z\right).
\end{align*}
\end{itemize}

The last four formulas follow  after some straightforward transformations invoking relation~\eqref{eq:W_0_der} from the Appendix. The following functions $\mu_i(\vartheta)$ and $\sigma_i^2(\vartheta)$ play a role of the asymptotic expectation and variance of the Stirling distribution of type $i\in \{1,2,3\}$ with parameters $(n,\vartheta n)$:
\begin{align*}
\mu_1(\vartheta)
&:=
\varphi_1'(0;\vartheta)
=
\vartheta \log \left(1+\frac 1 \vartheta\right),
\quad
\sigma^2_1(\vartheta):=
\varphi_1''(0;\vartheta)
=
\vartheta \log \left(1+\frac 1 \vartheta\right) - \frac \vartheta{1+\vartheta},\\
\mu_2(\vartheta)
&:=
\varphi_2'(0;\vartheta)
=
\frac 1 {W_0(1/\vartheta)} - \vartheta,\quad
\sigma_2^2(\vartheta):= \varphi_2''(0;\vartheta) = \frac{1}{W_0(\vartheta^{-1})(1+W_0(\vartheta^{-1}))} - \vartheta,\\
\mu_3(\vartheta)
&:=
\varphi_3'(0;\vartheta)
=
\vartheta(1-\eee^{-1/\vartheta}),\quad
\sigma_3^2(\vartheta)
:= \varphi_3''(0;\vartheta) = \vartheta \eee^{-1/\vartheta}\left(1-\eee^{-1/\vartheta}-\frac{1}{\vartheta}\eee^{-1/\vartheta}\right),
\end{align*}
where the last two equations follow from the asymptotic relation $W_0(z)\sim z$, as $z\to 0$.
%$$
%W_0(z)\sim z,\quad z\to 0.
%$$
These functions are  defined for $\vartheta>0$. The graphs of $\mu_i(\vartheta)$ and $\sigma_i(\vartheta)$ are shown on Figure~\ref{fig:function_mu_sigma}. The main properties of these functions are collected in the following lemma.

%%%%%%%%%%%%%%%%%%For figures see "Touchard Lambert Simulations.nb"
\begin{figure}[t]
\begin{center}
\includegraphics[width=0.33\textwidth]{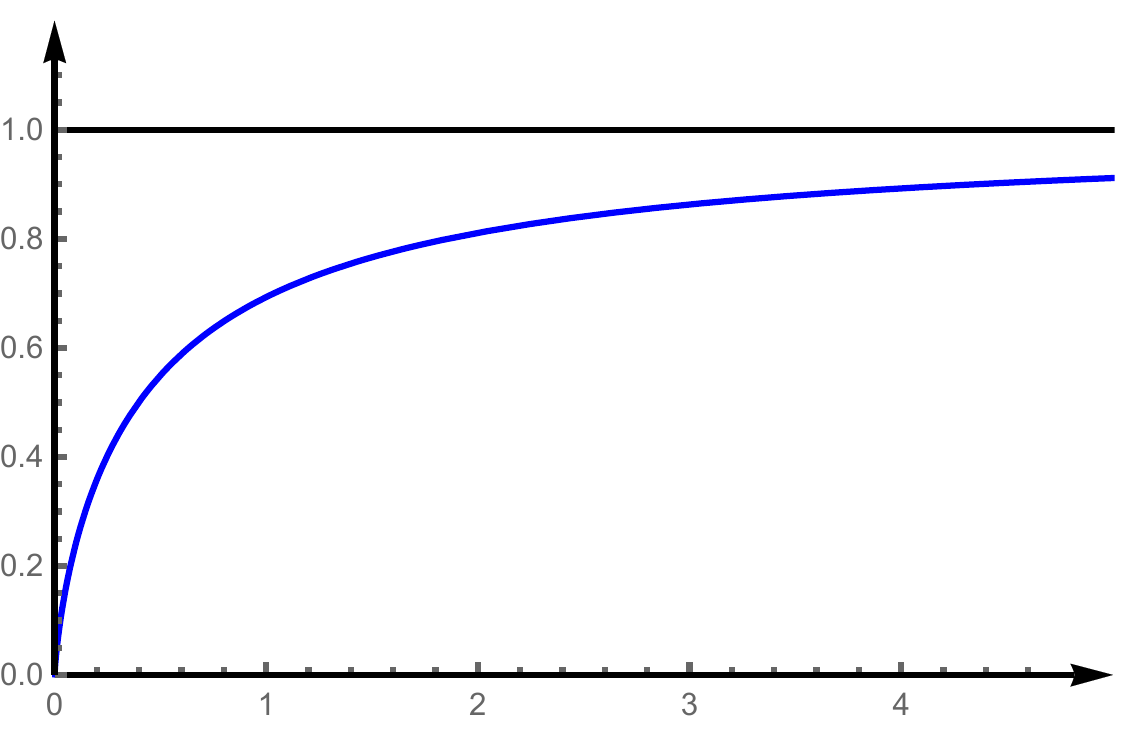}
\includegraphics[width=0.33\textwidth]{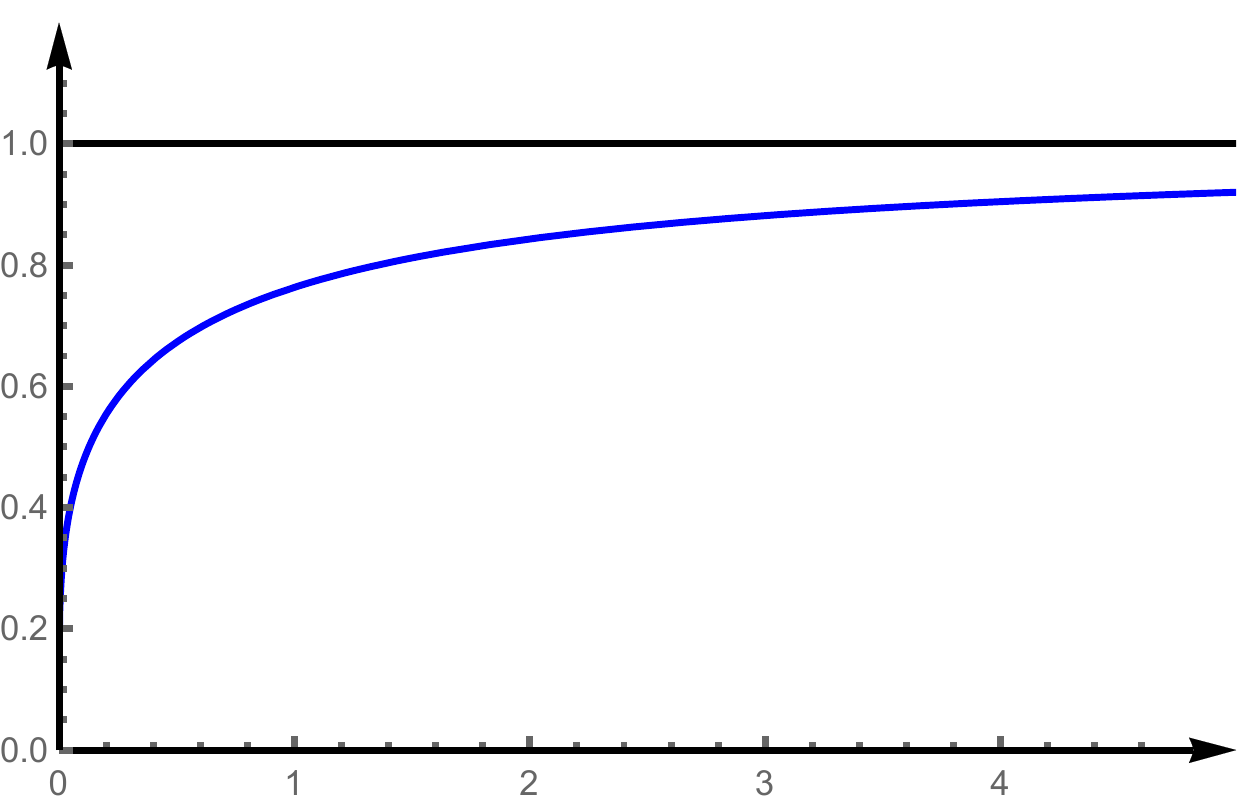}
\includegraphics[width=0.33\textwidth]{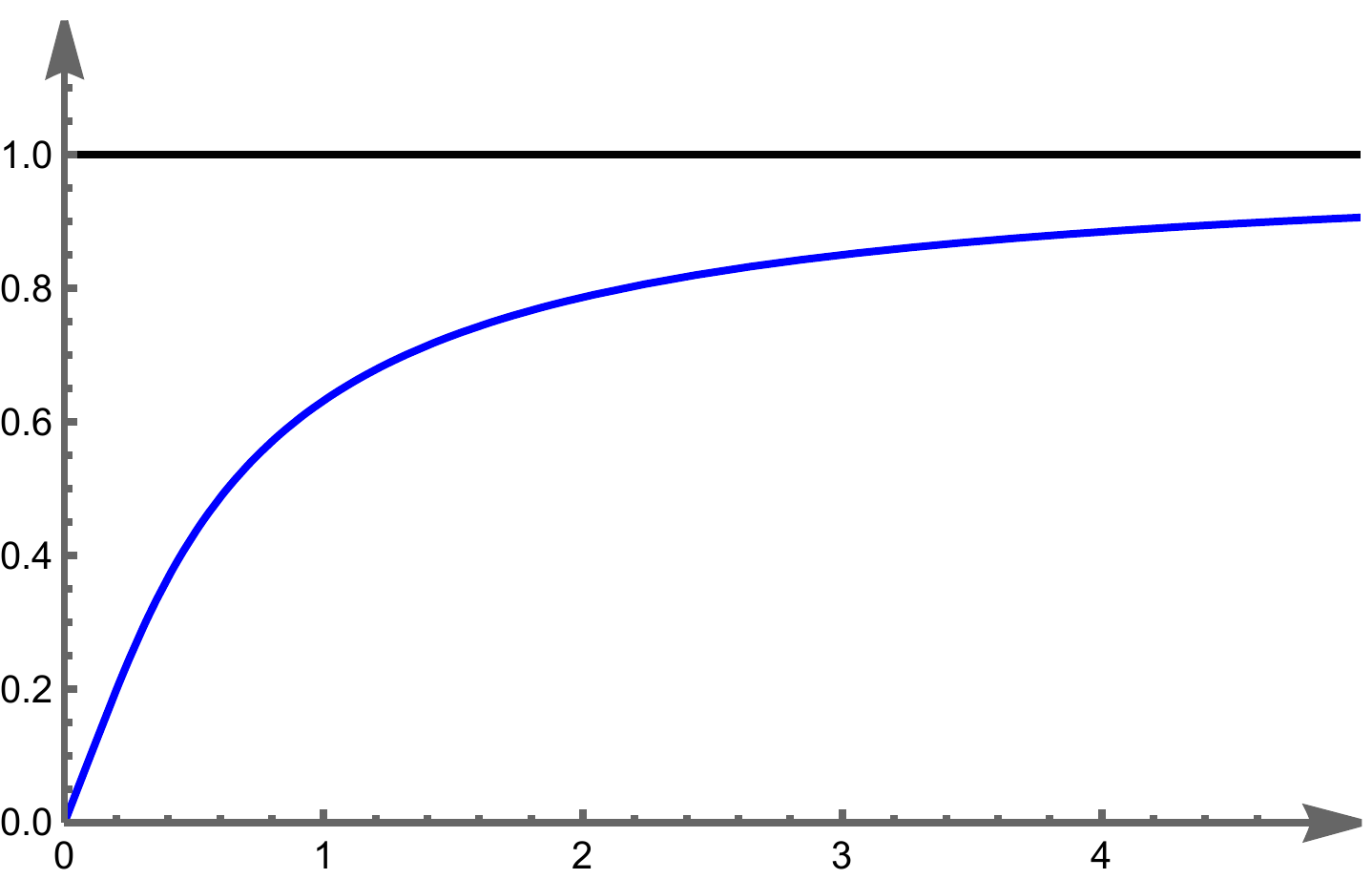}
\includegraphics[width=0.33\textwidth]{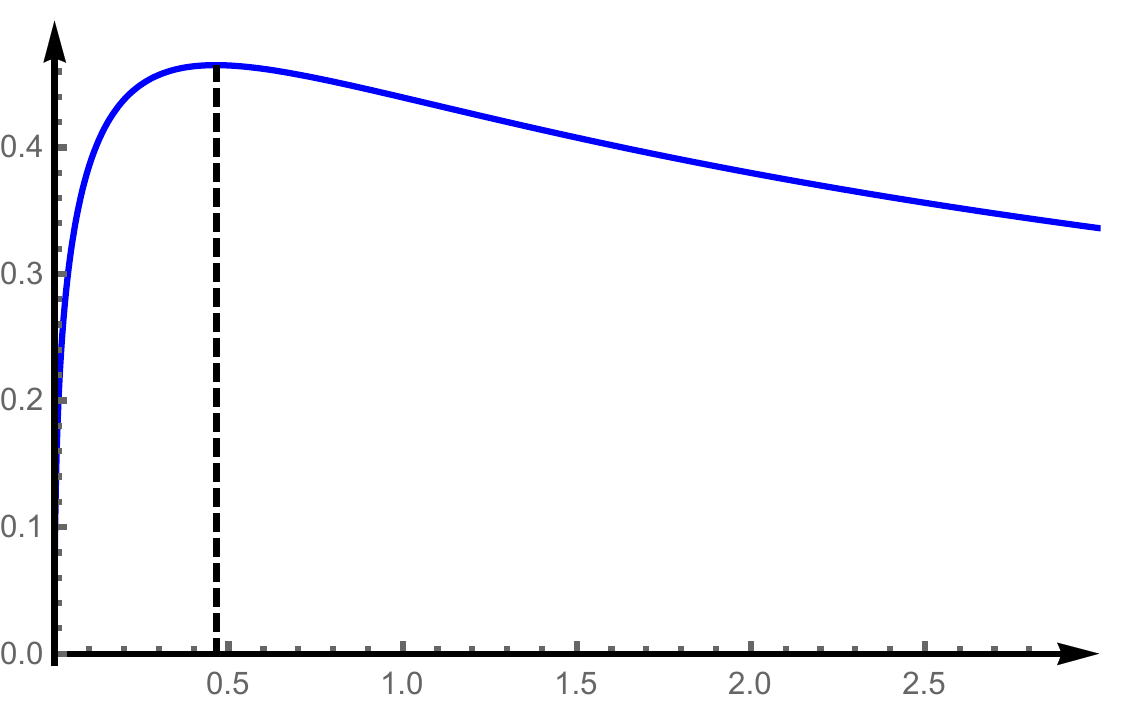}
\includegraphics[width=0.33\textwidth]{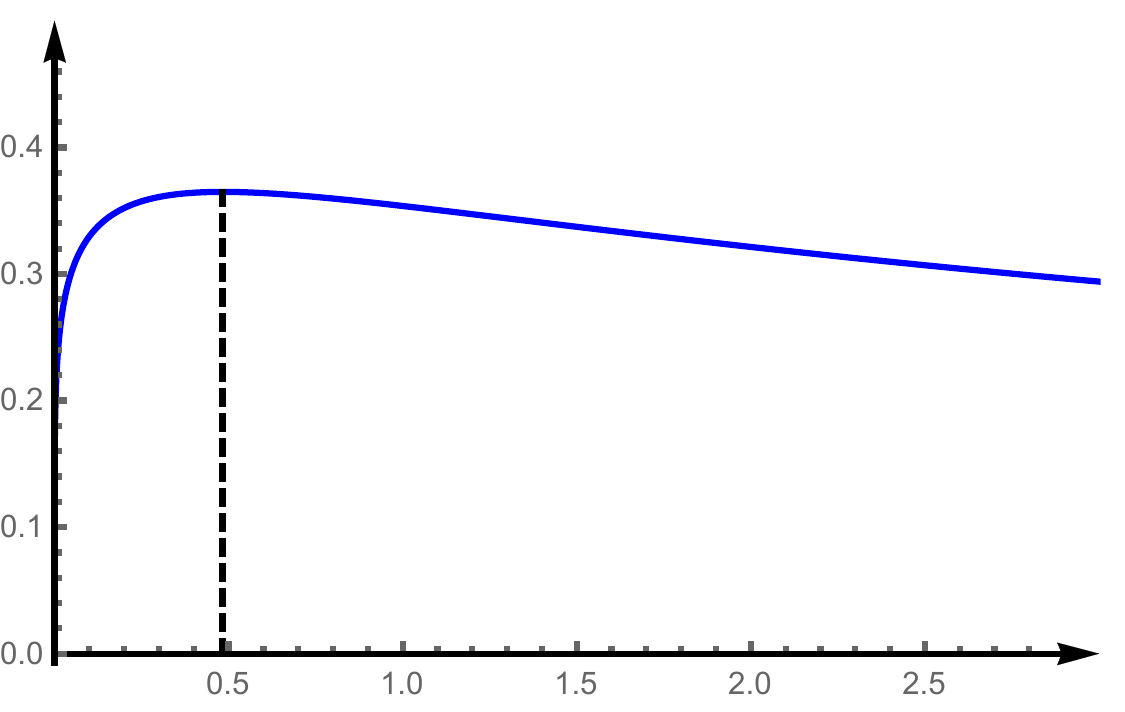}
\includegraphics[width=0.33\textwidth]{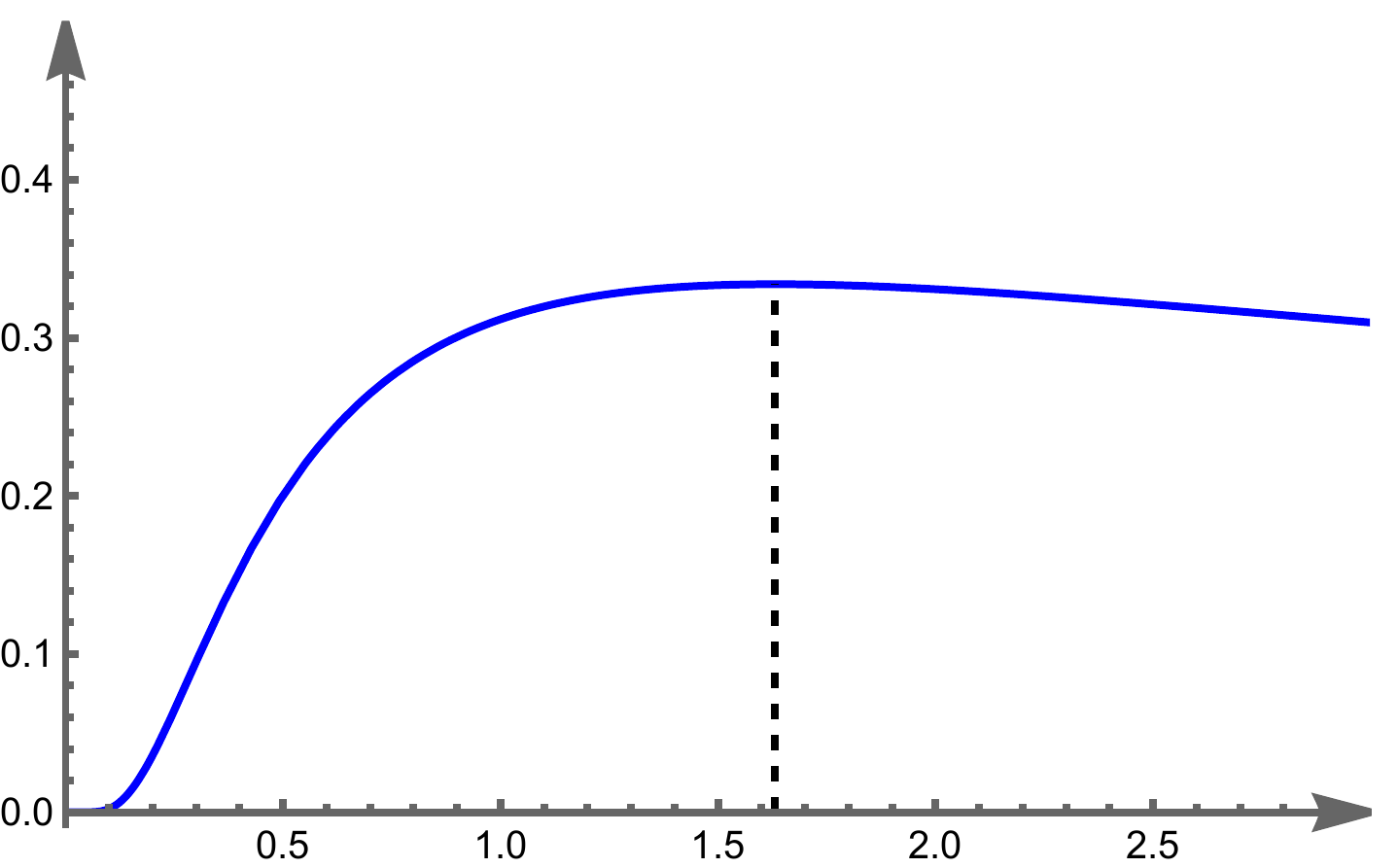}
\end{center}
\caption
{
Top row: The graph of the function $\mu_i(\vartheta)$, $\vartheta>0$.  Bottom row: The graph of the function $\sigma_i(\vartheta)$, $\vartheta>0$. Left column: $i=1$. Middle column: $i=2$. Right column: $i=3$. The dashed vertical line shows the maximum of $\sigma_i(\vartheta)$ which is attained at $\vartheta = \vartheta_i^*$ with $\vartheta_1^* \approx 0.46241$, $\vartheta_2^* \approx 0.48273$ and $\vartheta_2^* \approx 1.6313$.
}
\label{fig:function_mu_sigma}
\end{figure}
\begin{lemma}\label{lem_mu_sigma_properties}
Let $i\in \{1,2,3\}$. The function  $\mu_i(\vartheta)$ is smooth and strictly monotone increasing on $(0,\infty)$. Also,
$$
\lim_{\vartheta\downarrow 0} \mu_i(\vartheta) =  0,
\qquad
\lim_{\vartheta\uparrow +\infty} \mu_i(\vartheta) = 1,
\qquad
\lim_{\vartheta\uparrow +\infty} \sigma_i^2(\vartheta) = \lim_{\vartheta\downarrow 0} \sigma_i^2(\vartheta) = 0,
$$
and  $\sigma_i^2(\vartheta)>0$ for all $\vartheta>0$.
\end{lemma}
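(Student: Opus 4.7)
The plan is to treat each of the three indices $i\in\{1,2,3\}$ separately, reducing every statement to elementary analysis after a suitable change of variables. Smoothness on $(0,\infty)$ is automatic in each case: for $i=1,3$ the functions $\mu_i$ and $\sigma_i^2$ are compositions of $\vartheta\mapsto 1/\vartheta$ with smooth functions defined on $(0,\infty)$, and for $i=2$ smoothness follows from the fact that $W_0$ is real-analytic on $(-1/\mathrm e,+\infty)$ and does not vanish on $(0,\infty)$. So the content of the lemma is monotonicity, the two boundary values of $\mu_i$, and the vanishing/positivity of $\sigma_i^2$.

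For $i=1$ I would work directly with the explicit form $\mu_1(\vartheta)=\vartheta\log(1+1/\vartheta)$. Differentiating gives $\mu_1'(\vartheta)=\log(1+1/\vartheta)-1/(\vartheta+1)$, and crucially $\sigma_1^2(\vartheta)=\vartheta\,\mu_1'(\vartheta)$. The sharp inequality $(1+y)\log(1+y)>y$ for $y>0$ (obtained by observing that the derivative of the left side is $\log(1+y)>0$ and both sides vanish at $y=0$) yields $\log(1+y)>y/(1+y)$, which with $y=1/\vartheta$ gives $\mu_1'(\vartheta)>0$ and simultaneously $\sigma_1^2(\vartheta)>0$. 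The boundary values follow by Taylor expansion of $\log(1+u)$ with $u=1/\vartheta$ near $u=0$ and by the classical limit $u\log(1+1/u)\to 0$ as $u\to 0^+$.

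For $i=2$ the cleanest device is the substitution $w:=W_0(1/\vartheta)$, so that $w>0$, $\vartheta=e^{-w}/w$, and $d\vartheta/dw=-e^{-w}(w+1)/w^2<0$; as $\vartheta$ increases from $0$ to $\infty$, $w$ decreases from $+\infty$ to $0$. Using $w e^w=1/\vartheta$ one obtains the clean identities
\[
\mu_2(\vartheta)=\frac{1-e^{-w}}{w},\qquad \sigma_2^2(\vartheta)=\frac{1-e^{-w}(1+w)}{w(1+w)}.
\]
The universal inequality $e^w>1+w$ for $w>0$ rephrases as $1-e^{-w}(1+w)>0$, giving $\sigma_2^2>0$ and, via $d\mu_2/dw=(e^{-w}(1+w)-1)/w^2<0$ together with $d\vartheta/dw<0$, strict monotonicity of $\mu_2$ in $\vartheta$. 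The four boundary limits now reduce to elementary asymptotics: $w\to\infty$ gives $\mu_2,\sigma_2^2\to 0$ by direct inspection, while $w\to 0^+$ with $e^{-w}=1-w+w^2/2+O(w^3)$ gives $\mu_2\to 1$ and $\sigma_2^2\sim w/2\to 0$.

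For $i=3$ the substitution $s:=1/\vartheta>0$ turns the definitions into $\mu_3=(1-e^{-s})/s$ and $\sigma_3^2=s^{-1}e^{-s}\bigl(1-e^{-s}(1+s)\bigr)$, so the same single inequality $e^w>1+w$ (used once with $w=s$ for positivity of $\sigma_3^2$ and once with $w=s$ for $\mu_3'(\vartheta)=1-e^{-s}(1+s)>0$) simultaneously yields monotonicity and positivity. Expansion of $e^{-s}$ near $s=0$ (i.e.\ $\vartheta\to\infty$) gives $\mu_3\to 1$ and $\sigma_3^2\sim s/2\to 0$, while exponential decay of $e^{-s}$ as $s\to\infty$ (i.e.\ $\vartheta\to 0$) handles the remaining two limits. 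There is no real obstacle in the argument; the only point where some care is required is identifying, in case $i=2$, that one must track signs through the decreasing change of variables $w=W_0(1/\vartheta)$, but this is transparent once the parametrisation is set up.
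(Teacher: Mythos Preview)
Your proof is correct and complete; each case reduces, as you show, to the single elementary inequality $e^w>1+w$ for $w>0$ (equivalently $\log(1+y)>y/(1+y)$), and all boundary limits are handled by straightforward Taylor expansions. The paper itself does not give a proof of this lemma, treating it as an elementary verification, so there is nothing to compare against beyond noting that your argument is exactly the natural one.
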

As a consequence, observe that the functions $\varphi_i(z;\vartheta)$ are strictly convex in $z\in\R$ for $i=1,2$.  Indeed, $\varphi_i'(z;\vartheta) = \mu_i(\vartheta \eee^z)$ is strictly increasing.

The next lemma states that $\mu_i(\vartheta)$ and $\sigma^2_i(\vartheta)$ are the linear growth rates of $\E X_{n,\vartheta n}^{(i)}$ and $\Var X_{n,\vartheta n}^{(i)}$, respectively. This result follows immediately from Theorems~\ref{thm:mod_phi_stirling1}, \ref{thm:mod_phi_stirling2} and \ref{thm:mod_phi_stirling3} since the uniform convergence of analytic functions in a complex neighborhood of the origin implies convergence of all their derivatives evaluated at zero.
\begin{lemma}\label{lem:linear_growth}
For $i\in \{1,2\}$ and all $\vartheta >0$, and for $i=3$ and $\vartheta\geq 1$, we have
$$
\lim_{n\to\infty} \frac 1n \E X_{n,\vartheta n}^{(i)} = \mu_i(\vartheta),
\qquad
\lim_{n\to\infty} \frac 1n \Var X_{n,\vartheta n}^{(i)} = \sigma^2_i(\vartheta).
$$
\end{lemma}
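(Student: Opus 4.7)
The plan is to read off the first two derivatives of the logarithm of the moment generating function at the origin directly from the mod-$\varphi$ statements of Theorems~\ref{thm:mod_phi_stirling1}, \ref{thm:mod_phi_stirling2}, \ref{thm:mod_phi_stirling3}, and then observe that these are precisely the first cumulant (the mean) and the second cumulant (the variance) of $X^{(i)}_{n,\vartheta n}/n$ scaled by $n$. The analytic input is the classical Weierstrass theorem: locally uniform convergence of analytic functions implies locally uniform convergence of their derivatives.

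Fix $i\in\{1,2\}$ and $\vartheta>0$, or $i=3$ and $\vartheta\geq 1$ (so that the Stirling–Sibuya distribution is well-defined). Set $M_n(z):=\E\eee^{z X^{(i)}_{n,\vartheta n}}$. The relevant theorem yields an open set $\mathcal D_i\subset\C$ containing the real axis and a nonvanishing analytic function $\Psi_i$ on $\mathcal D_i$ such that $M_n(z)\eee^{-n\varphi_i(z;\vartheta)}\to\Psi_i(z)$ locally uniformly on $\mathcal D_i$, with speed $O(1/n)$. Since $\Psi_i$ is analytic and nonvanishing in a complex neighborhood $U$ of $0$ contained in $\mathcal D_i$, and since the limit is nowhere zero on $U$, we may pass to logarithms: define the scaled cumulant generating function
\begin{equation*}
\Lambda_n(z):=\frac{1}{n}\log M_n(z),\qquad z\in U,
\end{equation*}
(for all $n$ sufficiently large, so that $M_n$ is close to the nonvanishing limit). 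Then $\Lambda_n(z)=\varphi_i(z;\vartheta)+\tfrac{1}{n}\log\Psi_i(z)+O(1/n^2)$ locally uniformly on $U$, hence $\Lambda_n\to\varphi_i(\cdot;\vartheta)$ locally uniformly on $U$.

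By the Weierstrass theorem on sequences of analytic functions, the derivatives of $\Lambda_n$ at $z=0$ converge to those of $\varphi_i(\cdot;\vartheta)$ at $z=0$. Since $M_n$ is the moment generating function of $X^{(i)}_{n,\vartheta n}$, its logarithm is the cumulant generating function, whose first two derivatives at the origin are the mean and variance. Concretely,
\begin{equation*}
\Lambda_n'(0)=\frac{M_n'(0)}{nM_n(0)}=\frac{1}{n}\E X^{(i)}_{n,\vartheta n},\qquad
\Lambda_n''(0)=\frac{1}{n}\Var X^{(i)}_{n,\vartheta n}.
\end{equation*}
Combining with the definitions $\mu_i(\vartheta)=\varphi_i'(0;\vartheta)$ and $\sigma_i^2(\vartheta)=\varphi_i''(0;\vartheta)$ given just before the lemma, we obtain both asserted limits.

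There is no real obstacle here: the only points to check are that $\Psi_i(0)\neq 0$ (which holds by property $(\Phi 4)$ of the mod-$\varphi$ framework, or directly by inspecting the explicit formulas for $\Psi_i$) and that the neighborhood $U$ can be taken to be a disc around $0$ in $\C$ on which the convergence is uniform (so that Cauchy's integral formula applies to the first and second derivatives). Both are immediate from the statements of Theorems~\ref{thm:mod_phi_stirling1}--\ref{thm:mod_phi_stirling3}.
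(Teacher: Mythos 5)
Your argument is correct and is exactly the route the paper takes: the paper's proof consists of the single observation that locally uniform convergence of analytic functions in a complex neighbourhood of the origin implies convergence of all derivatives at zero, applied to the (scaled) cumulant generating functions. Your write-up merely spells out the same steps (passing to logarithms using $\Psi_i(0)\neq 0$, Weierstrass/Cauchy for the derivatives) in more detail.
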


The next result, which is local limit theorems, can be obtained by an appeal to Theorem 2.7 in~\cite{kabluchko_marynych_sulzbach} applied to deterministic profiles (we use the terminology of~\cite{kabluchko_marynych_sulzbach}) $\mathbb{L}_n^{(i)}(k):=\P[X_{n,\vartheta n}^{(i)}=k]$, $i=1,2,3$. The assumptions (A1) and (A2) of the cited paper hold with $\beta_{-}=-\infty$, $\beta_{+}=\infty$, $w_n=n$, and (A2) is a consequence of the mod-$\varphi$ convergence stated in Theorems~\ref{thm:mod_phi_stirling1},
~\ref{thm:mod_phi_stirling2} and~\ref{thm:mod_phi_stirling3}. The assumption (A3) holds with $r=1$, since the speed of convergence is $O(1/n)=O(1/w_n)$. The assumption (A4) will  be checked below in Section~\ref{sec:local_limit}.

\begin{theorem}\label{theo:local_limit2}
Let $\vartheta>0$. The following hold true:
\begin{align}
&\lim_{n\to\infty}
\sqrt n \sup_{k\in \{0,\ldots,n\}} \left|\frac{\stirling{n}{k} (\vartheta n)^k}{S_n(\vartheta n)}
- \frac{1}{\sqrt{2\pi n} \, \sigma_1(\vartheta)} \exp\left\{-\frac {(k - \mu_1(\vartheta)n)^2}{2\sigma_1^2(\vartheta) n}\right\} \right| = 0,\label{eq:theo:local_limit2_eq1}\\
&\lim_{n\to\infty}
\sqrt n \sup_{k\in \{0,\ldots,n\}} \left|\frac{\stirlingsec{n}{k} (\vartheta n)^k}{T_n(\vartheta n)}
- \frac{1}{\sqrt{2\pi n} \, \sigma_2(\vartheta)} \exp\left\{-\frac {(k - \mu_2(\vartheta)n)^2}{2\sigma_2^2(\vartheta) n}\right\} \right| = 0,\label{eq:theo:local_limit2_eq2}\\
&\lim_{n\to\infty}
\sqrt n \sup_{k\in \{0,\ldots,n\}} \left|\stirlingsec{n}{k}\frac{(\vartheta n)^{\underline{k}}}{(\vartheta n)^n}
- \frac{1}{\sqrt{2\pi n} \, \sigma_3(\vartheta)} \exp\left\{-\frac {(k - \mu_3(\vartheta)n)^2}{2\sigma_3^2(\vartheta) n}\right\} \right| = 0.\label{eq:theo:local_limit2_eq3}
\end{align}
Moreover, the convergence is uniform in $\vartheta$ as long as $\vartheta$ stays in any compact subset of $(0,\infty)$.
\end{theorem}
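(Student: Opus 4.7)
The plan is to invoke Theorem 2.7 of~\cite{kabluchko_marynych_sulzbach} applied to the deterministic profiles $\mathbb{L}_n^{(i)}(k):=\P[X_{n,\vartheta n}^{(i)}=k]$, exactly as the authors indicate in the paragraph preceding the statement. Assumptions (A1) and (A2) of that reference are in place with $\beta_{\pm}=\pm\infty$, $w_n=n$ and limiting cumulant generating function $\varphi_i(z;\vartheta)$, by virtue of \cref{thm:mod_phi_stirling1,thm:mod_phi_stirling2,thm:mod_phi_stirling3}; moreover these theorems give the speed $O(1/n)$, which is precisely (A3) with $r=1$. Accepting these ingredients, Theorem 2.7 of~\cite{kabluchko_marynych_sulzbach} delivers
$$
\sqrt{n}\sup_{k\in\Z}\left|\P[X_{n,\vartheta n}^{(i)}=k]-\frac{1}{\sqrt{2\pi n}\,\sigma_i(\vartheta)}\exp\!\left(-\frac{(k-\mu_i(\vartheta)n)^2}{2\sigma_i^2(\vartheta)n}\right)\right|\xrightarrow[n\to\infty]{}0,
$$
using that $\varphi_i'(0;\vartheta)=\mu_i(\vartheta)$ and $\varphi_i''(0;\vartheta)=\sigma_i^2(\vartheta)>0$ (the positivity being recorded in \cref{lem_mu_sigma_properties}). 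Rewriting $\P[X_{n,\vartheta n}^{(i)}=k]$ via \eqref{eq:def_X_n_theta_no_n}, \eqref{eq:def_X_n_def} and the Stirling--Sibuya weights yields \eqref{eq:theo:local_limit2_eq1}--\eqref{eq:theo:local_limit2_eq3}.

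The real work, as advertised, is the verification of assumption (A4) of~\cite{kabluchko_marynych_sulzbach}. In our setting this boils down to a uniform upper bound on the characteristic function $\mathcal{G}_{n,\vartheta n}^{(i)}(\eee^{\iii t})$ when $t$ is bounded away from $0$ modulo $2\pi$. My plan is to re-run the saddle point analysis already used to prove mod-$\varphi$ convergence, but this time on shifted contours corresponding to purely imaginary perturbations $z=\iii t$ with $t\in[\varepsilon,2\pi-\varepsilon]$, relying on the uniform saddle point estimates collected in \cref{sec:saddle_point} of the Appendix. The quantitative input needed is the strict inequality $\Re\varphi_i(\iii t;\vartheta)<\varphi_i(0;\vartheta)$ for $t\in(0,2\pi)$, which for $i=1,2$ is visible from the explicit formulas involving $L_1$ and $W_0$, and for $i=3$ follows in the same way once the saddle point representation is combined with the integral formula for $\mathcal{G}_{n,\vartheta n}^{(3)}$ promised in Lemma~\ref{lem:stirling_3_integral_rep}. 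Together with the trivial endpoint behaviour at $t=\pm\pi$ and the fact that the generating polynomials have only nonpositive real zeros (for $i=1,2$), this produces an exponentially small bound of the form $|\mathcal{G}_{n,\vartheta n}^{(i)}(\eee^{\iii t})|\leq \eee^{-c(\varepsilon,\vartheta) n}$ on the required arcs, which is much stronger than what (A4) requires.

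Finally, for uniformity in $\vartheta$ over a compact subset $K\subset(0,\infty)$, the point is that every estimate entering the argument is already uniform on such $K$: the mod-$\varphi$ asymptotics in \cref{thm:mod_phi_stirling1,thm:mod_phi_stirling2,thm:mod_phi_stirling3} are obtained from the saddle point method whose error bounds depend continuously on $\vartheta$, the saddle itself stays in a compact subset of the domain of analyticity, and the functions $\mu_i(\vartheta),\sigma_i(\vartheta)$ are continuous and bounded away from $0$ and $\infty$ on $K$ by \cref{lem_mu_sigma_properties}. Hence the constants implicit in (A2)--(A4) can be chosen uniformly in $\vartheta\in K$, so Theorem~2.7 of~\cite{kabluchko_marynych_sulzbach} applies uniformly in $\vartheta$ and yields the claimed uniform convergence. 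I expect the main technical obstacle to be the verification of (A4) for the Stirling--Sibuya case $i=3$, where the generating polynomial $\mathcal{G}_{n,\vartheta n}^{(3)}$ is not a simple Pochhammer-type product, but the integral representation of \cref{lem:stirling_3_integral_rep} together with the uniform saddle point lemmas of the Appendix should precisely bridge that gap.
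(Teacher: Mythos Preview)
Your overall framework matches the paper: both invoke Theorem~2.7 of~\cite{kabluchko_marynych_sulzbach} with (A1)--(A3) supplied by \cref{thm:mod_phi_stirling1,thm:mod_phi_stirling2,thm:mod_phi_stirling3}. The divergence is entirely in the verification of (A4), and here your sketch has a genuine gap.

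The paper does \emph{not} re-run the saddle point method to check (A4). Instead it exploits that each generating polynomial $\mathcal{G}_{n,\vartheta n}^{(i)}$ has only real nonpositive zeros, writes
\[
\frac{|\mathcal{G}_{n,\vartheta n}^{(i)}(\eee^{t+\iii u})|}{\mathcal{G}_{n,\vartheta n}^{(i)}(\eee^{t})}=\prod_k\frac{|\vartheta\eee^{t+\iii u}+z_{k,n}|}{\vartheta\eee^{t}+z_{k,n}},
\]
and bounds each linear factor by $1$, retaining only those factors with $z_{k,n}$ in a fixed interval. The crucial input is that a \emph{positive fraction} of the zeros lies in such an interval: trivial for $i=1$ (the zeros are $0,1/n,\dots,(n-1)/n$), supplied by Elbert's theorem~\eqref{eq:elbert_measures}--\eqref{eq:elbert_limit} for $i=2$, and by Proposition~\ref{prop:convergence_of_zeros_of_x_3} for $i=3$. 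An elementary estimate on each retained factor then gives $(1-\delta)^{cn}$ decay uniformly over $t\in K$ and $u\in[a,\pi]$.

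Your proposed route---extending the saddle point asymptotics to $z=t+\iii u$ with $u\in[a,\pi]$---faces two concrete obstacles. First, the domains $\mathcal{D}_i$ in \cref{thm:mod_phi_stirling1,thm:mod_phi_stirling2,thm:mod_phi_stirling3} are constructed (via \cref{thm:fedoryuk}) as \emph{thin} complex neighbourhoods of the real axis; the unique-maximum condition (A3)(ii) of \cref{sec:saddle_point} is verified only for real $z_0$, and for $u$ bounded away from $0$ the saddle point contour and the verification would have to be redone from scratch (for Touchard polynomials this is essentially Elbert's strong asymptotics, a substantial undertaking). Second, the functions $\varphi_i(z;\vartheta)$ themselves hit branch cuts along the way: for instance $\vartheta\eee^{\iii\pi}\in(-\infty,0]$ lies on the cut of $L_1$, so the inequality $\Re\varphi_1(\iii t;\vartheta)<0$ you invoke is not even well-posed near $t=\pi$. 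Note also that (A4) as stated in~\eqref{eq:A4_condition_general} requires the bound for all real $t$ in a compact set, not just for the characteristic function ($t=0$). The paper's zero-based argument sidesteps all of this and is considerably more elementary.
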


\begin{remark}
Recall that $X_{n,\theta}^{(3)}$ has the same distribution as the number of occupied boxes when $n$ balls are allocated equiprobably  and independently among $\theta$ boxes. The local limit theorem for the latter when $\theta=\theta_n\sim \vartheta \cdot n$ is known, see Theorem 1 on p.~54 in~\cite{Kolchin_book}.
\end{remark}

\begin{figure}[t]
\begin{center}
\includegraphics[width=0.45\textwidth]{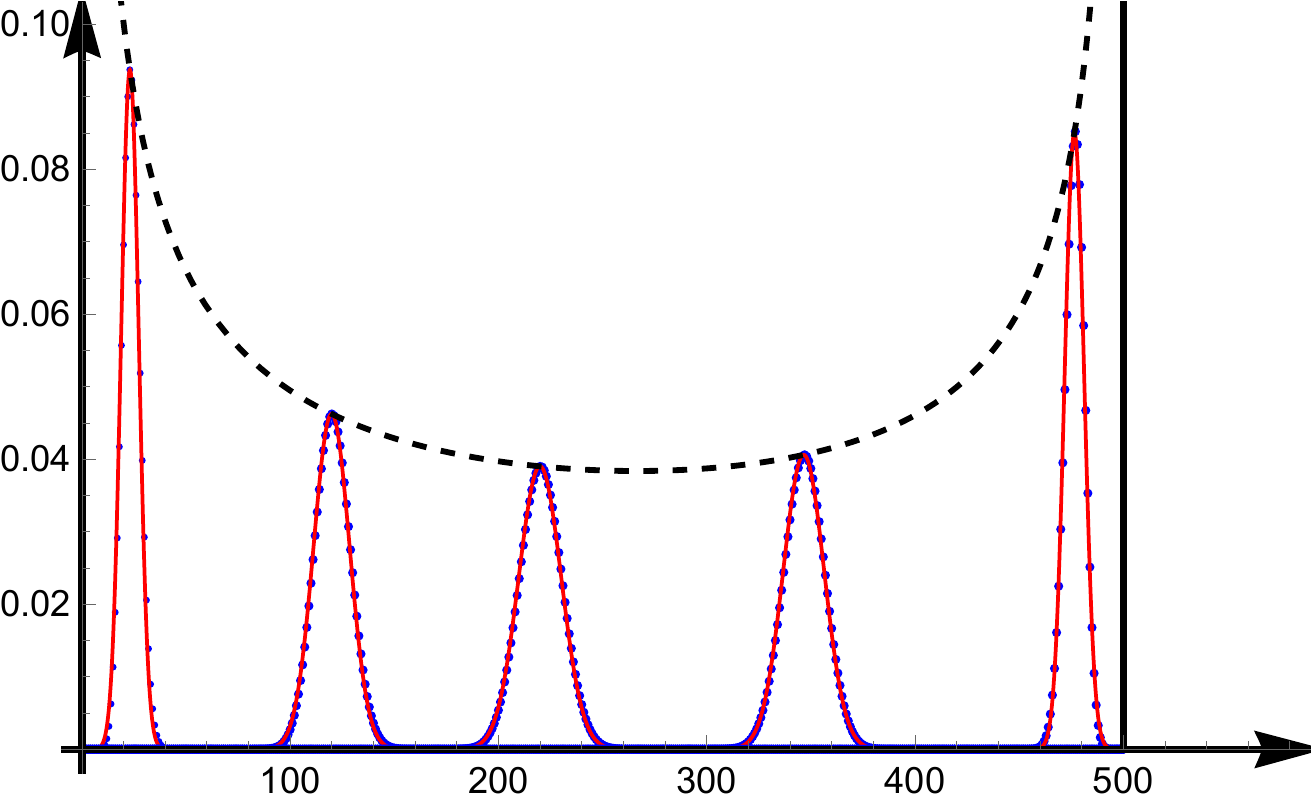}
\includegraphics[width=0.45\textwidth]{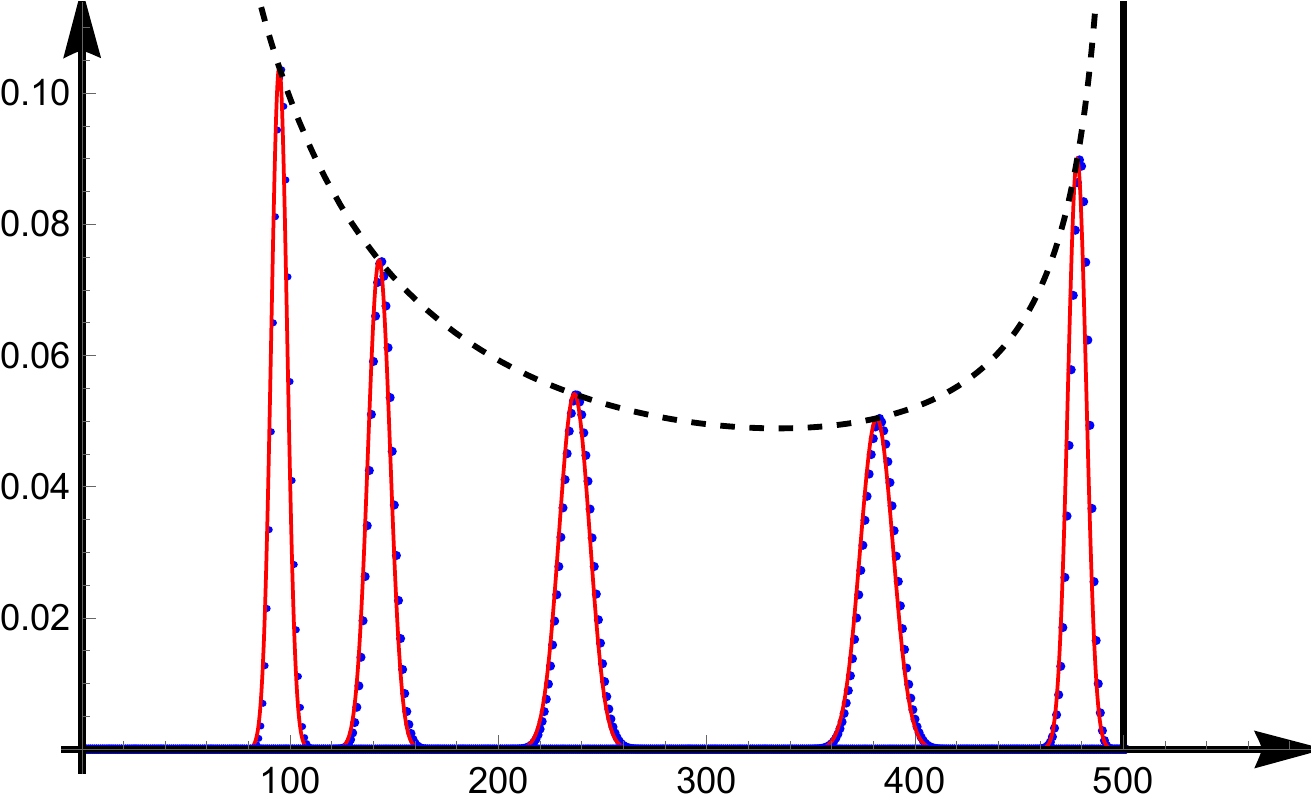}
\includegraphics[width=0.45\textwidth]{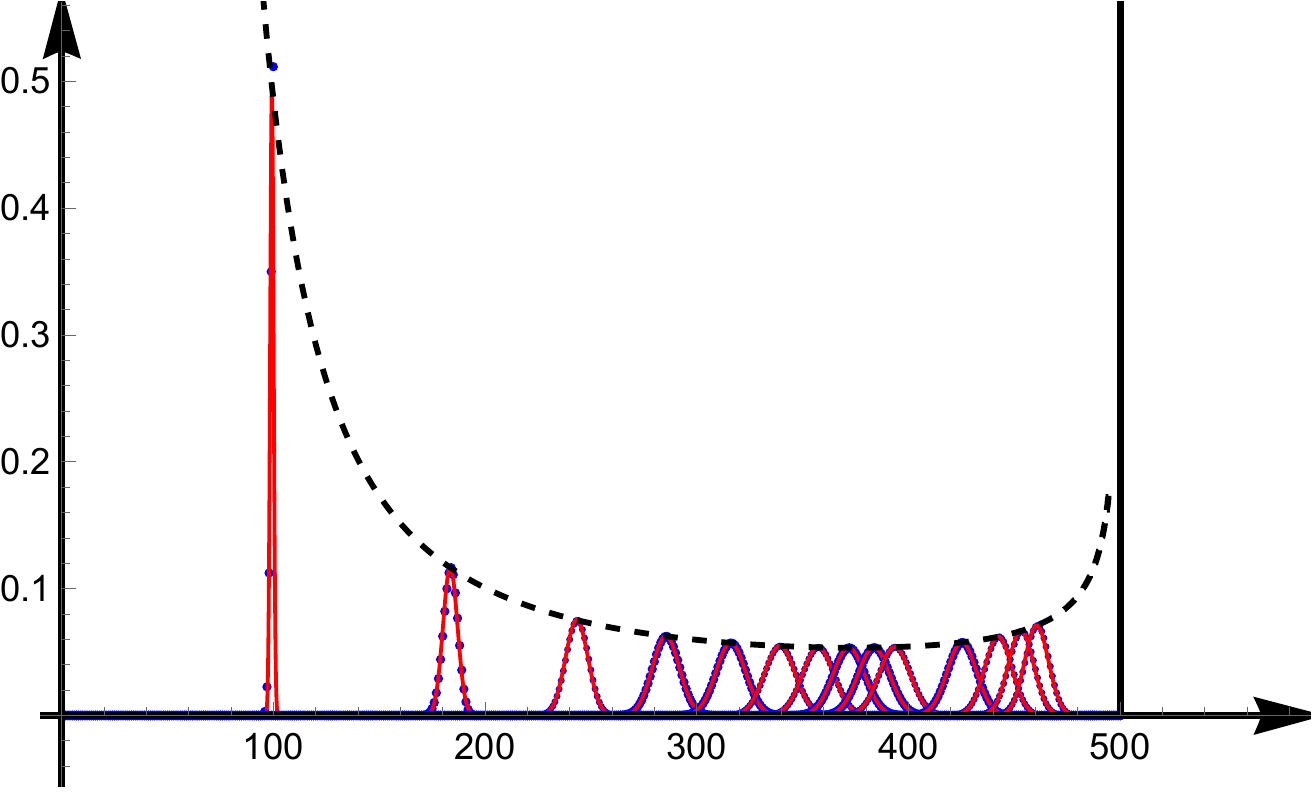}
\end{center}
\caption
{
Local limit theorems for Stirling distributions with parameters $(n,\vartheta n)$ and $n=500$; see Theorem~\ref{theo:local_limit2}. Top left: Stirling distributions of the first kind with $\vartheta\in \{0.01,0.1,0.3,1,10\}$. Top right: Stirling distribution of the second kind with $\vartheta\in \{0.001, 0.01,0.1,1,10\}$. Bottom: Stirling distribution of the third kind with $\vartheta\in \{0.2, 0.4,\ldots,1.8,2,3,4,5,6\}$. Blue disks show the probability mass functions of the corresponding distributions. Red bell-shaped curves are the local limit theorem approximations to these distributions as given by Theorem~\ref{theo:local_limit2}. As the tilting parameter $\vartheta$ moves from $0$ to $+\infty$, the bell-shaped curve moves from left to right.  Dashed black curve is the envelope of the maxima of all bell-shaped-curves.
}
\label{fig:stirlin_tilt_1_2}
\end{figure}

\begin{corollary}\label{cor:clt1}
For $i\in \{1,2\}$ and all $\vartheta >0$, and for $i=3$ and $\vartheta\geq 1$, the following central limit theorems hold:
$$
\frac{X^{(i)}_{n,\vartheta n} - \mu_i(\vartheta) n}{\sigma_i(\vartheta) \sqrt{n}} \todistr \mathcal N(0,1).
$$
\end{corollary}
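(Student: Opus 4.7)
The plan is to derive the central limit theorem directly from the mod-$\varphi$ convergence established in Theorems~\ref{thm:mod_phi_stirling1}--\ref{thm:mod_phi_stirling3} via a standard Laplace transform computation, followed by Curtiss' theorem (equivalently, L\'evy's continuity theorem). Fix $t\in\R$ and $i\in\{1,2,3\}$ and set $z_n:=t/(\sigma_i(\vartheta)\sqrt{n})$, so that $z_n\to 0$ and $z_n$ eventually lies in any prescribed complex neighbourhood of the origin. Since the convergence in each of~\eqref{eq:mod_phi_1},~\eqref{eq:mod_phi_2},~\eqref{eq:mod_phi_3} holds locally uniformly on $\mathcal{D}_i\supset\R$, I can substitute $z=z_n$ to write
$$
\E \eee^{z_n X^{(i)}_{n,\vartheta n}}=\eee^{n\varphi_i(z_n;\vartheta)}\,\Psi_i(z_n)\,(1+O(1/n)).
$$

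First I would record the four elementary normalisations $\varphi_i(0;\vartheta)=0$, $\varphi_i'(0;\vartheta)=\mu_i(\vartheta)$, $\varphi_i''(0;\vartheta)=\sigma_i^2(\vartheta)$ and $\Psi_i(0)=1$. The second and third are exactly the definitions of $\mu_i$ and $\sigma_i^2$ given just before Lemma~\ref{lem_mu_sigma_properties}; the first follows from the explicit formulas for $\varphi_i$ (using in particular $L_3(0;\vartheta)=1/\vartheta$, a consequence of $W_0(0)=0$); and the fourth is a direct substitution into~\eqref{eq:mod_phi_1}--\eqref{eq:mod_phi_3}. Alternatively, once $\varphi_i(0;\vartheta)=0$ is known, $\Psi_i(0)=1$ is forced by the identity $\E \eee^{0\cdot X^{(i)}_{n,\vartheta n}}=1$.

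The main computation is then a second-order Taylor expansion of $\varphi_i(\,\cdot\,;\vartheta)$ at the origin, which is justified by analyticity on a complex neighbourhood of $0$:
$$
n\varphi_i(z_n;\vartheta)=n\mu_i(\vartheta)z_n+\tfrac{1}{2}n\sigma_i^2(\vartheta)z_n^2+O(n|z_n|^3)=\frac{t\sqrt{n}\,\mu_i(\vartheta)}{\sigma_i(\vartheta)}+\frac{t^2}{2}+O(n^{-1/2}).
$$
Combining with continuity of $\Psi_i$ at $0$ and recentring gives
$$
\E\exp\!\left(t\cdot\frac{X^{(i)}_{n,\vartheta n}-\mu_i(\vartheta)n}{\sigma_i(\vartheta)\sqrt{n}}\right)=\eee^{-t\sqrt{n}\mu_i(\vartheta)/\sigma_i(\vartheta)}\,\E\eee^{z_n X^{(i)}_{n,\vartheta n}}\longrightarrow \eee^{t^2/2},
$$
so pointwise convergence of the moment generating functions on a neighbourhood of $0$ yields the asserted CLT.

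I do not anticipate a serious obstacle: because the mod-$\varphi$ statements hold on complex neighbourhoods of all of $\R$ with explicit speed $O(1/n)$, the CLT is a mechanical corollary, which is the standard route used throughout~\cite{feray_meliot_nikeghbali_book}. The only mildly delicate point is the verification $\Psi_i(0)=1$, handled above, and the restriction $\vartheta\geq 1$ when $i=3$ is needed merely to guarantee that $X^{(3)}_{n,\vartheta n}$ is a genuine probability distribution for every $n$.
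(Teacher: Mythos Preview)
Your argument is correct. The Taylor expansion of $\varphi_i$ at $0$, the verification that $\varphi_i(0;\vartheta)=0$ and $\Psi_i(0)=1$, and the appeal to convergence of moment generating functions in a real neighbourhood of the origin are all valid, and the local uniformity of the $O(1/n)$ bound legitimises substituting $z=z_n\to 0$.

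The paper takes a slightly different route: it places Corollary~\ref{cor:clt1} immediately after the local limit theorem (Theorem~\ref{theo:local_limit2}) and treats the CLT as an automatic consequence of that result (a uniform local limit theorem with a Gaussian kernel trivially implies the corresponding integral CLT by summation). That route requires the additional condition~(A4) of~\cite{kabluchko_marynych_sulzbach}, verified separately in Section~\ref{sec:local_limit}. Your approach is more elementary in that it uses only the mod-$\varphi$ convergence itself and bypasses~(A4) entirely; the paper's approach, on the other hand, obtains the CLT for free once the stronger local statement is in hand. Both are standard and short.
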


A standard application of the G\"{a}rtner-Ellis theorem yields the large deviation principle for $(X^{(i)}_{n,\vartheta n}/n)_{n\in\N}$, $i=1,2,3$. The rate functions are just the Legendre-Fenchel transformations of the differentiable functions $\varphi_i$, $i=1,2,3$. Note that, for $i=1,2$, $\varphi^{\prime}_i(z;\vartheta)=\mu_i(\vartheta \eee^z)$. Thus, with $f^{\leftarrow}$ denoting the inverse  of a function $f$,
$$
(\varphi^{\prime}_i)^{\leftarrow}(t)=\log \mu_i^{\leftarrow}(t)-\log\vartheta,\quad i=1,2,\quad t\in (0,1).
$$
Simple calculations show that
\begin{equation}\label{eq:LDP_inverses}
\mu_1^{\leftarrow}(t)=-\frac{t}{t+W_{-1}(-t\eee^{-t})},\quad \mu_2^{\leftarrow}(t)=\frac{t}{1+tW_0(-t^{-1}\eee^{-1/t})}-t,
\quad
t\in (0,1),
\end{equation}
see Section~\ref{appendix_Lambert} for the definition of $W_{-1}$ which is also a branch of the Lambert $W$-function.
\begin{corollary}\label{cor:LDP}
Fix $\vartheta>0$. The sequences of random variables $(X^{(1)}_{n,\vartheta n}/n)_{n\in\N}$ and $(X^{(2)}_{n,\vartheta n}/n)_{n\in\N}$ satisfy large deviations principles with speed $n$ and with the following rate functions:
$$
I_i(t;\vartheta) =
\begin{cases}
t \log \mu_i^{\leftarrow}(t) - L_i(\mu_i^{\leftarrow}(t)) - (t \log \vartheta - L_i(\vartheta)),
&\text{ if } t\in (0,1),\\
L_1(\theta),
&\text{ if } t=0, i=1,\\
+\infty,
&\text{ if } t=0, i=2,\\
L_1(\vartheta) - 1 - \log \vartheta,
&\text{ if } t=1, i\in \{1,2\},\\
%????,
%&\text{ if } t=1, i=2,\\
+\infty, &\text{ if } t\in \R\setminus [0,1],
\end{cases}
$$
for $i\in \{1,2\}$, where $\mu_i^{\leftarrow}$ are given explicitly by~\eqref{eq:LDP_inverses}.
\end{corollary}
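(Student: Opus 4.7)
The plan is to apply the G\"artner--Ellis theorem to $Y_n := X_{n,\vartheta n}^{(i)}/n$ and then compute the resulting Legendre--Fenchel transform explicitly. By Theorems~\ref{thm:mod_phi_stirling1} and~\ref{thm:mod_phi_stirling2}, locally uniformly on a complex neighbourhood of $\R$ we have
\[
\frac{1}{n}\log \E\eee^{n z Y_n} = \varphi_i(z;\vartheta) + \frac{1}{n}\log \Psi_i(z) + O(1/n^2),
\]
and since $\Psi_i$ is analytic and nonvanishing on $\R$, the logarithmic moment generating function $\Lambda_i(z;\vartheta):=\lim_{n\to\infty}n^{-1}\log\E\eee^{nzY_n}$ exists and equals $\varphi_i(z;\vartheta)$ for every $z\in\R$. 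The function $\varphi_i(\cdot;\vartheta)$ is real analytic and strictly convex on $\R$ (this was noted immediately after Lemma~\ref{lem_mu_sigma_properties}), hence it is essentially smooth, so G\"artner--Ellis applies and yields the LDP with rate function $I_i(t;\vartheta)=\sup_{z\in\R}\bigl(zt-\varphi_i(z;\vartheta)\bigr)$.

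For $t\in (0,1)$ I would determine the optimiser by solving $\varphi_i'(z;\vartheta)=t$. Since $\varphi_i'(z;\vartheta)=\mu_i(\vartheta\eee^z)$ and, by Lemma~\ref{lem_mu_sigma_properties}, $\mu_i\colon(0,\infty)\to(0,1)$ is a smooth bijection, the unique solution is $z^*(t)=\log\mu_i^{\leftarrow}(t)-\log\vartheta$. Substituting $\varphi_i(z^*;\vartheta)=L_i(\mu_i^{\leftarrow}(t))-L_i(\vartheta)$ into $z^*t-\varphi_i(z^*;\vartheta)$ yields exactly the stated formula. To obtain the explicit inverses~\eqref{eq:LDP_inverses} I would transform each defining equation into Lambert form: for $i=1$, write $\mu_1(x)=t$ as $(1+1/x)=\eee^{t/x}$ and set $y=1+1/x$ to obtain $-ty\,\eee^{-ty}=-t\,\eee^{-t}$; the nontrivial solution corresponds to $-ty\leq -1$, giving $-ty=W_{-1}(-t\eee^{-t})$ and hence $x=-t/(t+W_{-1}(-t\eee^{-t}))$. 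For $i=2$, setting $u=W_0(1/x)$ reduces $\mu_2(x)=t$ to $(1-tu)\eee^u=1$; the substitution $w=-(1+s)/t$ with $s=-tu$ leads to $w\eee^w=-\eee^{-1/t}/t$ with $w\in(-1,0)$, which selects the principal branch and gives $u=1/t+W_0(-t^{-1}\eee^{-1/t})$ and thereby the stated expression for $\mu_2^{\leftarrow}(t)$.

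The boundary cases require separate treatment. The values $t\notin[0,1]$ are handled at once by the support constraint $X_{n,\vartheta n}^{(i)}/n\in[1/n,1]$, which forces $I_i(t;\vartheta)=+\infty$ there. At $t=1$ and $t=0$ the supremum is attained in the limit $z\to\pm\infty$; these limits must be computed using the asymptotics of $L_i$ at $\infty$ and at $0^+$. Concretely, using $L_1(w)=1+\log w+O(1/w)$ as $w\to\infty$ and the analogous expansion $L_2(w)=1+\log w+O(1/w)$ obtained from $W_0(1/w)\sim 1/w$, one finds $\lim_{z\to\infty}(z-\varphi_i(z;\vartheta))=L_i(\vartheta)-1-\log\vartheta$ for $i=1,2$. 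At $t=0$ one lets $z\to-\infty$: for $i=1$, $L_1(\vartheta\eee^z)\to L_1(0^+)=0$, producing $I_1(0;\vartheta)=L_1(\vartheta)$; for $i=2$, the divergence $L_2(\vartheta\eee^z)\to -\infty$ (caused by $W_0(1/x)\sim\log(1/x)$ as $x\to 0^+$) forces $I_2(0;\vartheta)=+\infty$.

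The main technical point, and the step I expect to be the most delicate, is the Lambert inversion in the $i=2$ case, where one has to pick the correct branch of $W$; all other steps are routine consequences of mod-$\varphi$ convergence, the smoothness/strict convexity of $\varphi_i$, and elementary asymptotics of $L_1$ and $L_2$.
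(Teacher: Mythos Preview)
Your approach is exactly the one the paper has in mind: apply the G\"artner--Ellis theorem using the mod-$\varphi$ limit of $n^{-1}\log\E\eee^{nzY_n}=\varphi_i(z;\vartheta)$, then compute the Legendre--Fenchel transform via the optimality equation $\varphi_i'(z;\vartheta)=\mu_i(\vartheta\eee^{z})=t$, solved by $z^*=\log\mu_i^{\leftarrow}(t)-\log\vartheta$. The paper only sketches this in the paragraph preceding the corollary and leaves the boundary values and the Lambert inversions as ``simple calculations'', so your proposal is strictly more detailed than the paper's own treatment; the Lambert-branch arguments you outline for~\eqref{eq:LDP_inverses} are the correct ones.

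One small point: at $t=1$ you obtain $I_i(1;\vartheta)=L_i(\vartheta)-1-\log\vartheta$, which is the result your asymptotics $L_i(w)=1+\log w+O(1/w)$ genuinely gives. The statement in the paper records $L_1(\vartheta)-1-\log\vartheta$ for both $i\in\{1,2\}$; this appears to be a typographical slip (as does the ``$\theta$'' in the $t=0$, $i=1$ line), and your computation is the correct one.
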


Large deviations for $X_{n,\vartheta n}^{(1)}$ have been obtained in~\cite[Theorem~4.4]{feng_ewens}. A slightly more involved calculation leads to large deviations estimates for $X_{n,\vartheta n}^{(3)}$.

%Note that if $\vartheta\geq 1$, then $X_{n,\vartheta n}^{(3)}$ is concentrated on $\{1,2,\ldots,n\}$, whereas if $\vartheta<1$, then $X_{n,\vartheta n}^{(3)}$ is concentrated on $\{1,2,\ldots,n\vartheta\}$. Thus, the shape of the rate function is different for $\vartheta\geq 1$ and $\vartheta<1$.

\begin{corollary}\label{cor:LDP_X_3}
Fix $\vartheta\geq 1$. The sequence of random variables $(X^{(3)}_{n,\vartheta n}/n)_{n\in\N}$ satisfy large deviations principles with speed $n$ and with the following rate function:
$$
I_3(t;\vartheta) =
\begin{cases}
(t-1)\log \mu_2^{\leftarrow}(t) - \frac{1}{t+\mu_2^{\leftarrow}(t)}+(\vartheta-t)\log (\vartheta-t)+1-(\vartheta-1)\log\vartheta,
&\text{ if } t\in (0,1),\\
(\vartheta-1)\log(\vartheta-1)+1+(\vartheta-1)\log\vartheta, &\text{ if } t=1,\\
+\infty, &\text{ if } t\notin (0,1].
\end{cases}
$$
\end{corollary}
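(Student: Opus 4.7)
The plan is to mimic the argument behind Corollary~\ref{cor:LDP}: invoke the G\"artner--Ellis theorem (with the log-MGF limit supplied by Theorem~\ref{thm:mod_phi_stirling3}) and then compute the Legendre--Fenchel transform of $\varphi_3(\cdot;\vartheta)$ in closed form. Theorem~\ref{thm:mod_phi_stirling3} yields, for $\vartheta\ge 1$, that
$$
\frac{1}{n}\log \E \eee^{z X^{(3)}_{n,\vartheta n}} \;\longrightarrow\; \varphi_3(z;\vartheta)
$$
locally uniformly in $z\in\mathcal{D}_3\cap\R$; for $\vartheta\ge 1$ the argument $\vartheta^{-1}(\eee^{-z}-1)\eee^{-1/\vartheta}$ of $W_0$ stays strictly above $-1/\eee$ for every $z\in\R$, so the effective domain of $\varphi_3(\cdot;\vartheta)$ is all of $\R$, and essential smoothness is automatic. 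Convexity follows from convexity of $z\mapsto \log \E \eee^{zX^{(3)}_{n,\vartheta n}}$ for every $n$. Hence G\"artner--Ellis applies and gives a LDP at speed $n$ with rate $I_3(t;\vartheta)=\sup_{z\in\R}(tz-\varphi_3(z;\vartheta))$. Since $X^{(3)}_{n,\vartheta n}/n\in[0,1]$ for $\vartheta\ge 1$, we have $I_3(t;\vartheta)=+\infty$ for $t\notin[0,1]$.

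The heart of the proof is the evaluation of this supremum for $t\in(0,1)$. Using the formula for $\varphi_3'(z;\vartheta)$ already recorded in the paper, and writing $W := W_0(\vartheta^{-1}(\eee^{-z}-1)\eee^{-1/\vartheta})$ so that $L_3(z;\vartheta)=1/\vartheta+W$ and $\vartheta W\eee^{W+1/\vartheta}=\eee^{-z}-1$, the critical-point equation $\varphi_3'(z;\vartheta)=t$ can be rewritten as
$$
\eee^{W+1/\vartheta}\bigl[(\vartheta-t)-t\vartheta W\bigr]=\vartheta.
$$
The key algebraic observation is that this equation admits a very clean parametrization through $s:=\mu_2^{\leftarrow}(t)$: one takes $W^*=1/(t+s)-1/\vartheta$, i.e. $L_3(z^*;\vartheta)=1/(t+s)$. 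Indeed $s=\mu_2^{\leftarrow}(t)$ is characterised by $W_0(1/s)=1/(t+s)$, which through $W_0(1/s)\eee^{W_0(1/s)}=1/s$ is equivalent to the identity
$$
\eee^{1/(t+s)}=(t+s)/s.
$$
Plugging $W=W^*$ into the critical-point equation and using this identity shows both that the equation is satisfied and that $\eee^{z^*}=s/(\vartheta-t)$.

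With $z^*=\log s-\log(\vartheta-t)$ and $L_3(z^*;\vartheta)=1/(t+s)$ in hand, substitution into
$$
I_3(t;\vartheta)=tz^*-(\vartheta-1)\log\vartheta+1-(\vartheta-1)\log L_3(z^*;\vartheta)-\vartheta(z^*+L_3(z^*;\vartheta))
$$
produces the expression
$$
(t-\vartheta)\log s+(\vartheta-t)\log(\vartheta-t)+(\vartheta-1)\log(t+s)-(\vartheta-1)\log\vartheta+1-\vartheta/(t+s),
$$
which collapses to the formula stated in the corollary after using $\log((t+s)/s)=1/(t+s)$ to replace $(\vartheta-1)\log(t+s)-(\vartheta-1)\log s-(\vartheta-1)/(t+s)$ by $0$. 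The value at the boundary $t=1$ is obtained by letting $s=\mu_2^{\leftarrow}(t)\to\infty$ (since $\mu_2(s)\to 1$ as $s\to\infty$), making the $\log s$ and $1/(t+s)$ contributions vanish; the blow-up $I_3(t;\vartheta)\to+\infty$ as $t\downarrow 0$ can be verified directly from the asymptotics $\mu_2^{\leftarrow}(t)\sim t\eee^{-1/t}$ as $t\downarrow 0$, consistent with $I_3(0;\vartheta)=+\infty$.

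The main obstacle is locating the substitution $L_3(z^*;\vartheta)=1/(t+\mu_2^{\leftarrow}(t))$ that decouples the critical-point equation and makes the $W_0$-dependence disappear; once this parametrization is adopted, every subsequent simplification is driven by the single identity $\eee^{1/(t+s)}=(t+s)/s$, so the remaining work is bookkeeping rather than analysis.
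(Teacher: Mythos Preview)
Your proposal is correct and follows exactly the approach the paper indicates (``A standard application of the G\"artner--Ellis theorem\ldots The rate functions are just the Legendre--Fenchel transformations of the differentiable functions $\varphi_i$''); the paper gives no further details for $i=3$, so you are in fact supplying the ``slightly more involved calculation'' it omits. Your key substitution $L_3(z^*;\vartheta)=1/(t+\mu_2^\leftarrow(t))$ and the reduction via the single identity $\eee^{1/(t+s)}=(t+s)/s$ are checked line by line and are correct.

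One remark: your limit at $t=1$ yields $(\vartheta-1)\log(\vartheta-1)+1-(\vartheta-1)\log\vartheta$, with a \emph{minus} sign before the last term, not the plus sign printed in the statement. Your value is the right one: a direct Stirling computation of $\P[X^{(3)}_{n,\vartheta n}=n]=(\vartheta n)^{\underline{n}}/(\vartheta n)^n$ gives exponential rate $1+(\vartheta-1)\log(1-1/\vartheta)=1-(\vartheta-1)\log\vartheta+(\vartheta-1)\log(\vartheta-1)$, so the discrepancy is a typo in the paper rather than an error in your argument.
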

\begin{remark}\label{rem:x_3_theta<1}
The claims of Lemma~\ref{lem:linear_growth}, Corollary~\ref{cor:clt1} and Corollary~\ref{cor:LDP_X_3} remain true also for $i=3$ and $\vartheta\in (0,1)$ upon appropriate interpretation in case when $n\vartheta$ is not an integer. This is a consequence of the fact that Theorem~\ref{thm:mod_phi_stirling3} holds true for all $\vartheta>0$. More precisely, if $\vartheta\in (0,1)$ and $n\vartheta$ is not an integer, then $\E X_{n,\vartheta n}^{(3)}$ and $\Var X_{n,\vartheta n}^{(3)}$ in Lemma~\ref{lem:linear_growth} should be interpreted as the first two derivatives of $z\mapsto \log \mathcal{G}_{n,\vartheta n}^{(3)}(\eee^z)$ at $z=0$.
%$\left(\mathcal{G}_{n,\vartheta n}^{(3)}\right)^{\prime}(1)$
%and
%$\left(\mathcal{G}_{n,\vartheta n}^{(3)}\right)^{\prime\prime}(1)+\left(\mathcal{G}_{n,\vartheta %n}^{(3)}\right)^{\prime}(1)-\left(\left(\mathcal{G}_{n,\vartheta n}^{(3)}\right)^{\prime}(1)\right)^2$,
%respectively.
The central limit theorem in Corollary~\ref{cor:clt1} is understood as the convergence of properly rescaled Laplace transforms to the Laplace transform of the standard normal law in a small neighborhood of the origin. The rate function in Corollary~\ref{cor:LDP_X_3}, for $\vartheta\in (0,1)$, takes the form
$$
I_3(t;\vartheta) =
\begin{cases}
(t-1)\log \mu_2^{\leftarrow}(t) - \frac{1}{t+\mu_2^{\leftarrow}(t)}+(\vartheta-t)\log (\vartheta-t)+1-(\vartheta-1)\log\vartheta,
&\text{ if } t\in (0,\vartheta),\\
(\vartheta-1)\log \mu_2^{\leftarrow}(\vartheta) - \frac{1}{\vartheta+\mu_2^{\leftarrow}(\vartheta)}+1-(\vartheta-1)\log\vartheta, &\text{ if } t=\vartheta.\\
%+\infty, &\text{ if } t\notin (0,\vartheta].\\
\end{cases}
$$
\end{remark}

\begin{figure}[t]
\begin{center}
\includegraphics[width=0.25\textwidth ]{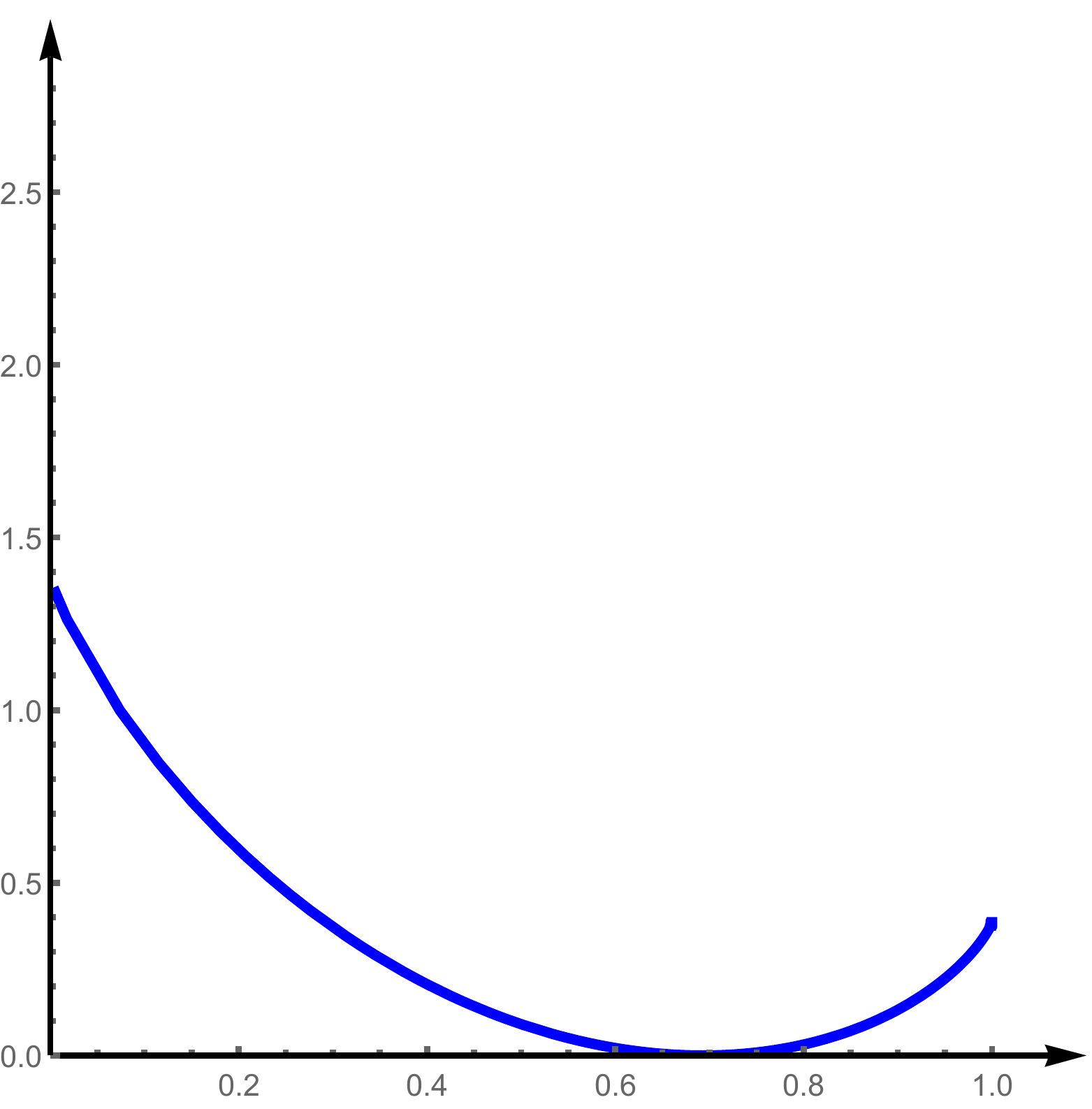}
\includegraphics[width=0.25\textwidth ]{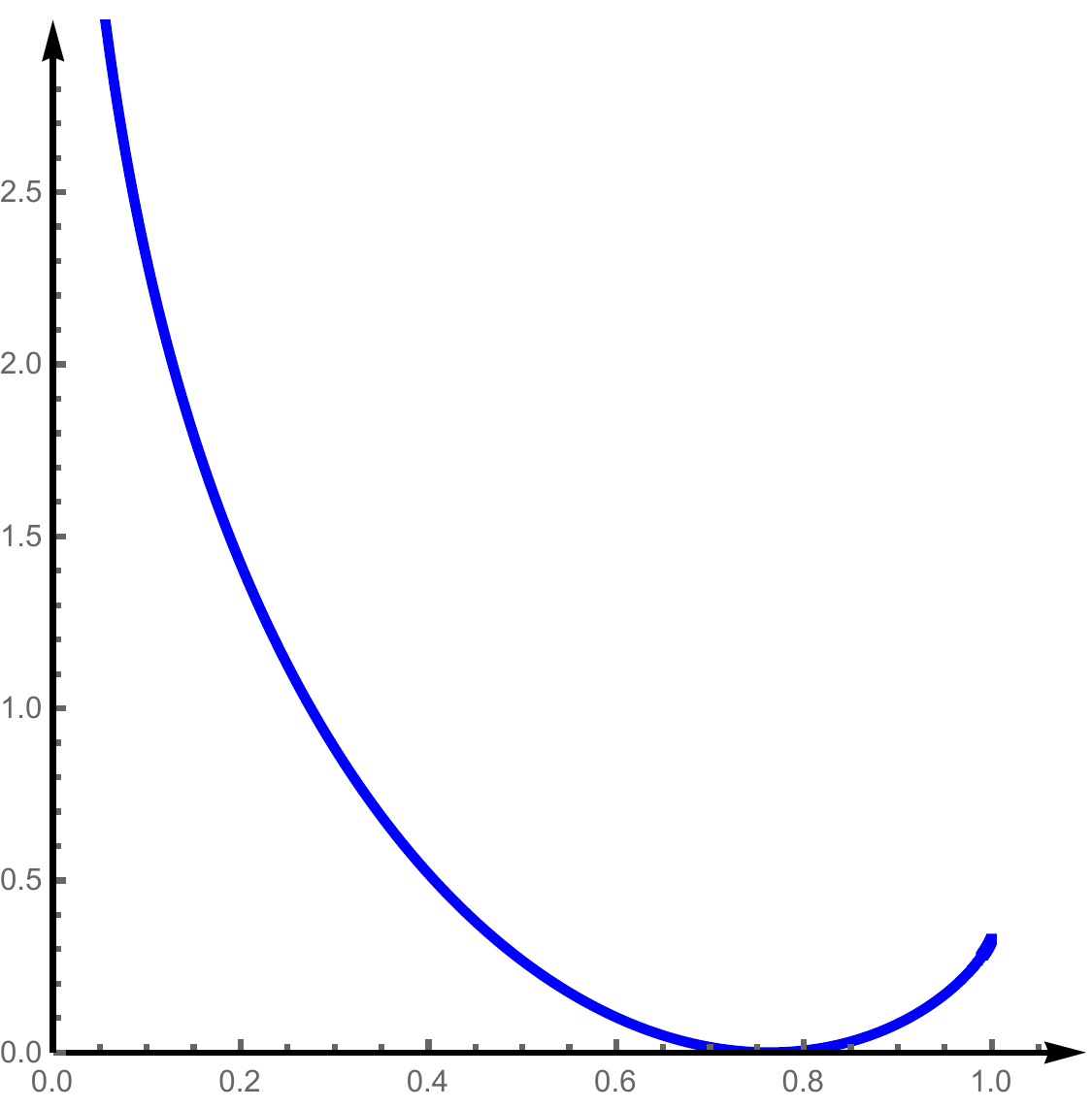}\\
\includegraphics[width=0.25\textwidth ]{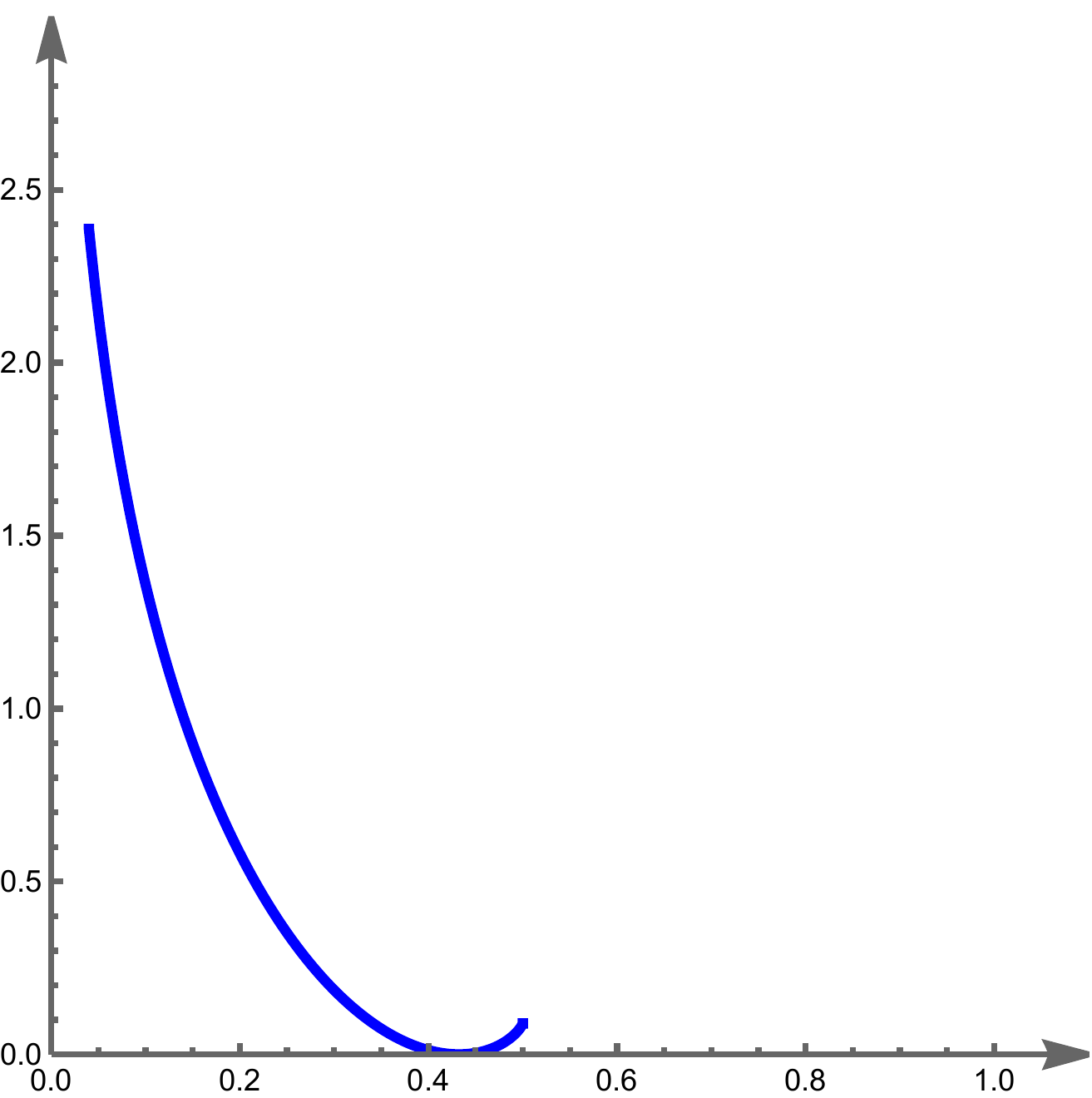}
\includegraphics[width=0.25\textwidth ]{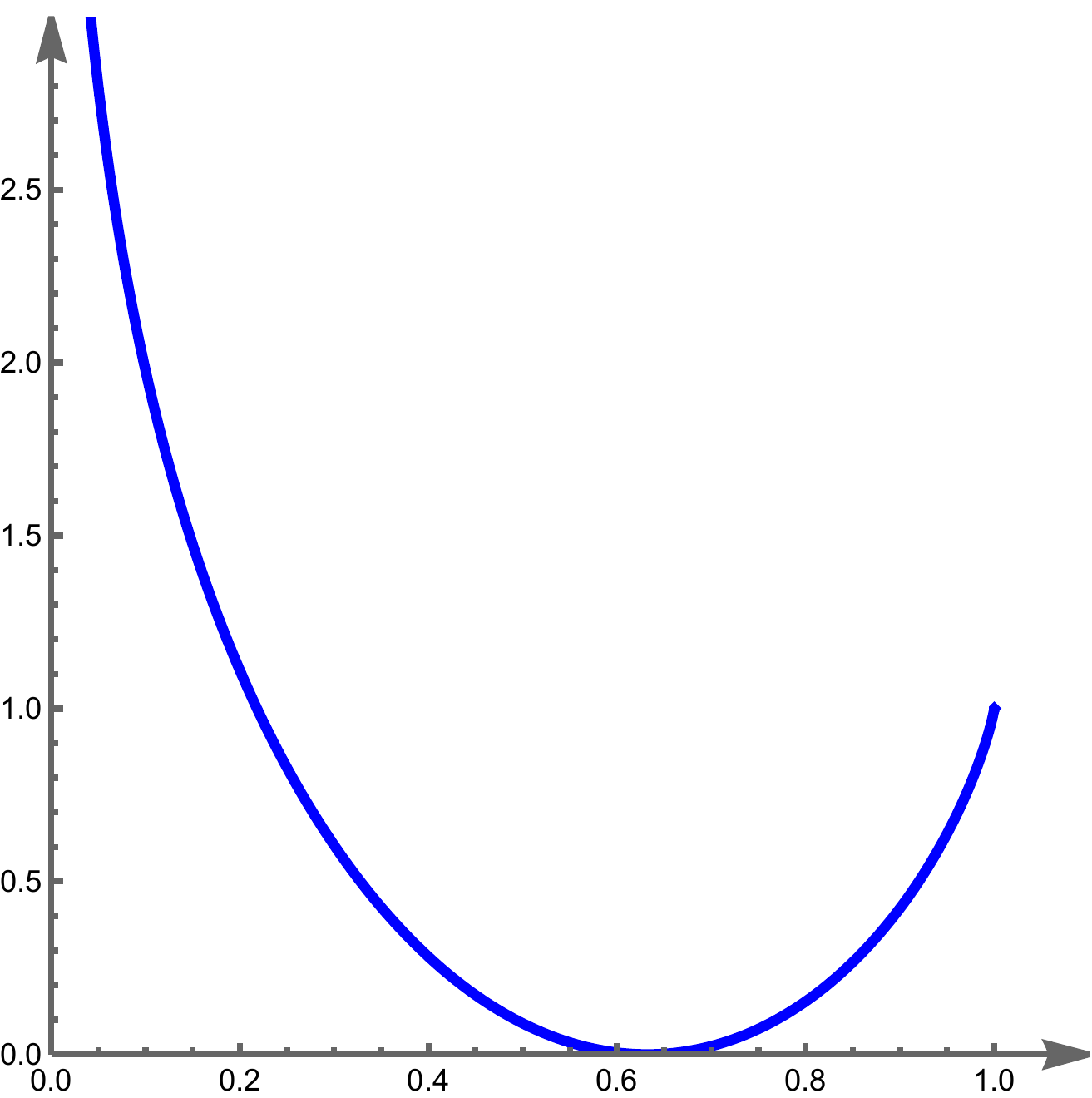}
\includegraphics[width=0.25\textwidth ]{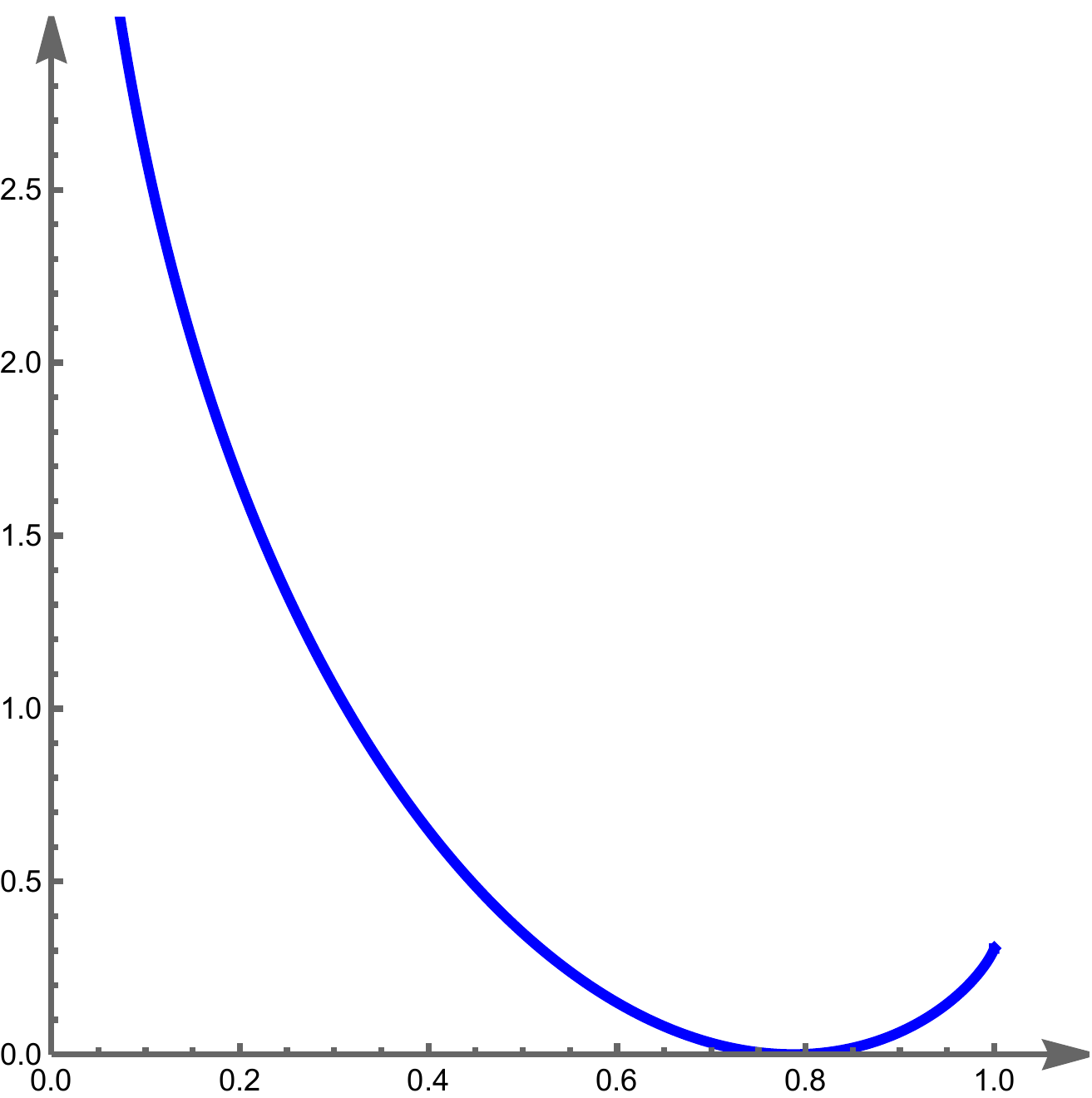}
\end{center}
\caption
{
Rate functions $I_1(t;1)$ (top left), $I_2(t;1)$ (top right), $I_3(t;1/2)$ (bottom left), $I_3(t;1)$ (bottom middle) and  $I_3(t;2)$ (bottom right); see Corollary~\ref{cor:LDP} and Corollary~\ref{cor:LDP_X_3}.
}
\label{fig:rate_function}
\end{figure}

The proofs of Theorems~\ref{thm:mod_phi_stirling1}, ~\ref{thm:mod_phi_stirling2} and~\ref{thm:mod_phi_stirling3} are given in Section~\ref{sec:mod-phi-proofs}.

Let us finally mention that the mod-$\varphi$ convergence, that we prove for $X_{n,\vartheta n}^{(i)}$, $i=1,2,3$, can be used to derive various asymptotic estimates for the Stirling numbers of both kinds. However, we refrain from presenting here the corresponding formulas since they follow, for the most part, from the known results available in the vast body of literature on the asymptotics of Stirling numbers, see, for example, \cite{bender,bleick_wang,canfield,hwang_diss,hwang_stirling,louchard_first_kind,louchard,moser_wyman,moser_wyman_stirling2,sachkov_book,temme,wilf}.

\subsection{Mod-\texorpdfstring{$\varphi$}{phi} convergence and limit theorems for zeros of generating functions}\label{sec:finite_free}

In this section we shall explain how the mod-$\varphi$ convergence is related to the convergence of empirical measures of zeros of the corresponding generating functions and derive a counterpart of Elbert's result~\eqref{eq:elbert_measures}-\eqref{eq:elbert_limit} for the zeros of $\mathcal{G}_{n,\vartheta n}^{(3)}$. The skipped proofs are postponed to Section~\ref{sec:finite_free_proof}.

Suppose that $X_n$ is a random variable which takes values in $\{0,1,2,\ldots\}$ and has a bounded support, for every fixed $n\in\N$. Then, the generating function of $X_n$, defined by
$$
\mathcal{G}_n(t):=\E t^{X_n},\quad t\in\C,\quad n\in\N,
$$
is a polynomial. Recall that $\Zeros(\mathcal{G}_n)$ denotes the multiset of zeros of $\mathcal{G}_n$ counted with multiplicities.

A sequence $(\nu_n)_{n\in \N}$ of finite measures on $\R$ converges vaguely to a finite measure $\nu$ if $\lim_{n\to\infty} \int_\R f\dd \nu_n = \int_\R f \dd \nu$ for every continuous function $f:\R\to \R$ with a compact support. If, additionally,  $\sup_{n\in\N}\nu_n(\R)<\infty$, then the above condition holds for all continuous $f$ with $\lim_{t\to \pm \infty} f(t) = 0$.

\begin{proposition}\label{prop:zeros}
Suppose that $\mathcal{G}_n$ has only real (hence, nonpositive) zeros. Suppose further that, for some domain $\mathcal{D}\subset \C$, analytic functions $\varphi$ and $\Psi$ and a sequence $(w_n)_{n\in \N}$, the sequence $(X_n)_{n\in\N}$ converges mod-$\varphi$ with the corresponding parameters, that is,~\eqref{eq:mod_phi_def_conv} holds locally uniformly on $\mathcal{D}$. Finally, suppose that $\sup_{n\in\N}{\rm deg} \, (\mathcal{G}_n)/w_n<\infty$. Then, the sequence
$$
\mathcal{Z}_n(\cdot):=\frac{1}{w_n}\sum_{x\in \Zeros(\mathcal{G}_n)}\delta_{-x}(\cdot),\quad n\in\N,
$$
of finite measures on $[0,+\infty)$, converges vaguely to a finite  limit measure $\mathcal{Z}$ supported by $[0,+\infty)$ with the Stieltjes transform
$$
\int_{[0,+\infty)}\frac{\mathcal{Z}(\dd x)}{t-x}=\frac{\varphi'(\log (-t))}{t},\quad t\in\C\setminus [0,+\infty).
$$
In particular, the function $t\mapsto \varphi'(\log (-t))/t$ admits analytic continuation to $\C\setminus [0,+\infty)$.
\end{proposition}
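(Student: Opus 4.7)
The strategy is to rewrite $w_n^{-1}\log\E \eee^{zX_n}$ as an integral against $\mathcal{Z}_n$, differentiate to produce the Stieltjes transforms of $\mathcal{Z}_n$, and then pass to a vague limit via Vitali's theorem and Stieltjes inversion.

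First, I would exploit the factorization of $\mathcal{G}_n$. Since $\mathcal{G}_n$ is a polynomial of degree $d_n := \deg \mathcal{G}_n$ with only real nonpositive zeros $-y_1^{(n)},\dots,-y_{d_n}^{(n)}$ (where $y_j^{(n)}\geq 0$) and satisfies $\mathcal{G}_n(1)=1$, one may write
\begin{equation*}
\mathcal{G}_n(t)=\prod_{j=1}^{d_n}\frac{t+y_j^{(n)}}{1+y_j^{(n)}},
\end{equation*}
so that $\mathcal{Z}_n = w_n^{-1}\sum_{j=1}^{d_n}\delta_{y_j^{(n)}}$. Setting $F_n(z):=w_n^{-1}\log \mathcal{G}_n(\eee^z)=w_n^{-1}\log \E \eee^{zX_n}$ yields the key identity
\begin{equation*}
F_n(z)=\int_{[0,\infty)}\log\frac{\eee^z+y}{1+y}\,\mathcal{Z}_n(\dd y).
\end{equation*}

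Second, the mod-$\varphi$ assumption~\eqref{eq:mod_phi_def_conv} implies $F_n(z)\to \varphi(z)$ locally uniformly on $\mathcal{D}$ (noting that $\varphi(0)=0$ since $F_n(0)=0$). The Weierstrass theorem on derivatives of locally uniformly convergent holomorphic functions then gives $F_n'(z)\to \varphi'(z)$ locally uniformly, where
\begin{equation*}
F_n'(z)=\int_{[0,\infty)}\frac{\eee^z}{\eee^z+y}\,\mathcal{Z}_n(\dd y).
\end{equation*}
Substituting $t=-\eee^z$, with $z$ ranging over a suitable open subset of $\mathcal{D}$ and $t$ over an open subset $\tilde{\mathcal{D}}\subset \C\setminus[0,\infty)$ that contains an open piece of $(-\infty,0)$, this becomes
\begin{equation*}
\int_{[0,\infty)}\frac{\mathcal{Z}_n(\dd y)}{t-y} \;=\; -\frac{F_n'(z)}{t} \;\longrightarrow\; \frac{\varphi'(\log(-t))}{t},\qquad t\in\tilde{\mathcal{D}}.
\end{equation*}

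Third, I would use the hypothesis $\mathcal{Z}_n([0,\infty))=d_n/w_n\leq C$ to extract, via Banach--Alaoglu in the dual of $C_0([0,\infty))$, vaguely convergent subsequences of $(\mathcal{Z}_n)$. For any fixed $t\in\C\setminus[0,\infty)$ the function $y\mapsto (t-y)^{-1}$ lies in $C_0([0,\infty))$, so vague convergence of uniformly bounded positive measures transmits to pointwise convergence of their Stieltjes transforms. The Stieltjes transforms of the $\mathcal{Z}_n$ are holomorphic on $\C\setminus[0,\infty)$ and uniformly bounded on compacts there (since $\bigl|\int(t-y)^{-1}\mathcal{Z}_n(\dd y)\bigr|\leq C/\dist(t,[0,\infty))$), so by Vitali's theorem the convergence established on $\tilde{\mathcal{D}}$ extends to all of $\C\setminus[0,\infty)$. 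In particular, $t\mapsto \varphi'(\log(-t))/t$ admits an analytic continuation to $\C\setminus[0,\infty)$ and coincides there with the Stieltjes transform of any vague limit $\nu$ of the $\mathcal{Z}_n$. Stieltjes inversion recovers $\nu$ uniquely, so all subsequential vague limits coincide, proving vague convergence of $\mathcal{Z}_n$ to a measure $\mathcal{Z}$ with the advertised Stieltjes transform.

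The main obstacle I anticipate is the final uniqueness step: one must combine analytic continuation (via Vitali, leveraging the growth hypothesis $\sup_n d_n/w_n<\infty$) with Stieltjes inversion to pin down the vague limit from convergence that is a priori known only on the possibly small set $\tilde{\mathcal{D}}$. The other ingredients -- factorization of $\mathcal{G}_n$ using nonpositivity of its zeros, Weierstrass's theorem on derivatives, and the reparameterization $t=-\eee^z$ -- are routine.
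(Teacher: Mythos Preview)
Your proposal is correct and follows essentially the same route as the paper: take logarithms in the mod-$\varphi$ relation, differentiate via Weierstrass, reparameterize by $t=-\eee^{z}$ to obtain convergence of the Stieltjes transforms of $\mathcal{Z}_n$, and then upgrade to vague convergence using compactness plus Stieltjes inversion (the paper cites \cite[Proposition~2.1.2(vi)]{pastur_shcherbina_book} for this last step, while you spell it out via Banach--Alaoglu and Vitali). Two minor slips: you should restrict to a subdomain where $\Psi$ does not vanish before taking logarithms (the paper calls this $\widehat{\mathcal{D}}$), and your intermediate formula has a stray minus sign---the correct identity is $\int (t-y)^{-1}\mathcal{Z}_n(\dd y)=F_n'(z)/t$, which still gives the right limit.
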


Using Proposition~\ref{prop:zeros} and Theorem~\ref{thm:mod_phi_stirling2} applied with $\vartheta=1$ we recover Elbert's result \eqref{eq:elbert_measures}-\eqref{eq:elbert_limit} for the zeros of the Touchard polynomials. Indeed,
$$
\frac{1}{z}\varphi_2^{\prime}(\log (-z);1)=\frac{1}{z}\left(\frac{1}{W_0(-1/z)}+z\right)=\frac{1}{zW_0(-1/z)}+1=1-\eee^{W_0(-1/z)},\quad z\in\C\setminus [0,\eee],
$$
and since in this case all $\mathcal{Z}_n$ and the limit measure $\mathcal{Z}$ have the total mass $1$, vague convergence secured by Proposition~\ref{prop:zeros} is equivalent to the weak convergence.

We now aim at applying Proposition~\ref{prop:zeros} to the sequence $(X_{n,\theta}^{(3)})_{n\in\N}$. It turns out that the zeros of the generating function of $X_{n,\theta}^{(3)}$ are all real for all admissible choices of the parameter $\theta$, that is, for integer $\theta>0$ or real $\theta$ satisfying $\theta>n-1$. In order to prove  this statement we shall recall some notions from finite free probability.

\subsubsection{Finite free multiplicative convolutions and the generating function of $X_{n,\theta}^{(3)}$.}

The \textit{finite free multiplicative convolution} $\boxtimes_{n}$ is a bilinear operation on the space $\C_{n}[x]$ of polynomials of degree at most $n$ which is defined  as follows:
\begin{equation}\label{eq:finite_free_conv_def}
\left(\sum_{k=0}^{n} \alpha_k x^k \right)\boxtimes_{n}\left(\sum_{k=0}^{n} \beta_k x^k \right)
=
\sum_{k=0}^{n} (-1)^{n-k}  \frac{\alpha_k \beta_k}{\binom {n}{k}} x^k;
\end{equation}
see~\cite{marcus,marcus_spielman_srivastava}.
The polynomial $(x-1)^{n}$ is the unit element under $\boxtimes_{n}$, that is $(x-1)^{n}\boxtimes_{n} p(x) =  p(x)$ for all $p \in \C_{n}[x]$.

\begin{proposition}\label{prop:rep_as_free_conv}
For all $\theta\in \R$ and an arbitrary polynomial $\sum_{k=0}^n a_k x^k \in \C_n[x]$ the following holds:
$$
\sum_{k=0}^n a_k \theta (\theta - 1) \cdots (\theta - k +1)x^k
=
\left(\sum_{k=0}^n a_kx^k\right) \boxtimes_{n} \left( n!x^n L_n^{(\theta - n)} (1/x)\right),
$$
where the generalized Laguerre polynomials $L_n^{(\alpha)}$ are defined, for $n\in \N_0$ and $\alpha\in \R$, by
$$
L_n^{(\alpha)} (x)
=
\sum_{k=0}^n (-1)^k \binom {n+\alpha}{n-k} \frac{x^k}{k!}.
$$
\end{proposition}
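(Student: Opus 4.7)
The plan is a direct computation: expand the right-hand side using the explicit formula for the Laguerre polynomial, then match coefficients through the definition~\eqref{eq:finite_free_conv_def} of the finite free multiplicative convolution. The only technical point is the index manipulation that reverses the order of coefficients after substituting $x \mapsto 1/x$ and multiplying by $x^n$.

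First, I would expand the second factor on the right-hand side. From the definition of $L_n^{(\theta-n)}$ given in the proposition, I get
$$L_n^{(\theta-n)}(y) = \sum_{k=0}^{n} (-1)^k \binom{\theta}{n-k}\frac{y^k}{k!}.$$
Substituting $y = 1/x$, multiplying by $n!\,x^n$, and re-indexing by $j = n-k$ yields
$$n!\, x^n\, L_n^{(\theta-n)}(1/x) = \sum_{j=0}^{n} (-1)^{n-j}\binom{\theta}{j}\frac{n!}{(n-j)!}\, x^j = \sum_{j=0}^{n} (-1)^{n-j}\binom{n}{j}\,\theta(\theta-1)\cdots(\theta-j+1)\, x^j,$$
where I use the elementary identity $\binom{\theta}{j}\frac{n!}{(n-j)!} = \binom{n}{j}\,\theta(\theta-1)\cdots(\theta-j+1)$, which rearranges the factorials so that the binomial coefficient $\binom{n}{j}$ appears in a form compatible with the denominator in~\eqref{eq:finite_free_conv_def}.

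Next, I would plug $\alpha_k = a_k$ and $\beta_k = (-1)^{n-k}\binom{n}{k}\,\theta(\theta-1)\cdots(\theta-k+1)$ into~\eqref{eq:finite_free_conv_def}. The coefficient of $x^k$ in the convolution then reads
$$(-1)^{n-k}\,\frac{a_k\,\beta_k}{\binom{n}{k}} = (-1)^{n-k}\cdot(-1)^{n-k}\cdot a_k\cdot \theta(\theta-1)\cdots(\theta-k+1) = a_k\,\theta(\theta-1)\cdots(\theta-k+1),$$
because the two sign factors $(-1)^{n-k}$ multiply to $1$ and the two occurrences of $\binom{n}{k}$ cancel. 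Summing over $k$ reproduces the left-hand side.

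There is no real obstacle here — the entire argument is an unwinding of definitions. The only step that demands some care is the index reversal when passing from $L_n^{(\theta-n)}(1/x)$ to a polynomial in $x$, which accounts for both the appearance of the factor $(-1)^{n-k}$ (matching the sign in the convolution) and of $\binom{n}{k}$ (matching the denominator in the convolution). Once those two features are in place, the identity is self-evident.
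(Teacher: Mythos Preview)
Your proof is correct and follows exactly the same approach as the paper: both compute $n!\,x^n L_n^{(\theta-n)}(1/x)$ explicitly as $\sum_{k=0}^n (-1)^{n-k}\binom{n}{k}\,\theta(\theta-1)\cdots(\theta-k+1)\,x^k$ and then apply the definition~\eqref{eq:finite_free_conv_def}. Your write-up is in fact slightly more detailed than the paper's, which simply states the expanded form and leaves the index reversal and the cancellation to the reader.
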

\begin{proof}
The observation becomes evident if we write
$$
n!x^n L_n^{(\theta - n)} (1/x)
%=
%\sum_{k=0}^n (-1)^{n-k}  \frac{\theta! n!}{k! (n-k)! (\theta-k)!} x^k
=
\sum_{k=0}^n (-1)^{n-k}\theta (\theta - 1) \cdots (\theta - k +1) \binom nk  x^k
$$
and use~\eqref{eq:finite_free_conv_def}.
\end{proof}
In particular, Proposition~\ref{prop:rep_as_free_conv} yields the representation
\begin{equation}\label{eq:g_3_as_free_conv}
\mathcal{G}_{n,\theta}^{(3)}(-x)= \theta^{-n}(T_n(-x) \boxtimes_{n} (n!x^n L_n^{(\theta - n)} (1/x))),\quad \theta\in\R,\quad n\in\N.
\end{equation}

\begin{proposition}\label{prop:real_roots_third_type}
If the polynomial $\sum_{k=0}^n a_k x^k$ has only nonnegative zeros, then $\sum_{k=0}^n a_k \theta (\theta-1) \ldots (\theta - k + 1) x^k$ also has only nonnegative zeros, for all real $\theta > n-1$ and all integer $\theta \in \{0,1,\ldots\}$.
\end{proposition}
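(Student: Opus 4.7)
The strategy is to exploit Proposition~\ref{prop:rep_as_free_conv} to recast the transformed polynomial as a finite free multiplicative convolution, and then to invoke the fact that $\boxtimes_n$ preserves the class of polynomials whose roots lie in $[0,\infty)$. Concretely, Proposition~\ref{prop:rep_as_free_conv} gives
\[
\sum_{k=0}^n a_k \theta(\theta-1)\cdots(\theta-k+1) x^k
=
\Bigl(\sum_{k=0}^n a_k x^k\Bigr) \boxtimes_n \bigl(n!\,x^n L_n^{(\theta-n)}(1/x)\bigr),
\]
and the first factor has only nonnegative zeros by hypothesis. It therefore suffices to establish (a) that the auxiliary factor $q(x) := n!\,x^n L_n^{(\theta-n)}(1/x)$ has only nonnegative real roots, and (b) that $\boxtimes_n$ preserves this property.

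For step (a), when $\theta > n-1$ is real (which also covers integer $\theta\geq n$), the Laguerre parameter $\alpha := \theta - n > -1$ lies in the classical orthogonality range (weight $x^\alpha \eee^{-x}$ on $[0,\infty)$), so $L_n^{(\alpha)}$ has $n$ simple positive zeros $r_1,\ldots,r_n$; hence $q$ has the $n$ positive roots $1/r_1,\ldots,1/r_n$. For integer $\theta \in \{0,\ldots,n-1\}$ I would apply the reduction identity $L_n^{(-m)}(y) = (-y)^m\,\tfrac{(n-m)!}{n!}\,L_{n-m}^{(m)}(y)$ with $m = n-\theta$, which yields
\[
q(x) = (-1)^{n-\theta}\,\theta!\, x^\theta L_\theta^{(n-\theta)}(1/x).
\]
Since $n - \theta \geq 1 > -1$, the polynomial $L_\theta^{(n-\theta)}$ again has $\theta$ simple positive zeros, so $q$ is a polynomial of degree $\theta$ with $\theta$ positive roots.

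For step (b), when both factors have degree exactly $n$, the preservation of nonnegative real-rootedness under $\boxtimes_n$ is the standard conclusion of the Marcus--Spielman--Srivastava theory of finite free multiplicative convolution~\cite{marcus, marcus_spielman_srivastava}. The remaining case of integer $\theta \in \{0,\ldots,n-1\}$, where $\deg q = \theta < n$, I would handle by a limiting argument: introduce
\[
q_N(x) := q(x)\,(1 - x/N)^{\,n-\theta},\qquad N>0,
\]
which is a polynomial of degree $n$ whose roots are the $\theta$ roots of $q$ together with $N$ of multiplicity $n-\theta$, all in $[0,\infty)$. The degree-$n$ case then yields that $p \boxtimes_n q_N$ has only nonnegative real roots; and because $q_N \to q$ uniformly on compact sets as $N\to\infty$, the continuity of $\boxtimes_n$ in its arguments (immediate from its explicit coefficient formula) together with Hurwitz's theorem forces the roots of the limit $p \boxtimes_n q$ to lie in $[0,\infty)$.

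I expect the principal delicate point to be this final limiting step, namely, tracking that exactly $n-\theta$ roots of $p \boxtimes_n q_N$ escape to $+\infty$ while the remaining $\theta$ roots converge, within $[0,\infty)$, to the roots of $p \boxtimes_n q$. This becomes routine once one observes from the explicit convolution formula that $p \boxtimes_n q$ has degree exactly $\theta$, so that Hurwitz's theorem applied to any bounded disk precludes the emergence of complex roots in the limit.
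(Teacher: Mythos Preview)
Your proposal is correct and follows essentially the same route as the paper: represent the transformed polynomial via Proposition~\ref{prop:rep_as_free_conv} as a finite free multiplicative convolution, verify that the Laguerre factor $n!\,x^n L_n^{(\theta-n)}(1/x)$ has only nonnegative real roots (orthogonality for $\theta>n-1$, the reduction identity~\eqref{eq:laguerre_duality} for integer $\theta\le n-1$), and then invoke the Marcus--Spielman--Srivastava theorem. The paper simply cites \cite[Theorem~1.6]{marcus_spielman_srivastava} for the convolution step without singling out the degree-deficient case, whereas you supply an explicit limiting argument when $\deg q = \theta < n$; this extra care is harmless and your Hurwitz argument is sound (note, though, that you do not actually need the claim that $p\boxtimes_n q$ has degree \emph{exactly} $\theta$---Hurwitz alone forces every root of the limit to lie in $[0,\infty)$).
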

\begin{proof}
It is known that $p\boxtimes_n q$ has only nonnegative zeros  provided that $p\in \C_n[x]$ and $q\in \C_n[x]$ both have only nonnegative zeros, see~\cite[Theorem~1.6]{marcus_spielman_srivastava}.  Also, it is known that all zeros of $L_n^{(\alpha)}$ are real and nonnegative for all real $\alpha > -1$ and all integer $\alpha \in \{-n,\ldots, -1\}$, see Example~2.8 in \cite{arizmendi_etal}. If $\alpha>-1$, this is true because $L_n^{(\alpha)}$ are orthogonal with respect to the finite measure $x^{\alpha} \eee^{-x}$, $x>0$, whereas for integer $\alpha\in \{-n,\ldots, -1\}$ this follows from the identity
\begin{equation}\label{eq:laguerre_duality}
\frac{(-x)^i}{i!}L_n^{(i-n)}(x)=\frac{(-x)^n}{n!}L_i^{(n-i)}(x),\quad x\in\C,\quad 0\leq i<n.
\end{equation}
Hence, the polynomial $x^n L_n^{(\theta - n)} (1/x)$ has only nonnegative  zeros and the claim follows from Proposition~\ref{prop:rep_as_free_conv}.
\end{proof}

\begin{remark}
For integer $\theta\in\{0,1,\ldots\}$ Proposition~\ref{prop:real_roots_third_type} is a classical result originally due to Schur, see Problems~155 and~156 in Part V of Volume II~\cite{polya_book}. For  related results see Section 3 in \cite{pitman_probab_bounds} and Section 3.5 in~\cite{brenti1988unimodal}.
\end{remark}

Taking into account Harper's result on the nonpositivity of zeros of the Touchard polynomials, Proposition~\ref{prop:real_roots_third_type} and formula~\eqref{eq:g_3_as_free_conv} imply that for all real $\theta > n-1$ and all integer $\theta \in \N$, all zeros of the polynomials
$\mathcal{G}_{n,\theta}^{(3)}$ are nonpositive.
From this fact we immediately obtain the following observation, which is originally due to Vatutin and Mikhailov~\cite{vatutin_mikhailov}.
Another related result can be found in~\cite{vatutin_ascending_segments}.
\begin{proposition}[Lemma 1 in~\cite{vatutin_mikhailov}]
The random variable $X_{n,\theta}^{(3)}$ which counts the number of occupied boxes when $n$ balls are allocated equiprobably and  independently among $\theta\in \N$ boxes, can be represented as a sum of independent (but not identically distributed) Bernoulli variables.
\end{proposition}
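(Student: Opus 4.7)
The plan is to exploit the real-rootedness result already established in this subsection: whenever $\theta\in \N$, every zero of $\mathcal{G}_{n,\theta}^{(3)}$ is real and nonpositive. Once this is in hand, the representation as a sum of independent Bernoullis is a classical consequence of factoring a probability generating function whose zeros lie on the nonpositive real axis, so the work reduces to identifying the Bernoulli parameters from the zeros.

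First I collect the input. By~\eqref{eq:g_3_as_free_conv},
\begin{equation*}
\theta^n\mathcal{G}_{n,\theta}^{(3)}(-x) \;=\; T_n(-x)\boxtimes_{n}\bigl(n!\,x^n L_n^{(\theta-n)}(1/x)\bigr).
\end{equation*}
Harper's theorem says all zeros of $T_n$ are nonpositive, so $T_n(-x)$ has only nonnegative zeros; Proposition~\ref{prop:real_roots_third_type} then yields the same for the finite free multiplicative convolution, and hence all zeros of $\mathcal{G}_{n,\theta}^{(3)}(t)$ are nonpositive. Since $X_{n,\theta}^{(3)}\geq 1$ almost surely but $\P[X_{n,\theta}^{(3)}=1]>0$, the polynomial $\mathcal{G}_{n,\theta}^{(3)}$ has a simple zero at the origin; writing its remaining zeros as $-r_1,\ldots,-r_{d-1}$ with $r_j>0$ and denoting its degree by $d\leq \min(n,\theta)$ gives
\begin{equation*}
\mathcal{G}_{n,\theta}^{(3)}(t) \;=\; c\cdot t\prod_{j=1}^{d-1}(t+r_j)
\end{equation*}
for some $c>0$.

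Next I rescale each linear factor. Writing $t+r_j=(1+r_j)\bigl(p_j t+(1-p_j)\bigr)$ with $p_j:=1/(1+r_j)\in(0,1)$, each rescaled factor is the probability generating function of a $\mathrm{Bernoulli}(p_j)$ variable, while the leading factor $t$ is the probability generating function of the deterministic variable equal to $1$. The overall constant is forced to be $1$ by the normalization $\mathcal{G}_{n,\theta}^{(3)}(1)=1$, so
\begin{equation*}
\mathcal{G}_{n,\theta}^{(3)}(t) \;=\; t\cdot\prod_{j=1}^{d-1}\bigl(p_j t+(1-p_j)\bigr).
\end{equation*}
Uniqueness of generating functions then delivers $X_{n,\theta}^{(3)}\eqdistr 1+B_1+\cdots+B_{d-1}$ with independent $B_j\sim\mathrm{Bernoulli}(p_j)$, and the parameters $p_j$ are visibly not all equal, since otherwise $\mathcal{G}_{n,\theta}^{(3)}(t)/t$ would have a single zero of multiplicity $d-1$, which fails in the nontrivial regime $d\geq 3$.

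There is no genuine obstacle remaining: the substantive content is the real-rootedness of $\mathcal{G}_{n,\theta}^{(3)}$, which was already extracted from the finite free multiplicative convolution identity~\eqref{eq:g_3_as_free_conv} together with Harper's theorem and Proposition~\ref{prop:real_roots_third_type}. The rest is the textbook factorization from the theory of Polya frequency sequences alluded to in~\cite{pitman_probab_bounds}.
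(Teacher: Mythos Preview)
Your argument is correct and is exactly the route the paper takes: the paper simply states that the result follows ``immediately'' from the fact, established just above via Proposition~\ref{prop:real_roots_third_type} and Harper's theorem, that all zeros of $\mathcal{G}_{n,\theta}^{(3)}$ are nonpositive. You have merely written out the standard factorization that the paper leaves implicit.

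One small remark: your final sentence about the $p_j$ not all being equal is asserted but not argued (``which fails in the nontrivial regime $d\geq 3$'' is a claim, not a proof). It is also unnecessary. The parenthetical ``(but not identically distributed)'' in the statement is most naturally read as a qualifier---the Bernoullis are not \emph{required} to be identically distributed---rather than as an additional claim to be established. If you do wish to read it literally, note that your own decomposition already contains one degenerate summand equal to $1$ almost surely (a $\mathrm{Bernoulli}(1)$), while each $B_j$ has $p_j<1$; this alone shows the summands are not identically distributed whenever $d\geq 2$.
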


\subsubsection{Limit theorem for the empirical distribution of zeros of the generating function of $X_{n,\vartheta n}^{(3)}$}
%In view of Proposition~\ref{prop:real_roots_third_type} and the fact that all zeros of $T_n$ are real we conclude that all zeros of $\mathcal{G}_{n,\theta}^{(3)}$ are real (hence, nonpositive) for integer $\vartheta$ or real $\vartheta>n-1$. Thus,
Using Proposition~\ref{prop:zeros} in conjunction with Theorem~\ref{thm:mod_phi_stirling3} we obtain the next result.
\begin{proposition}\label{prop:convergence_of_zeros_of_x_3}
Assume that $\vartheta\geq 1$. The sequence of probability measures on $[0,+\infty)$ defined  by
$$
\frac{1}{n}\sum_{x\in \Zeros(\mathcal{G}_{n,n\vartheta}^{(3)})}\delta_{-x}(\cdot),\quad n\in\N,
$$
converges weakly to a probability measure $\mathcal{Z}^{\vartheta}$ with the Stieltjes transform
\begin{equation}\label{eq:formula_st_x_3_measure}
\int_{[0,\infty)}\frac{\mathcal{Z}^{\vartheta}(\dd x)}{t-x}~=~\frac{\varphi_3^{\prime}(\log (-t);\vartheta)}{t},\quad t\in \C\setminus [0,\infty),
\end{equation}
where
$$
\frac{\varphi_3^{\prime}(\log (-t);\vartheta)}{t}
=
%\frac{1}{t}\left(\vartheta-\frac{\vartheta^2 W_0(\vartheta^{-1}\eee^{-1/\vartheta}(-1/t-1))}{(t+1)(1+\vartheta W_0(\vartheta^{-1}\eee^{-1/\vartheta}(-1/t-1))}\right),\quad t\in \C\setminus [0,\infty).
\frac{\vartheta}{t} -\frac{\vartheta^2}{t(t+1)} \frac{W_0\left(\frac {- t^{-1} - 1}{\vartheta \eee^{1/\vartheta}}\right)}{1 + \vartheta W_0\left(\frac {- t^{-1} - 1}{\vartheta \eee^{1/\vartheta}}\right)},
\quad t\in \C\setminus [0,\infty).
$$
\end{proposition}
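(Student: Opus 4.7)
The plan is to invoke Proposition~\ref{prop:zeros} with the mod-$\varphi$ convergence supplied by Theorem~\ref{thm:mod_phi_stirling3}, and then to upgrade the resulting vague convergence on $[0,\infty)$ to weak convergence by showing that the limit measure has total mass one. The real-rootedness hypothesis of Proposition~\ref{prop:zeros} is verified as follows: Harper's theorem ensures that $T_n(-x)$ has only nonnegative zeros, so Proposition~\ref{prop:real_roots_third_type} applied to the coefficient sequence $a_k = \stirlingsec{n}{k}(-1)^k$ with $\theta = \vartheta n$ (which satisfies $\theta > n-1$ since $\vartheta \geq 1$) shows that every zero of $\mathcal{G}_{n,\vartheta n}^{(3)}$ is nonpositive. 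In the same range of $\vartheta$ this polynomial has degree exactly $n$, so $\sup_n \deg(\mathcal{G}_{n,\vartheta n}^{(3)})/w_n = 1$ with $w_n = n$, verifying the remaining assumption of Proposition~\ref{prop:zeros}.

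Applying Proposition~\ref{prop:zeros} then yields vague convergence on $[0,\infty)$ of the normalized zero-counting measures to a finite measure $\mathcal{Z}^\vartheta$ whose Stieltjes transform equals $\varphi_3'(\log(-t);\vartheta)/t$ on $\C \setminus [0,\infty)$. Inserting $\eee^z = -t$ (so that $\eee^{-z} - 1 = -(t+1)/t$ and $\eee^z - 1 = -(t+1)$) into the explicit expression for $\varphi_3'(z;\vartheta)$ recorded just before Lemma~\ref{lem_mu_sigma_properties} and dividing by $t$ produces the closed form displayed in the statement.

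To upgrade vague convergence to weak convergence it suffices to show that $\mathcal{Z}^\vartheta([0,\infty)) = 1$, since each $\mathcal{Z}_n$ is a probability measure. For real $t \to -\infty$ one has $|t/(t-x)| \leq 1$ for every $x \geq 0$, so the dominated convergence theorem yields $\mathcal{Z}^\vartheta([0,\infty)) = \lim_{t \to -\infty} t\,G(t) = \lim_{z \to +\infty}\varphi_3'(z;\vartheta)$. Computing this limit is the main technical point: as $z \to +\infty$ the argument $w := \vartheta^{-1}(\eee^{-z}-1)\eee^{-1/\vartheta}$ of $W_0$ tends to $w_0 := -\vartheta^{-1}\eee^{-1/\vartheta}$ with $W_0(w_0) = -1/\vartheta$, so the denominator factor $1 + \vartheta W_0(w)$ vanishes and must be balanced against the divergence of $(\eee^z - 1)^{-1}$. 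For $\vartheta > 1$ the point $w_0$ is a regular point of $W_0$ and a first-order Taylor expansion at $w_0$ suffices; for $\vartheta = 1$, $w_0 = -1/\eee$ is the branch point of $W_0$, and one must use instead the square-root expansion $W_0(w) = -1 + \sqrt{2(\eee w + 1)} + O(\eee w + 1)$ from Appendix~\ref{appendix_Lambert}, exploiting the simplification $\eee w + 1 = \eee^{-z}$. In both regimes a direct calculation gives $\varphi_3'(z;\vartheta) \to \vartheta + (1-\vartheta) = 1$, which completes the proof.
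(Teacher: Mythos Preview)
Your proof is correct and follows the same route as the paper: apply Proposition~\ref{prop:zeros} together with Theorem~\ref{thm:mod_phi_stirling3} (with real-rootedness coming from Harper's theorem and Proposition~\ref{prop:real_roots_third_type}), then upgrade vague to weak convergence by showing $\mathcal{Z}^\vartheta$ has total mass one via $\lim_{t\to-\infty} t\int (t-x)^{-1}\,\mathcal{Z}^\vartheta(\dd x)$ using the Taylor expansion of $W_0$ at $-\vartheta^{-1}\eee^{-1/\vartheta}$ for $\vartheta>1$ and the Puiseux expansion at $-1/\eee$ for $\vartheta=1$. The only organizational difference is that the paper defers the total-mass computation to the proof of Proposition~\ref{prop:density_properties}, whereas you carry it out inline; the computations themselves are identical.
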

Note that Proposition~\ref{prop:zeros} secures only the vague convergence, however, since all the measures involved are probability measures, the weak convergence holds as well. The fact that $\mathcal Z^{\vartheta}$ is a probability measure for $\vartheta\geq 1$ will be justified in the proof of Proposition~\ref{prop:density_properties}.
\begin{remark}
Note that formula~\eqref{eq:formula_st_x_3_measure} defines
$\mathcal{Z}^{\vartheta}$ also for
$\vartheta\in (0,1)$. In this case it is an improper probability measure with the total mass $\vartheta$, see Proposition~\ref{prop:density_properties} below. Proposition~\ref{prop:convergence_of_zeros_of_x_3} remains valid in the following sense. Let $\theta_n$ be a sequence of integers such that $\theta_n\sim \vartheta n$, as $n\to\infty$, for some $\vartheta\in (0,1)$. Then the sequence of finite measures on $[0,+\infty)$ defined  by
$$
\frac{1}{n}\sum_{x\in \Zeros(\mathcal{G}_{n,\theta_n}^{(3)})}\delta_{-x}(\cdot),\quad n\in\N,
$$
converges weakly to $\mathcal{Z}^{\vartheta}$. This will be justified at the end of Section~\ref{sec:saddle_point_mod_phi}, see Remark~\ref{rem:uniformity_in_theta}.
\end{remark}

The measure $\mathcal{Z}^{\vartheta}$ turns out to be absolutely continuous with respect to the Lebesgue measure, see Figure~\ref{fig:densities_allocation}.

\begin{proposition}\label{prop:density_properties}
The  limit measure $\mathcal{Z}^{\vartheta}$ is a probability measure if $\vartheta\geq 1$, and is a finite measure with the total mass $\vartheta$ if $\vartheta\in (0,1)$. The density of $\mathcal{Z}^{\vartheta}$ is given by
\begin{equation}\label{eq:density_alloc_explicit}
g_{\vartheta}(t) = \frac {\vartheta^2}{\pi t (t+1)} \cdot  \Im  \frac{W_0\left(\frac {- t^{-1} - 1 + \iii 0}{\vartheta \eee^{1/\vartheta}}\right)}{1 + \vartheta W_0\left(\frac {- t^{-1} - 1 + \iii 0}{\vartheta \eee^{1/\vartheta}}\right)},
\qquad t>0.
\end{equation}
For all $\vartheta>0$,
\begin{equation}\label{eq:density_alloc_asympt_0}
g_\vartheta(t) \sim \frac{1}{t(\log t)^2},
\qquad
t\downarrow 0.
\end{equation}
For $\vartheta =1$,
\begin{equation}\label{eq:density_alloc_asympt_infty_vartheta1}
g_{\vartheta}(t) \sim  \frac{1}{\sqrt 2 \, \pi \, t^{3/2}},
\qquad
t\to +\infty.
\end{equation}
For $\vartheta>0$, $\vartheta\neq 1$, the density vanishes outside the interval $(0, m_\vartheta)$ with $m_\vartheta := 1/(\vartheta \eee^{(1/\vartheta) - 1} - 1)$ and
\begin{equation}\label{eq:density_alloc_asympt_infty_vartheta_neq1}
g_{\vartheta}(m_\vartheta - \eps) \sim \frac{\vartheta^{3/2} \sqrt{2\eee^{1-(1/\vartheta)}}}{\pi m_\vartheta^2 (m_\vartheta + 1) (\vartheta-1)^2} \cdot \sqrt \eps,
\qquad \eps\downarrow 0.
\end{equation}
\end{proposition}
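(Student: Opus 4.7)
My plan is to apply the Stieltjes--Perron inversion formula to the explicit Stieltjes transform from Proposition~\ref{prop:convergence_of_zeros_of_x_3} and then analyse the boundary behaviour of $W_0$ along its branch cut $(-\infty,-1/\eee]$. Set $\zeta(t):=(-t^{-1}-1)/(\vartheta\eee^{1/\vartheta})$, so that $G(t)=\vartheta/t - \vartheta^2 W/[t(t+1)(1+\vartheta W)]$ with $W=W_0(\zeta(t))$. The function $\vartheta\mapsto \vartheta\eee^{1/\vartheta-1}$ attains its unique minimum $1$ at $\vartheta=1$, so $m_\vartheta\in(0,+\infty)$ for $\vartheta\neq 1$ and $m_\vartheta=+\infty$ for $\vartheta=1$, and for $t>0$ the point $\zeta(t)$ lies on the cut $(-\infty,-1/\eee)$ precisely when $t\in(0,m_\vartheta)$. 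For $t>m_\vartheta$ the function $G$ is real-analytic and real-valued, so $\mathcal{Z}^\vartheta$ has no mass there; for $t\in(0,m_\vartheta)$, $W_0(\zeta(t)+\iii 0)$ exists, is continuous in $t$, and has strictly positive imaginary part. The Stieltjes--Perron formula $g_\vartheta(t)=-\pi^{-1}\Im G(t+\iii 0)$ together with the identity $\Im[W/(1+\vartheta W)]=\Im W/|1+\vartheta W|^2$ then yields~\eqref{eq:density_alloc_explicit} and absolute continuity on the support.

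The total-mass claim I would prove via $\mathcal Z^\vartheta(\R)=\lim_{s\to+\infty}(-s)G(-s)$, evaluated on the ray $t=-s<0$ which stays off the support. For $\vartheta>1$, $\zeta(-s)\to -\eee^{-1/\vartheta}/\vartheta\in(-1/\eee,0)$ and $W_0(-\eee^{-1/\vartheta}/\vartheta)=-1/\vartheta$, since $-1/\vartheta\in(-1,0)$ lies on the principal branch. A first-order Taylor expansion (using $W_0'(z)=W_0(z)/[z(1+W_0(z))]$) gives $1+\vartheta W \sim -\vartheta/[s(\vartheta-1)]$ and $\vartheta^2 W/[(t+1)(1+\vartheta W)]\to \vartheta-1$, so $\lim(-s)G(-s)=1$. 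For $0<\vartheta<1$, the value $-\eee^{-1/\vartheta}/\vartheta$ still lies in $(-1/\eee,0)$ by the same minimization, but $-1/\vartheta<-1$ now sits on the $W_{-1}$ branch, so $W_0(-\eee^{-1/\vartheta}/\vartheta)$ equals the unique $w^*\in(-1,0)$ with $w^*\eee^{w^*}=-\eee^{-1/\vartheta}/\vartheta$, and $1+\vartheta w^*\neq 0$; the second term of $G$ is then $O(t^{-2})$ and $\lim(-s)G(-s)=\vartheta$. The border case $\vartheta=1$ is recovered from the branch-point expansion of the last paragraph and again yields mass~$1$.

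For the behaviour at $t\downarrow 0$ I would use the classical expansion $W_0(z)=\log z-\log\log z+o(1)$ as $|z|\to\infty$ in the slit plane. Along the upper edge of the cut, $\log(x+\iii 0)=\log|x|+\iii\pi$ for $x<0$, so $\Im W_0(\zeta(t)+\iii 0)\to \pi$ while $|W|\sim|\log t|$ as $t\downarrow 0$. Hence $|1+\vartheta W|^2\sim \vartheta^2 \log^2 t$, so $\Im[W/(1+\vartheta W)]\sim \pi/(\vartheta^2\log^2 t)$, and the prefactor $\vartheta^2/[\pi t(t+1)]$ produces $g_\vartheta(t)\sim 1/(t\log^2 t)$ as in~\eqref{eq:density_alloc_asympt_0}.

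Finally, the endpoint asymptotics follow from the branch-point expansion $W_0(-1/\eee-\delta+\iii 0)=-1+\iii\sqrt{2\eee\delta}+O(\delta)$. Setting $t=m_\vartheta-\eps$ gives $\zeta(t)=-1/\eee-\delta+\iii 0$ with $\delta=\eps/(\vartheta m_\vartheta^2\eee^{1/\vartheta})+O(\eps^2)$, so $W=-1+\iii v+O(\eps)$ with $v=\sqrt{2\eee\delta}$, and
\begin{equation*}
\Im\frac{W}{1+\vartheta W}=\frac{v+O(\eps)}{(1-\vartheta)^2+\vartheta^2 v^2+O(\eps)}.
\end{equation*}
For $\vartheta\neq 1$ the denominator tends to $(1-\vartheta)^2\neq 0$ and substitution of the formula for $v$ yields~\eqref{eq:density_alloc_asympt_infty_vartheta_neq1}. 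For $\vartheta=1$ the leading term $(1-\vartheta)^2$ disappears and the ratio reduces to $1/v$; with $\delta=1/(\eee t)$ this reproduces~\eqref{eq:density_alloc_asympt_infty_vartheta1}. The main technical points will be the careful identification of which preimage of $-\eee^{-1/\vartheta}/\vartheta$ under $w\mapsto w\eee^w$ is selected by $W_0$ in the two mass regimes, and the verification that the Stieltjes--Perron inversion captures the full measure (no singular component); the three asymptotic statements then reduce to direct substitution of the classical expansions of the Lambert $W$-function.
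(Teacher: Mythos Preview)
Your proposal is correct and follows essentially the same approach as the paper: Stieltjes--Perron inversion of the transform from Proposition~\ref{prop:convergence_of_zeros_of_x_3}, the total mass via $\lim_{s\to\infty}(-s)G(-s)$ with a Taylor expansion of $W_0$ near $-\eee^{-1/\vartheta}/\vartheta$, the $t\downarrow 0$ behaviour from the large-argument asymptotics of $W_0$ along the upper edge of the cut, and the endpoint behaviour from the Puiseux expansion $W_0(-1/\eee-\delta+\iii 0)=-1+\iii\sqrt{2\eee\delta}+O(\delta)$. Your treatment of the case $0<\vartheta<1$ is in fact a bit more explicit than the paper's (which simply says the ratio $W_0/(1+\vartheta W_0)$ stays bounded): your observation that $-1/\vartheta<-1$ lies on the $W_{-1}$ branch, so that $W_0(-\eee^{-1/\vartheta}/\vartheta)\in(-1,0)$ and hence $1+\vartheta W_0\neq 0$, is exactly the missing justification. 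The paper handles the absolute-continuity issue you flag by citing a specific result (Proposition~2.1.2 in Pastur--Shcherbina), together with the $t\downarrow 0$ asymptotics to exclude an atom at $0$.
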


\begin{figure}[t]
\begin{center}
\includegraphics[width=0.45\textwidth]{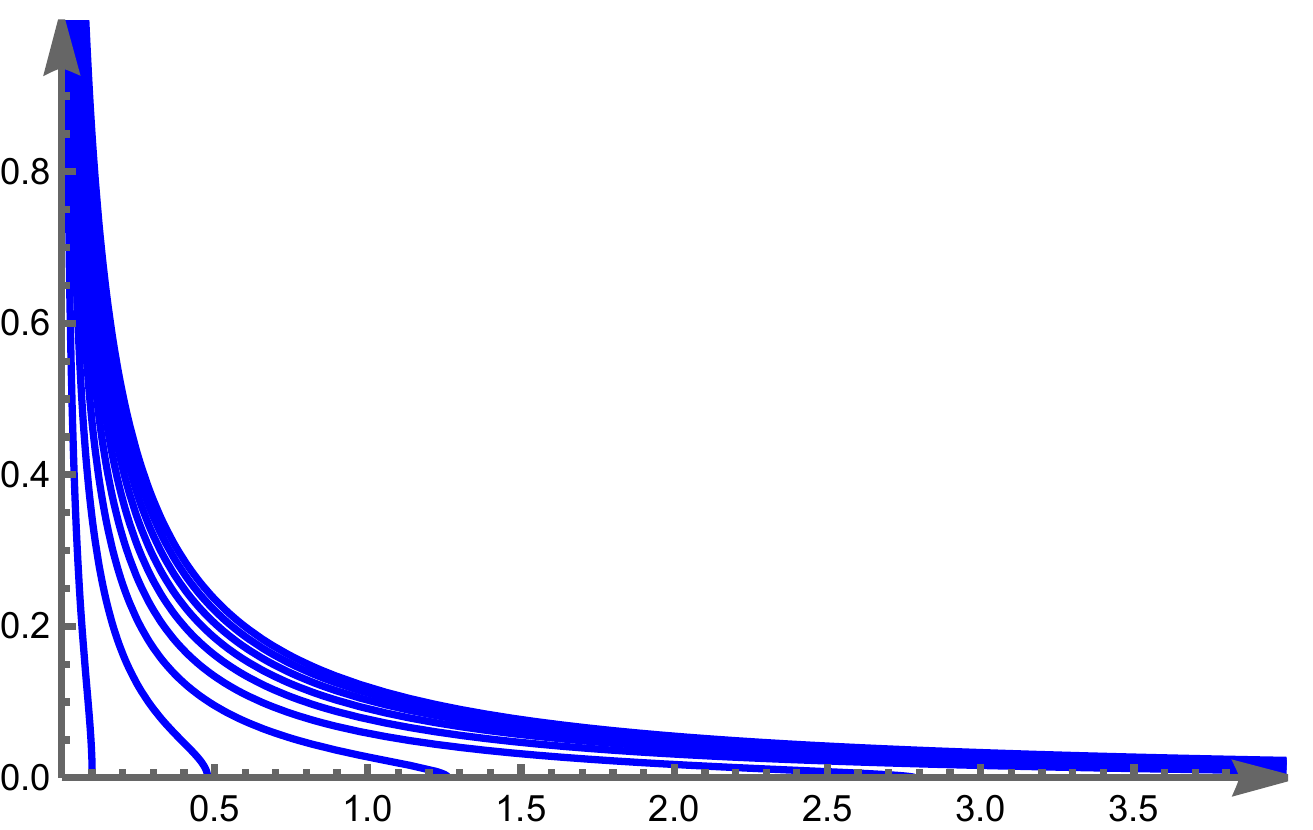}
\includegraphics[width=0.45\textwidth]{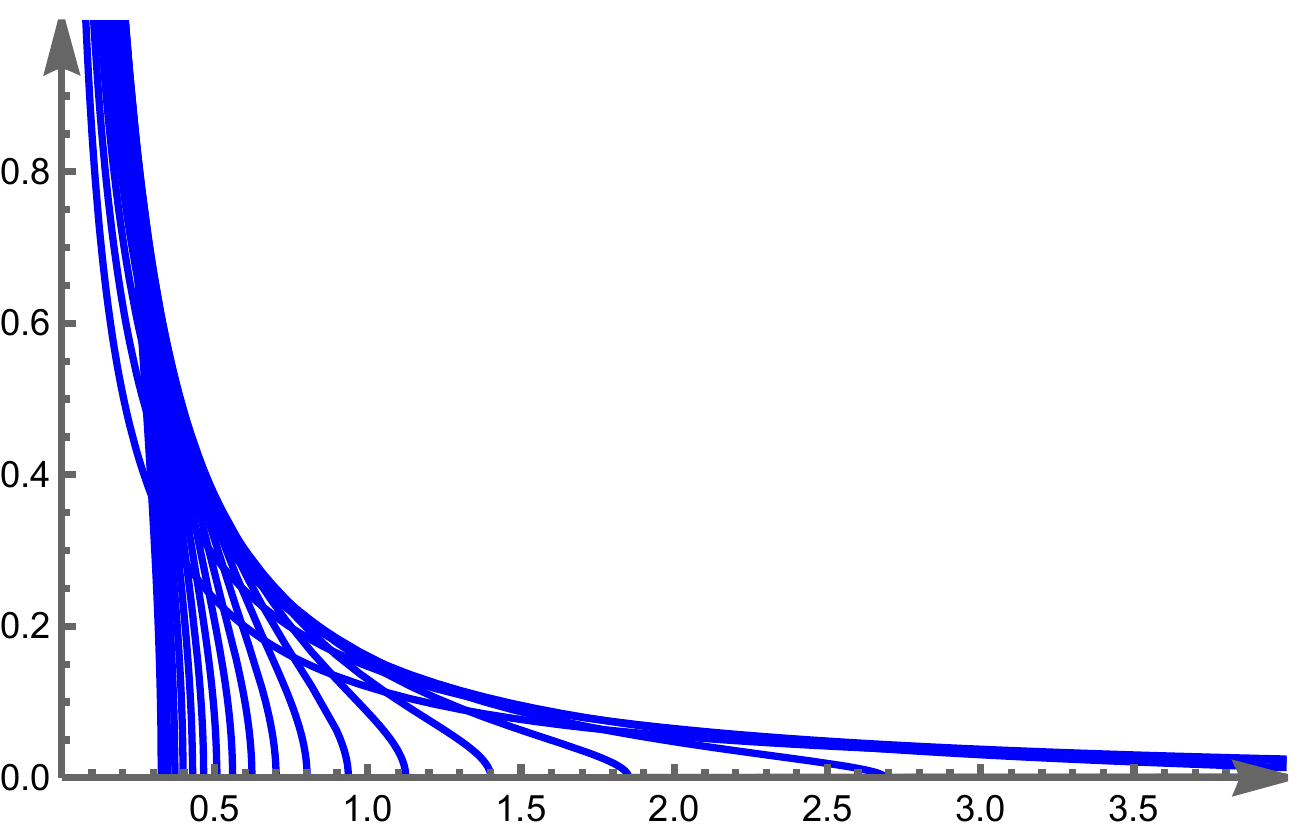}
\end{center}
\caption
{
The densities of $\mathcal{Z}^{\vartheta}$. Left: $\vartheta= 0.2,0.3,\ldots, 1$. Right: $\vartheta= 1, 1.5, 2,\ldots, 10$.
}
\label{fig:densities_allocation}
\end{figure}

%\begin{remark}
%It seems that the support of the density exactly  equals $(0,m_\vartheta)$, for all $\vartheta>0$, where we put $m_1:=+\infty$. It seems also that the density is strictly monotone decreasing (and non-vanishing) on $(0,m_\vartheta)$, for all $\vartheta>0$; see Figure~\ref{fig:densities_allocation}.
%\end{remark}
\begin{remark}
The asymptotics in~\eqref{eq:density_alloc_asympt_0} and the square-root character of the singularity in~\eqref{eq:density_alloc_asympt_infty_vartheta_neq1} are the same as for the density of Elbert's distribution~\eqref{eq:elbert_limit} supported by the interval $(0,\eee)$. If we denote the density of $\rho$ by $f_{\text{Elb}}$, then it is known~\cite[Theorem~2.4]{elbert2} that
$$
f_{\text{Elb}}(\eps) \sim \frac 1 {\eps(\log \eps)^2},
\quad
f_{\text{Elb}}(\eee - \eps) \sim \frac{\sqrt {2\eps}}{\eee^{3/2} \pi},
\quad\eps\downarrow 0.
$$
\end{remark}
\begin{remark}
Formula~\eqref{eq:density_alloc_asympt_infty_vartheta1} suggests that for $\vartheta=1$, the smallest root of the generating function $\mathcal{G}_{n,n}^{(3)}$ is asymptotically equivalent to $-c\cdot n^2$, as $n\to\infty$, for some $c>0$. This conjecture, supported by numerical simulations,  is obtained by noting that  $\int_{c n^2}^\infty \dd t /(t^{3/2})$ is of order $1/n$, as $n\to\infty$.
%See Poly_IID_coeff.nb for the simulation of roots
%For $\vartheta\neq 1$, the smallest root of the generating polynomial $\mathcal{G}_{n,\vartheta n}^{(3)}$ converges to $-m_{\vartheta}$, without any normalization.
\end{remark}

There is an alternative representation of $\mathcal{Z}^{\vartheta}$.  Recall from~\cite[Example~3.3.5]{hiai_petz_book} or~\cite[p.~35]{voiculescu_dykema_nica_book} that the Marchenko--Pastur distribution $\mu_\vartheta^{MP}$ (also called the free Poisson distribution) with parameter $\vartheta \geq 1$ has the density
$$
x\mapsto \frac 1 {2\pi x} \sqrt{((\sqrt \vartheta +1)^2 - x)(x -(\sqrt \vartheta - 1)^2 )}\1_{\{x\in [(\sqrt \vartheta - 1)^2, (\sqrt \vartheta + 1)^2]\}}.
$$
For $0< \vartheta < 1$, the Marchenko-Pastur distribution $\mu_\vartheta^{MP}$ has the same density on the same interval plus an atom at $0$ with weight $1-\vartheta$.

\begin{proposition}\label{prop:convolution_of_MP_andElbert}
If $\vartheta>1$, then $\mathcal{Z}^{\vartheta}$ is equal to the free multiplicative convolution, see~\cite[p.~30]{voiculescu_dykema_nica_book}, of the probability measure $\rho$
defined by~\eqref{eq:elbert_limit}, and the distribution of $1/\xi_\vartheta$, where $\xi_\vartheta$ has the Marchenko-Pastur distribution $\mu_\vartheta^{MP}$.
\end{proposition}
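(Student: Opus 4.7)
The plan is to derive the identity by passing to the $n\to\infty$ limit in the finite-free-convolution representation \eqref{eq:g_3_as_free_conv} of $\mathcal G^{(3)}_{n,n\vartheta}(-x)$ as a $\boxtimes_n$-product of the Touchard polynomial $T_n(-x)$ and the reversed Laguerre polynomial $n!\,x^n L_n^{(n(\vartheta-1))}(1/x)$. The main external ingredient is the continuity of $\boxtimes_n$ under weak convergence of empirical zero distributions, due to Arizmendi--Perales (see also Marcus): if $p_n,q_n\in\C_n[x]$ have only nonnegative real zeros and their empirical zero measures converge weakly to compactly supported probability measures $\mu,\nu$, then the empirical zero measure of $p_n\boxtimes_n q_n$ converges weakly to the free multiplicative convolution $\mu\boxtimes\nu$.

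To set the stage for this result I would first rescale the two factors. Using the elementary dilation rule $p(ax)\boxtimes_n q(bx)=(p\boxtimes_n q)(abx)$, which is immediate from \eqref{eq:finite_free_conv_def}, I set $P_n(x):=T_n(-nx)$ and $\tilde Q_n(x):=n^{-n}x^n L_n^{(n(\vartheta-1))}(n/x)$. Since $T_n(-x)=P_n(x/n)$ and $x^n L_n^{(n(\vartheta-1))}(1/x)=\tilde Q_n(nx)$, bilinearity of $\boxtimes_n$ together with the dilation rule converts \eqref{eq:g_3_as_free_conv} into
\[
\mathcal G^{(3)}_{n,n\vartheta}(-x)~=~\frac{n!}{(n\vartheta)^n}\,(P_n\boxtimes_n\tilde Q_n)(x),
\]
so $\mathcal G^{(3)}_{n,n\vartheta}(-x)$ and $(P_n\boxtimes_n\tilde Q_n)(x)$ have identical zero sets. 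By Elbert's theorem \eqref{eq:elbert_measures}--\eqref{eq:elbert_limit}, the empirical zero distribution of $P_n$ converges weakly to $\rho$. By classical strong asymptotics for Laguerre polynomials in the Wishart regime $\alpha_n=(\vartheta-1)n$, the empirical zero distribution of $n^{-n}L_n^{(n(\vartheta-1))}(nx)$ converges weakly to $\mu_\vartheta^{MP}$; since $\vartheta>1$, its support $[(\sqrt\vartheta-1)^2,(\sqrt\vartheta+1)^2]$ is bounded away from zero, so passing to the reciprocal polynomial shows that the empirical zero distribution of $\tilde Q_n$ converges weakly to the pushforward of $\mu_\vartheta^{MP}$ under $x\mapsto 1/x$, i.e.\ to the law of $1/\xi_\vartheta$.

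The continuity of $\boxtimes_n$ then yields that the empirical zero distribution of $P_n\boxtimes_n\tilde Q_n$ converges weakly to $\rho\boxtimes(\text{law of }1/\xi_\vartheta)$, while Proposition~\ref{prop:convergence_of_zeros_of_x_3} identifies the same limit as $\mathcal Z^\vartheta$. Comparing the two limits yields the claim. The main conceptual obstacle is coordinating the three different scalings that appear, namely the unscaled zeros entering \eqref{eq:g_3_as_free_conv}, the $1/n$-scaling of Elbert's result, and the $1/n$-scaling of the Laguerre limit in the Wishart regime; the two dilations above are chosen precisely so that the corresponding scale factors $a=1/n$ and $b=n$ satisfy $ab=1$, ensuring that $(P_n\boxtimes_n\tilde Q_n)(x)$ has the same argument $x$ on both sides of the identity. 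A purely analytic alternative would be to compute the $S$-transforms of $\rho$, of the law of $1/\xi_\vartheta$, and of $\mathcal Z^\vartheta$ (the latter using the explicit Stieltjes transform \eqref{eq:formula_st_x_3_measure}) and verify multiplicativity directly, but the finite-free route is more structural and explains the appearance of a Marchenko--Pastur factor.
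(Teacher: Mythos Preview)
Your proposal is correct and follows essentially the same route as the paper's proof: you rescale the two factors in \eqref{eq:g_3_as_free_conv} via the dilation identity for $\boxtimes_n$ so that the Touchard factor picks up Elbert's limit $\rho$ and the reversed Laguerre factor picks up the law of $1/\xi_\vartheta$, and then invoke the continuity theorem for finite free multiplicative convolution (Theorem~1.4 in \cite{arizmendi_etal}). The paper additionally cites \cite{dette_studden} for the Marchenko--Pastur limit and for the convergence of the extreme Laguerre zeros (ensuring the reciprocals stay in a compact set bounded away from~$0$), which you implicitly use when passing from $L_n^{(n(\vartheta-1))}(n\cdot)$ to its reciprocal polynomial.
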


\begin{proof}%[Proof of Proposition~\ref{prop:convolution_of_MP_andElbert}]
It follows from the definition of $\boxtimes_n$ that $p(cx)\boxtimes_n q(x/c) = p(x) \boxtimes_n q(x)$ for every constant $c\neq 0$ and arbitrary $p,q\in \C_n[x]$. Thus, by formula~\eqref{eq:g_3_as_free_conv},
$$
\theta^n \mathcal{G}_{n,\theta}^{(3)}(-x)
=
\left(\sum_{k=0}^n \stirlingsec{n}{k} (-xn)^k \right) \boxtimes_{n} \left( n! n^{-n} x^n L_n^{(\theta - n)} (n/x)\right)
=:
p_n(x) \boxtimes_n q_n(x).
$$
It is known that the empirical distribution of zeros of $p_n$ converges to the measure $\rho$ defined by~\eqref{eq:elbert_limit}.   Also, it is known from~\cite[Theorem~3.1 (a)]{dette_studden} that the empirical distribution of zeros of $y\mapsto L_n^{(\vartheta n - n)} (ny)$ converges weakly to the Marchenko--Pastur distribution $\mu_\vartheta^{MP}$. Moreover, by~\cite[Theorem~4.4(a)]{dette_studden}, the smallest and the largest zero of this polynomial converge to $(\sqrt \vartheta - 1)^2>0$, respectively,  $(\sqrt \vartheta + 1)^2<\infty$.  Hence, the empirical measure of zeros of $ q_n(x) = n! n^{-n} x^n L_n^{(\theta - n)} (n/x)$ converges weakly to the distribution of $1/\xi_\vartheta$. The proof concludes by an appeal to Theorem~1.4 from~\cite{arizmendi_etal}.
\end{proof}

\begin{remark}
Proposition~\ref{prop:convolution_of_MP_andElbert} remains valid for $\vartheta\in (0,1)$, if regard the distribution of $1/\xi_{\vartheta}$ as an improper probability measure on $\R$ with the total weight $\vartheta$ (and ignore the atom at $+\infty$ of the weight $1-\vartheta$ which occurs due to the atom at $0$ of the Marchenko--Pastur distribution). The result on the distribution of zeroes of the polynomials $L_n^{(\vartheta n - n)} (ny)$ follows from~\cite[Theorem~3.1 (a)]{dette_studden}, by taking into account~\eqref{eq:laguerre_duality}.
\end{remark}

\section{Proofs for Section~\ref{subsec:def_mod_phi}}\label{sec:mod-phi-proofs}

\subsection{Proof of Theorems~\ref{thm:mod_phi_stirling1}, \ref{thm:mod_phi_stirling2} and~\ref{thm:mod_phi_stirling3}}\label{sec:saddle_point_mod_phi}

We start with the known asymptotic results for the functions $S_n$, $T_n$. Proposition ~\ref{prop:asympt_factorial1} which deals with the asymptotics of $S_n$ is easy and follows from the standard Stirling asymptotics. Proposition~\ref{prop:asympt_factorial2} is also known~\cite[Theorems 3.2 and 3.3]{elbert1}; see also~\cite{paris_touchard,paris_uniform,zhao} for related results.
Nevertheless, we find it instructive to formulate and prove them both as a preparation to the proof of Theorem~\ref{thm:mod_phi_stirling3}.

\begin{proposition}\label{prop:asympt_factorial1}
Let $\mathcal{D}^{\prime}_1 = \C \setminus (-\infty,0]$. Then with the function $L_1$ defined by~\eqref{eq:def_L_1} it holds
\begin{equation}\label{eq:asympt_factorial}
\frac{S_n(z n)}{n!}~=~
\frac 1 {\sqrt{2\pi n}} \sqrt{\frac{z}{z+1}} \eee^{n L_1(z)}\left(1+O\left(\frac{1}{n}\right)\right),\quad n\to\infty,
\end{equation}
locally uniformly in $z\in\mathcal{D}^{\prime}_1$.
\end{proposition}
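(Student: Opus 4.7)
The plan is to reduce this to Stirling's asymptotic expansion for the Gamma function. Writing the rising factorial as a ratio of Gammas,
$$
S_n(x) = x(x+1)\cdots(x+n-1) = \frac{\Gamma(x+n)}{\Gamma(x)},
$$
and setting $x = nz$, the target expression becomes
$$
\frac{S_n(zn)}{n!} = \frac{\Gamma(n(z+1))}{\Gamma(nz)\,\Gamma(n+1)}.
$$
So it suffices to apply Stirling's expansion to each of the three Gamma factors and collect terms.

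First I would verify that $z \in \mathcal{D}_1^{\prime} = \C\setminus(-\infty,0]$ implies $z+1 \in \mathcal{D}_1^{\prime}$: if $\Im z \neq 0$ this is immediate since $\Im(z+1) = \Im z$, while if $z$ is real then $z>0$ forces $z+1>0$. Consequently, for $z$ ranging over a fixed compact $K \subset \mathcal{D}_1^{\prime}$, all three arguments $nz$, $n(z+1)$, $n$ lie in a common closed sector $\{|\arg w| \le \pi - \delta\}$ on which the uniform Stirling expansion
$$
\log\Gamma(w) = \bigl(w-\tfrac12\bigr)\log w - w + \tfrac12\log(2\pi) + O(1/|w|)
$$
is valid. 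Since $|nz|$, $|n(z+1)|$ and $n$ are all comparable to $n$ on $K$, the three remainders combine into a single $O(1/n)$ uniform in $z \in K$.

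Next I would simply add up the expansions. The terms linear in $n$ collapse (after the cancellation $-n(z+1) + nz + n = 0$) to
$$
n\bigl[(z+1)\log(z+1) - z\log z\bigr] = n L_1(z),
$$
the coefficients of $\log n$ sum to $-\tfrac12$, and the remaining constant part reduces to $\tfrac12\log\frac{z}{z+1} - \tfrac12\log(2\pi)$. Exponentiating produces precisely the prefactor $\frac{1}{\sqrt{2\pi n}}\sqrt{z/(z+1)}\,\eee^{n L_1(z)}$ multiplied by $1 + O(1/n)$, locally uniformly on $\mathcal{D}_1^{\prime}$.

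There is no serious obstacle here; the only thing to be mindful of is the uniformity of the Stirling remainder as $z$ traverses a compact subset of the slitted plane, which is standard. This preparatory lemma sets the template for the more involved saddle-point analyses behind Propositions of the Touchard and Stirling-Sibuya type that will follow.
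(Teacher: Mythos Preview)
Your proof is correct and follows essentially the same route as the paper's own argument: rewrite $S_n(zn)/n!$ as a ratio of three Gamma functions, observe that for $z$ in a compact subset of $\C\setminus(-\infty,0]$ the arguments $nz$, $n(z+1)$, $n$ stay in a fixed sector $|\arg w|\le \pi-\delta$, and then apply the uniform Stirling expansion and collect terms. The only cosmetic differences are that the paper writes $n\Gamma(n)$ in place of your $\Gamma(n+1)$ and uses the multiplicative form of Stirling's formula rather than the logarithmic one.
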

\begin{proof}
Using the definition of $S_n$ we write
$$
\frac{S_n(z n)}{n!} = \frac{z n (z n+1)\cdots (z n +n-1)}{n!}
=
\frac{\Gamma((z+1)n)}{\Gamma(z n) n\Gamma(n)}.
$$
Recall the Stirling approximation for the gamma-function,
\begin{equation}\label{eq:stirling}
\Gamma(z)~=~ \sqrt{\frac{2\pi}{z}}\left(\frac z\eee\right)^z\left(1+O\left(\frac{1}{z}\right)\right),
\quad |z|\to+\infty,\quad
|\arg z| < \pi -\eps,
\end{equation}
where $\eps>0$ is arbitrary and a constant in the Landau symbol $O$ depends only on $\eps$.

Let $K$ be a compact subset of $\mathcal{D}^{\prime}_1$. Then there exists $\eps(K)$ such that $|\arg(zn)|<\pi-\eps(K)$ and $|\arg((z+1)n)|<\pi-\eps(K)$ for all $z\in K$ and all sufficiently large $n\in\N$. Thus,~\eqref{eq:stirling} yields
\begin{multline*}
\frac{S_n(z n)}{n!}~=~
\frac
{\sqrt{\frac{2\pi}{(z+1)n}} \left(\frac{(z+1) n}{\eee}\right)^{(z+1)n}}
{\sqrt{\frac{2\pi}{z n}} \left(\frac{z n}{\eee}\right)^{z n}
\sqrt{2\pi n} \left(\frac{n}{\eee}\right)^{n}}\left(1+O\left(\frac{1}{n}\right)\right)\\
=
\frac 1 {\sqrt{2\pi n}} \sqrt{\frac{z}{z+1}}\eee^{n ((z+1)\log (z+1) - z \log z)}\left(1+O\left(\frac{1}{n}\right)\right),\quad n\to\infty,
\end{multline*}
where a constant in the Landau symbol depends only on $\eps(K)$.
\end{proof}

\begin{proposition}\label{prop:asympt_factorial2}
There exists a domain $\mathcal{D}^{\prime}_2\subset \C$ containing $\R\setminus [-\eee,0]$ such that
\begin{equation}\label{eq:asympt_touchard_poly}
\frac{T_n(nz)}{n!}~=~
\frac 1 {\sqrt{2\pi n}} \frac {1} {\sqrt {W_0(1/z)+1}} \eee^{n L_2(z)}\left(1+O\left(\frac{1}{n}\right)\right),\quad n\to\infty,
\end{equation}
locally uniformly in $z\in\mathcal{D}^{\prime}_2$, where the function $L_2$ is defined by~\eqref{eq:def_L_2}.
\end{proposition}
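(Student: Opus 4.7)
My approach is a direct application of the saddle point method to the Cauchy coefficient integral
$$
\frac{T_n(nz)}{n!}
~=~
\frac{1}{2\pi \iii} \oint \frac{\exp\{nz(\eee^t - 1)\}}{t^{n+1}}\, \dd t
~=~
\frac{1}{2\pi \iii} \oint \frac{1}{t} \exp\{n f(t;z)\}\, \dd t,
$$
where $f(t;z) := z(\eee^t - 1) - \log t$ and the integral representation follows from the second identity in~\eqref{eq:touchard_poly_def}. The saddle point equation $\partial_t f(t;z) = z\eee^t - 1/t = 0$, i.e.\ $t\eee^t = 1/z$, has $t_0(z) := W_0(1/z)$ as its principal solution. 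Using the defining relation $W_0(w)\eee^{W_0(w)} = w$, which implies $\log W_0(w) + W_0(w) = \log w$, one verifies
$$
f(t_0;z) = z(\eee^{t_0} - 1) - \log t_0 = \tfrac{1}{t_0} - z - \log t_0 = W_0(1/z) + \tfrac{1}{W_0(1/z)} - z + \log z = L_2(z),
$$
which recovers the phase claimed in~\eqref{eq:asympt_touchard_poly}.

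The next step is to deform the contour to the circle $|t| = |t_0(z)|$ passing through the saddle and parameterise it as $t = t_0\eee^{\iii\theta}$, $\theta \in [-\pi,\pi]$, with $\dd t = \iii t_0 \eee^{\iii\theta}\dd\theta$, giving
$$
\frac{T_n(nz)}{n!}
~=~
\frac{1}{2\pi}\int_{-\pi}^{\pi} \exp\{n g(\theta;z)\}\,\dd\theta,
\qquad
g(\theta;z) := f(t_0\eee^{\iii\theta};z).
$$
Direct differentiation together with $zt_0\eee^{t_0} = 1$ yields $g(0;z) = L_2(z)$, $g'(0;z) = \iii(zt_0\eee^{t_0}-1) = 0$, and $g''(0;z) = -(1+t_0) = -(1+W_0(1/z))$. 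Consequently the formal Gaussian approximation at the saddle produces exactly
$$
\frac{\eee^{nL_2(z)}}{2\pi}\sqrt{\frac{2\pi}{n(1+W_0(1/z))}}
~=~
\frac{\eee^{nL_2(z)}}{\sqrt{2\pi n(1+W_0(1/z))}},
$$
matching the claimed pre-factor. The natural domain $\mathcal{D}'_2$ is then prescribed by two analyticity conditions: $W_0(1/z)$ must be analytic, i.e.\ $1/z \notin (-\infty,-1/\eee]$, forcing $z \notin [-\eee,0]$; and the saddle must be non-degenerate, i.e.\ $1+W_0(1/z)\neq 0$, which fails only at $z=-\eee$. Both conditions hold on an open complex neighbourhood of $\R \setminus [-\eee,0]$.

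The real technical content is to upgrade the heuristic Laplace expansion to a genuine estimate that is locally uniform in $z \in \mathcal{D}'_2$ with relative error $O(1/n)$. For this I would invoke the uniform saddle point machinery developed in Appendix~\ref{sec:saddle_point}. The inputs that must be verified are (i) a uniform bound of the form $\Re(L_2(z) - g(\theta;z)) \geq c(z)\,\theta^2$ on a fixed small arc $|\theta| \leq \theta_0$, and a strict uniform gap $\Re(L_2(z) - g(\theta;z)) \geq \delta(z) > 0$ on the complementary arc, so that contributions away from the saddle are exponentially small; and (ii) a uniform bound on the third derivative $\partial_\theta^3 g$ on the small arc, which controls the Taylor remainder and supplies the $O(1/n)$ relative correction. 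Ingredient (ii) is routine once $t_0(z)$ is known to stay in a compact set bounded away from $0$ and $-1$, which is guaranteed by working on compact subsets of $\mathcal{D}'_2$. The main obstacle is (i), the dominance of the saddle along the full circle: for real $z > 0$ this is essentially a one-variable convexity statement, but for complex $z$ it requires tracking the analytic dependence of $t_0(z) = W_0(1/z)$ and leveraging the quantitative properties of the principal branch gathered in Appendix~\ref{appendix_Lambert}. With (i) and (ii) in hand, the standard Laplace expansion yields~\eqref{eq:asympt_touchard_poly} with uniform $O(1/n)$ error on compacta.
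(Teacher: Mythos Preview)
Your approach coincides with the paper's: both apply the saddle point method to the same Cauchy integral, locate the saddle at $W_0(1/z)$, and recover the phase $L_2(z)$ and Gaussian prefactor exactly as in~\eqref{eq:asympt_touchard_poly}. Two points deserve comment.

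First, you slightly over-complicate the uniformity step. The machinery of Theorem~\ref{thm:fedoryuk} already delivers locally uniform asymptotics in a complex disk $\bD_{\delta(z_0)}(z_0)$ once the saddle-point conditions (A3) are verified \emph{at a single real base point} $z_0$. You therefore do not need to establish saddle dominance for general complex $z$; it suffices to check it for each real $z_0\in\R\setminus[-\eee,0]$ and then set $\mathcal D_2':=\bigcup_{z_0}\bD_{\delta(z_0)}(z_0)$, covering compacta by finitely many such disks.

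Second, and more substantively, your dominance discussion treats only $z_0>0$ (where positivity of the Taylor coefficients of $x\mapsto\eee^{z_0(\eee^x-1)}$ immediately shows the saddle is the unique maximum of $|f(\cdot,z_0)|$ on the circle). The statement, however, also requires $z_0<-\eee$. In that range the saddle $\zeta(z_0)=W_0(1/z_0)\in(-1,0)$ is \emph{negative}, and the positivity argument no longer applies: one must show that $x\mapsto\Re(\eee^x)$ attains its unique \emph{minimum} on the circle $|x|=|\zeta(z_0)|<1$ at the negative real point. The paper handles this via the constrained optimisation $\eee^a\cos b\to\min$ subject to $a^2+b^2=R^2<1$, whose unique minimiser is $(a,b)=(-R,0)$. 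Without this separate check your argument does not cover the left component of $\R\setminus[-\eee,0]$.
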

\begin{proof}
Let us assume first that $z_0$ is a fixed real number and $z_0\in \R\setminus [-\eee,0]$. Using the first equality in~\eqref{eq:touchard_poly_def} and the Cauchy integral formula we can write
$$
\frac{T_n(nz_0)}{n!} =\frac{1}{2\pi \iii} \oint_{\gamma} \frac{1}{x}\left(\frac{\eee^{z_0 (\eee^x-1)}}{x}\right)^n \dd x,
$$
where $\gamma$ is an arbitrary circle centered at $0$ and oriented  counterclockwise. We are now going to apply the saddle point method in a form given in Theorem~\ref{thm:fedoryuk} below with
$$
\Omega_x:=\C\setminus\{0\},\quad \Omega_z:=\C,\quad g(x,z):=x^{-1},\quad f(x,z):=x^{-1}\eee^{z (\eee^x-1)},
$$
and $\gamma=\gamma(z_0)$ being a circle centered at $0$ and of a radius to be specified next.

Given $z_0$, we want $\gamma(z_0)$ to pass through a simple saddle point of $x\mapsto \log f(x,z_0)$. This saddle point $\zeta=\zeta(z_0)$ is defined by the equation $\zeta(z_0)\eee^{\zeta(z_0)} = 1/z_0$.  One of its solutions  is given by $\zeta(z_0)= W_0(1/z_0)\in (-1,\infty)\setminus\{0\}$, since we assume $z_0\in \R\setminus [-\eee,0]$. Note that
$$
\log f(\zeta(z_0),z_0)
=
z_0(\eee^{W_0(1/z_0)} - 1) - \log W_0(1/z_0)
=
W_0(1/z_0) + \frac{1}{W_0(1/z_0)} - z_0 + \log z_0
=
L_2(z_0),
$$
since $\eee^{W_0(1/z_0)}W_0(1/z_0) = 1/z_0$ and hence $-\log W_0(1/z_0) = \log z_0 + W_0(1/z_0)$.  Further, $\left(\log f(x,z)\right)_{xx}^{\prime\prime} = z\eee^x + x^{-2}$ and hence,
$$
\left(\log f(x,z_0)\right)_{xx}^{\prime\prime}|_{x=\zeta(z_0)}
=
z_0 \eee^{W_0(1/z_0)} + \frac{1}{W_0^2(1/z_0)}
=
\frac{1}{W_0(1/z_0)}+\frac{1}{W_0^2(1/z_0)}
=
\frac{W_0(1/z_0) + 1}{W_0^2(1/z_0)}
>0.
$$
Let $\gamma = \gamma(z_0)$ be the circle centered at the origin and passing through $\zeta(z_0)$. Taking everything together and applying formula~\eqref{eq:fedoryuk_asymp} in Theorem~\ref{thm:fedoryuk} proves that there exists $\delta(z_0)>0$ such that
\begin{equation}\label{eq:touchard_t_n_asymp1}
\frac{T_n(n z)}{n!}~=~\frac 1 {\sqrt{2\pi n}} \frac {1} {\sqrt {W_0(1/z)+1}} \eee^{n L_2(z)}\left(1+O\left(\frac{1}{n}\right)\right),\quad n\to\infty,
\end{equation}
uniformly in the disk $\mathbb{D}_{\delta(z_0)}(z_0)$. The only assumption of Theorem~\ref{thm:fedoryuk} which requires verification is the fact that $x\mapsto |x^{-1}\eee^{z_0 (\eee^x-1)}|$ attains a unique maximum on $\gamma(z_0)$ at $\zeta(z_0)$, or equivalently $x\mapsto \eee^{z_0\Re(\eee^x)}$
attains a unique maximum on $\gamma(z_0)$ at $\zeta(z_0)$. This is obvious if $z_0>0$ since the Taylor expansion of $x\mapsto \eee^{z_0\Re(\eee^x)}$ has only nonnegative coefficients and $\zeta(z_0)>0$. If $z_0<-\eee$, then $\zeta(z_0)\in (-1,0)$. Thus, it suffices to show that the function $x\mapsto \Re(\eee^x)=\eee^{\Re(x)}\cos(\Im(x))$ attains on a circle of radius smaller than $1$ centered at the origin a unique minimum at a negative real. This can be easily checked by solving the constrained optimization problem:
$$
\eee^a\cos(b)\longrightarrow \min\quad \text{ subject to }\quad a^2+b^2=R^2<1,\quad R \text{ is fixed},\quad  a,b\in\R.
$$

Put $\mathcal{D}^{\prime}_2:=\bigcup_{z_0\in \R\setminus [-\eee,0]}\mathbb{D}_{\delta(z_0)}(z_0)$. If $K$ is a compact subset of $\mathcal{D}^{\prime}_2$, then $K$ can be covered by finitely many disks from the collection $\{\bD_{\delta(z_0)}(z_0):z_0\in \R\setminus [-\eee,0]\}$ which yields the desired uniformity, that is, \eqref{eq:touchard_t_n_asymp1} holds uniformly in $z\in K$.
\end{proof}

Using Propositions~\ref{prop:asympt_factorial1} and~\ref{prop:asympt_factorial2} one can easily deduce Theorems~\ref{thm:mod_phi_stirling1} and~\ref{thm:mod_phi_stirling2}. The proof of Theorem~\ref{thm:mod_phi_stirling3} is more involved but still relies on the saddle point method.

\begin{proof}[Proof of Theorem~\ref{thm:mod_phi_stirling1}]
Applying~\eqref{eq:asympt_factorial} twice, we obtain
\begin{align*}
\frac{S_n(\vartheta n)}{n!}&=\frac 1 {\sqrt{2\pi n}} \sqrt{\frac{\vartheta}{\vartheta + 1}} \eee^{n L_1(\vartheta)}\left(1+O\left(\frac{1}{n}\right)\right),\\
\frac{S_n(\vartheta \eee^z n)}{n!}&= \frac 1 {\sqrt{2\pi n}} \sqrt{\frac{\vartheta \eee^z}{\vartheta \eee^z+1}} \eee^{n L_1(\vartheta \eee^z)}\left(1+O\left(\frac{1}{n}\right)\right),
\end{align*}
provided $\vartheta\in \mathcal D^{\prime}_1$ and $\vartheta \eee^z\in\mathcal D^{\prime}_1$. The first condition is fulfilled automatically, whereas the second hold if $z\in \mathcal{D}_1$, where the domain $\mathcal{D}_1$ is defined as the preimage of $\mathcal{D}^{\prime}_1$ under the map $z\mapsto \vartheta \eee^z$. Clearly, $\mathcal{D}_1$ contains the real axis.
Since $\E \eee^{zX^{(1)}_{n,\vartheta n}}$ is given by the quotient of these expressions, we arrive at~\eqref{eq:mod_phi_1}.
\end{proof}
\begin{proof}[Proof of Theorem~\ref{thm:mod_phi_stirling2}]
The proof of~\eqref{eq:mod_phi_2} is similar. Applying\eqref{eq:asympt_touchard_poly} twice, we obtain
\begin{align*}
\frac{T_n(\vartheta n)}{n!} &= \frac 1 {\sqrt{2\pi n}} \frac {\eee^{n L_2(\vartheta)}} {\sqrt {W_0(\vartheta^{-1})+1}}\left(1+O\left(\frac{1}{n}\right)\right),\\
\frac{T_n(\vartheta \eee^z n)}{n!} &= \frac 1 {\sqrt{2\pi n}} \frac {\eee^{n L_2(\vartheta \eee^z)}} {\sqrt {W_0(\vartheta^{-1}\eee^{-z})+1}}\left(1+O\left(\frac{1}{n}\right)\right).
\end{align*}
Since $\E \eee^{zX^{(2)}_{n,\vartheta n}}$ is given by the quotient of these expressions, we arrive at~\eqref{eq:mod_phi_2}. The set $\mathcal{D}_2$ is defined as the preimage of $\mathcal{D}_2^{\prime}$ under the map $z\mapsto \vartheta \eee^z$. Clearly, $\R\subset \mathcal{D}_2$.
\end{proof}

The next lemma provides a useful integral representation of the generating function $\mathcal{G}_{n,\theta}^{(3)}$ of $X^{(3)}_{n,\theta}$.

\begin{lemma}\label{lem:stirling_3_integral_rep}
Assume that $n\in\mathbb{N}$ and $\theta>0$. Let $\mathcal{D}$ be an arbitrary bounded domain in $\mathbb{C}$. Then for an arbitrary sufficiently small closed contour $\gamma$ encircling the origin counterclockwise  the following holds true:
$$
\mathcal{G}_{n,\theta}^{(3)}(t) = \frac{n!}{\theta^n 2\pi\iii}\oint_{\gamma}s^{-n-1}\left(1+t(\eee^s-1)\right)^{\theta}{\rm d}s
,\quad t\in \mathcal{D}.
$$
\end{lemma}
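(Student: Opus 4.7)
The plan is to start from the classical exponential generating function identity for Stirling numbers of the second kind,
\begin{equation*}
\sum_{n\ge k}\stirlingsec{n}{k}\frac{s^n}{n!}=\frac{(\eee^s-1)^k}{k!},
\end{equation*}
which is convergent for all $s\in\C$, and apply the Cauchy coefficient formula to obtain the representation
\begin{equation*}
\stirlingsec{n}{k}=\frac{n!}{2\pi\iii}\oint_{\gamma}\frac{(\eee^s-1)^k}{k!\,s^{n+1}}\,\dd s
\end{equation*}
for any closed contour $\gamma$ encircling the origin counterclockwise. I would substitute this into the defining sum
\begin{equation*}
\mathcal{G}_{n,\theta}^{(3)}(t)=\frac{1}{\theta^n}\sum_{k=1}^{n}\stirlingsec{n}{k}\,\theta(\theta-1)\cdots(\theta-k+1)\,t^k,
\end{equation*}
interchange the (finite) sum with the integral, and recognize the falling factorial as a binomial coefficient via $\theta(\theta-1)\cdots(\theta-k+1)/k!=\binom{\theta}{k}$. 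This yields
\begin{equation*}
\mathcal{G}_{n,\theta}^{(3)}(t)=\frac{n!}{\theta^n\,2\pi\iii}\oint_{\gamma}s^{-n-1}\sum_{k=1}^{n}\binom{\theta}{k}\bigl(t(\eee^s-1)\bigr)^k\,\dd s.
\end{equation*}

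Next I would extend the truncated sum over $k\in\{1,\ldots,n\}$ to a full series over $k\ge 0$ without changing the value of the integral. For $k=0$ the term $s^{-n-1}$ contributes nothing since $n\ge 1$, while for each $k\ge n+1$ the factor $(\eee^s-1)^k$ has a zero of order $k$ at the origin, so $s^{-n-1}(\eee^s-1)^k$ is analytic on a disc around $0$ and its contour integral vanishes. Once the sum runs from $k=0$ to infinity, the binomial series
\begin{equation*}
(1+u)^\theta=\sum_{k=0}^{\infty}\binom{\theta}{k}u^k
\end{equation*}
with $u=t(\eee^s-1)$ identifies the inner sum with $(1+t(\eee^s-1))^\theta$, giving the claimed representation.

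The only genuine analytic point is the legitimacy of the last step: the binomial series converges and represents the principal branch $(1+u)^\theta$ only for $|u|<1$. Since $\mathcal{D}$ is a bounded domain, I can fix $T:=\sup_{t\in\mathcal{D}}|t|<\infty$ and choose the contour $\gamma$ to be a sufficiently small circle around the origin so that $|t(\eee^s-1)|<1$ uniformly for $t\in\mathcal{D}$ and $s\in\gamma$; this is possible because $\eee^s-1\to 0$ as $s\to 0$. On such a $\gamma$ the binomial series converges uniformly in $s$, which justifies the termwise integration used above, and the expression $(1+t(\eee^s-1))^\theta$ is an unambiguously defined analytic function through the principal branch of the logarithm. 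The main (very mild) obstacle is thus just this uniform-smallness choice of $\gamma$; once it is in place, all the manipulations are elementary.
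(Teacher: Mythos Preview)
Your proposal is correct and follows essentially the same route as the paper's proof: both use the Cauchy coefficient formula for $\stirlingsec{n}{k}$ from the exponential generating function, extend the sum over $k$ to all nonnegative integers (noting the extra terms contribute zero), and then invoke the binomial series on a contour small enough that $|t(\eee^s-1)|<1$ uniformly for $t\in\mathcal D$. The only cosmetic difference is the order in which the sum-integral interchange and the extension to $k\ge 0$ are carried out; your justification of the latter via analyticity of $s^{-n-1}(\eee^s-1)^k$ for $k>n$ is in fact slightly more explicit than the paper's.
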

\begin{proof}
Let $\gamma$ be an arbitrary closed contour encircling the origin such that
$$
|t(\eee^s-1)|<1/2,\qquad \text{ for all } s\in\gamma \text{ and } t\in \mathcal{D}.
$$
It is well-known that
$$
\frac{(\eee^z-1)^k}{k!}=\sum_{n=k}^{\infty}\stirlingsec{n}{k}\frac{z^n}{n!},\quad z\in\mathbb{C},\quad k\in\mathbb{N},
$$
and, thereupon,
$$
\stirlingsec{n}{k}=\frac{n!}{k!}[z^n](\eee^z-1)^k=\frac{n!}{k!2\pi\iii}\oint_{\gamma}\frac{(\eee^s-1)^k}{s^{n+1}}{\rm d}s.
$$
Note that this formula holds also for $k>n$ and $k=0$ yielding the obvious equality $\stirlingsec{n}{k}=0$. Thus,
\begin{multline*}
\mathcal{G}_{n,\theta}^{(3)}(t)=\sum_{k=1}^{n} \P[X^{(3)}_{n,\theta }=k]t^k=\theta^{-n}\sum_{k=1}^{n} \stirlingsec{n}{k}(\theta)^{\underline{k}}t^k=\frac{n!}{\theta^n2\pi\iii}\sum_{k=0}^{\infty}(\theta)^{\underline{k}}t^k\frac{1}{k!}\oint_{\gamma}\frac{(\eee^s-1)^k}{s^{n+1}}{\rm d}s\\
=\frac{n!}{\theta^n2\pi\iii}\oint_{\gamma}s^{-n-1}\sum_{k=0}^{\infty}\binom{\theta}{k}(t(\eee^s-1))^k{\rm d}s=\frac{n!}{\theta^n2\pi\iii}\oint_{\gamma}s^{-n-1}(1+t(\eee^s-1))^{\theta}{\rm d}s.
\end{multline*}
\end{proof}

\begin{proof}[Proof of Theorem~\ref{thm:mod_phi_stirling3}]
From Lemma~\ref{lem:stirling_3_integral_rep} it follows that
\begin{equation}\label{eq:moment_gener_X_n_3}	
\E \eee^{z X^{(3)}_{n,\vartheta n }}
=
\frac{n!}{(n\vartheta)^n 2\pi\iii}\oint_{\gamma}x^{-n-1}\left(1+\eee^z(\eee^x-1)\right)^{\vartheta n}{\rm d}x
,\quad z\in\C,
\end{equation}
where $\gamma$ is a sufficiently small closed contour encircling the origin. We need to estimate the  integral
%According to formula~\eqref{eq:moment_gener_X_n_3} we need to estimate the  integral
$$
\int_{\gamma}\frac{1}{x}\left(\frac{(1+\eee^z(\eee^x-1))^{\vartheta}}{x}\right)^n{\rm d}x
$$
using the saddle point method. This is done again by appealing to Theorem~\ref{thm:fedoryuk} with
$$
\Omega_x:=\C\setminus\{0\},\quad \Omega_z:=\C,\quad g(x,z):=\frac{1}{x},\quad f(x,z):=\frac{(1+\eee^z(\eee^x-1))^{\vartheta}}{x}.
$$
Fix $z_0\in\R$. The saddle points of the function $x\mapsto \log f(x,z_0)$ are the solutions to the equation
\begin{equation}\label{eq:x_3_saddle_point_equation_init}
\vartheta x \eee^{x+z_0}=1+\eee^{z_0}(\eee^{x}-1),
\end{equation}
which can also be written as
\begin{equation}\label{eq:x_3_saddle_point_equation}
\left(x-\frac{1}{\vartheta}\right)\eee^{x-1/\vartheta}=\vartheta^{-1}(\eee^{-z_0}-1)\eee^{-1/\vartheta}.
\end{equation}
Note that $1-\vartheta \eee^{1/\vartheta-1}\leq 0$, for all $\vartheta>0$. Thus,
$$
\vartheta^{-1}(\eee^{-z_0}-1)\eee^{-1/\vartheta}>-1/\eee,
$$
and one solution to~\eqref{eq:x_3_saddle_point_equation} is given by
$$
\zeta(z_0)=1/\vartheta+W_0(\vartheta^{-1}(\eee^{-z_0}-1)\eee^{-1/\vartheta})=L_3(z_0;\vartheta).
$$
Since $W_0$ is strictly increasing on $[-1/eee,+\infty)$,
$$
\zeta(z_0)> 1/\vartheta+W_0(-\vartheta^{-1}\eee^{-1/\vartheta})=0.
$$
From~\eqref{eq:x_3_saddle_point_equation_init} it is also clear that $f(\zeta(z_0),z_0)\neq 0$. In view of
$$
\left(\log f(x,z)\right)_{xx}^{\prime\prime}=\frac{\vartheta \eee^{x}\eee^{z}(1-\eee^z)}{(1+\eee^{z}(\eee^x-1))^2}+\frac{1}{x^2},
$$
and using~\eqref{eq:x_3_saddle_point_equation_init}, we obtain
$$
\left(\log f(x,z)\right)_{xx}^{\prime\prime}\Big|_{x=\zeta(z_0)}=\frac{\vartheta \zeta(z_0)+\vartheta-1}{\vartheta \zeta^2(z_0)}>0,
$$
where the inequality is a consequence of $W_0(u)>-1$, for all $u>-1/\eee$.

Let $\gamma=\gamma(z_0)$ be the circle centered at the origin and having the positive radius $\zeta(z_0)$. Let us show that the function $x\mapsto |f(x,z_0)|$ attains a unique maximum on $\gamma$ at $x=\zeta(z_0)$. Since, for $x\in\gamma(z_0)$, it holds $|f(x,z_0)|=|\zeta(z_0)|^{-1} |1+\eee^{z_0}(\eee^x-1)|^{\vartheta}$, and the function $x\mapsto 1+\eee^{z_0}(\eee^x-1)$ has positive coefficients in the Taylor expansion around the origin, the claim follows. Summarizing, by Theorem~\ref{thm:fedoryuk}, there exists $\delta(z_0)>0$ such that
\begin{align*}
\int_{\gamma}\frac{1}{x}\left(\frac{(1+\eee^z(\eee^x-1))^{\vartheta}}{x}\right)^n{\rm d}x&=(f(\zeta(z),z))^n\sqrt{-\frac{2\pi\vartheta\zeta^2(z)}{n(\vartheta\zeta(z)+\vartheta-1)}}\frac{1}{\zeta(z)}\left(1+O\left(\frac{1}{n}\right)\right)\\
&=\left(\frac{(\vartheta \zeta(z)\eee^{z+\zeta(z)})^{\vartheta}}{\zeta(z)}\right)^n\sqrt{-\frac{2\pi\vartheta}{n(\vartheta\zeta(z)+\vartheta-1)}}\left(1+O\left(\frac{1}{n}\right)\right).
\end{align*}
uniformly in the disk $\mathbb{D}_{\delta(z_0)}(z_0)$. Plugging this into~\eqref{eq:moment_gener_X_n_3} and using the Stirling approximation for the factorial we arrive at
\begin{align*}
\E \eee^{z X_{n,\vartheta n}^{(3)}}&=\left(\frac{(\vartheta \zeta(z)\eee^{z+\zeta(z)})^{\vartheta}}{\zeta(z)\vartheta \eee}\right)^n\sqrt{\frac{\vartheta}{\vartheta\zeta(z)+\vartheta-1}}\left(1+O\left(\frac{1}{n}\right)\right)\\
&=\eee^{n\varphi_3(z;\vartheta)}\sqrt{\frac{\vartheta}{\vartheta L_3(z;\vartheta)+\vartheta-1)}}\left(1+O\left(\frac{1}{n}\right)\right).
\end{align*}
Put $\mathcal{D}_3:=\bigcup_{z_0\in\R}\mathbb{D}_{\delta(z_0)}(z_0)$. The same compactness argument, as we have used in the proof of Proposition~\ref{prop:asympt_factorial2}, shows local uniformity on $\mathcal{D}_3$. The proof is complete.
\end{proof}
\begin{remark}\label{rem:uniformity_in_theta}
The claims of Theorems~\ref{thm:mod_phi_stirling1},~\ref{thm:mod_phi_stirling2} and~\ref{thm:mod_phi_stirling3} hold also locally uniformly in $\vartheta\in (0,\infty)$ in the following sense. For every compact set $K\subset (0,\infty)$, there exist domains $\mathcal{D}_{i}=\mathcal{D}_{i}(K)$, $i=1,2,3$, such that~\eqref{eq:mod_phi_1},~\eqref{eq:mod_phi_2} and~\eqref{eq:mod_phi_3} hold uniformly in $\vartheta\in K$ and $z$ in compact subsets of $\mathcal{D}_{i}=\mathcal{D}_{i}(K)$, $i=1,2,3$, respectively. %For $i=1,2$ this follows immediately from the proofs.
\end{remark}

\subsection{Proof of Theorem~\ref{theo:local_limit2}}\label{sec:local_limit}

As we have already mentioned, Theorem~\ref{theo:local_limit2} follows from Theorem 2.7 in~\cite{kabluchko_marynych_sulzbach} once we check, see Eq.~(11) in~\cite{kabluchko_marynych_sulzbach}, that for every compact set $K\subset \R$, $a\in (0,\pi)$, $\vartheta>0$ and $i=1,2,3$,
\begin{equation}\label{eq:A4_condition_general}
\sup_{t\in K}\left|\eee^{-\varphi_i(t;\vartheta)n}\int_{a}^{\pi}\left|\E \eee^{(t+\iii u)X_{n,\vartheta n}^{(i)}}\right|{\rm d}u\right|=o(n^{-1}),\quad n\to\infty.
\end{equation}
Below we shall treat in details the case $i=2$ and then explain how to check~\eqref{eq:A4_condition_general} in two other cases using similar arguments.

Let us show that there exists $\delta\in (0,1)$ and $n_0\in\N$, such that
\begin{equation}\label{eq:A4_condition_proof1}
\sup_{t\in K}\sup_{a\leq u\leq \pi} \frac{|T_n(n\vartheta \eee^{t+\iii u})|}{T_n(n\vartheta \eee^t)}\leq (1-\delta)^n,\quad n\geq n_0.
\end{equation}
This is sufficient for our purposes because
$$
K\ni t\mapsto \eee^{-\varphi_i(t;\vartheta)n} \frac{T_n(n\vartheta \eee^t)}{T_n(n\vartheta)},
$$
converges uniformly, as $n\to\infty$, to a bounded function by~\eqref{eq:mod_phi_2}.

Let $-z_{1,n},\ldots,-z_{n,n}$ denote zeros of $x\mapsto T_n(nx)$ which are all real and negative. Let $Z_n$ be the subset of zeros of $x\mapsto T_n(nx)$ lying in $[-1,-1/2]$. From Elbert's result~\eqref{eq:elbert_measures}-\eqref{eq:elbert_limit} we know that there exist $n_0\in\N$ and $c_1\in (0,1)$ such that $|\mathcal{Z}_n|> c_1 n$, for all $n\geq n_0$. Write
\begin{equation}\label{eq:A4_condition_proof2}
\frac{|T_n(n\vartheta \eee^{t+\iii u})|}{T_n(n\vartheta \eee^t)}=\prod_{k=1}^{n}\frac{|n(\vartheta \eee^{t+\iii u}+z_{k,n})|}{n(\vartheta \eee^{t}+z_{k,n})}
\leq
\prod_{k:\;-z_{k,n}\in\mathcal{Z}_n}\frac{|n(\vartheta \eee^{t+\iii u}+z_{k,n})|}{n(\vartheta \eee^{t}+z_{k,n})}=
\prod_{k:\;z_{k,n}\in [1/2,1]}\frac{|n(\vartheta \eee^{t+\iii u}+z_{k,n})|}{n(\vartheta \eee^{t}+z_{k,n})},
\end{equation}
where for the inequality we used that $|n(\vartheta \eee^{t+\iii u}+z_{k,n})|\leq n(\vartheta \eee^{t}+|z_{k,n}|)=n(\vartheta \eee^{t}+z_{k,n})$, since $z_{k,n}>0$. Let us show now that, for every $z\in [1/2,1]$,
\begin{equation}\label{eq:A4_condition_proof3}
\sup_{t\in K}\sup_{u\in [a,\pi]}\frac{|\vartheta \eee^{t+\iii u}+z|^2}{(\vartheta \eee^{t}+z)^2}\leq 1-\delta_1,
\end{equation}
for some $\delta_1\in (0,1)$. This estimate together with \eqref{eq:A4_condition_proof2} implies~\eqref{eq:A4_condition_proof1} with $\delta:=1-(1-\delta_1)^{c_1/2}$. After some elementary calculations one can see that~\eqref{eq:A4_condition_proof3} is equivalent to
$$
(z\vartheta \eee^t)^{-1}((\vartheta\eee^t)+z^2)\leq 2\delta_1^{-1}(1-\delta_1-\cos(u)),\quad t\in K,\quad u\in [a,\pi],\quad z\in [1/2,1].
$$
The latter clearly holds for a sufficiently small $\delta_1 \in (0,1-\cos(a))$ because the left-hand side is uniformly bounded for $t\in K$ and $z\in [1/2,1]$, whereas the right-hand side can be made arbitrarily large. This completes the proof in case $i=2$.

\vspace{2mm}

\noindent
{\sc Case $i=1$.} The proof proceeds in the same way
as in case $i=2$ by using that the zeros of $S_n(-nx)$, that is, the set $\{0,1/n,2/n,\ldots,(n-1)/n\}$, are all nonnegative, and the number of zeros of $x\mapsto S_n(nx)$ in $[-1,-1/2]$ grows linearly as $n\to\infty$.

\noindent
{\sc Case $i=3$.} The proof proceeds in the same way
as in case $i=2$ by using that the zeros of $\mathcal{G}_{n,\vartheta n}^{(3)}$ are all nonpositive, and their number grows linearly in an arbitrary subset of $[0,m_\vartheta]$ having positive Lebesgue measure, as $n\to\infty$. The latter claim is secured by Proposition~\ref{prop:convergence_of_zeros_of_x_3}.

\section{Proofs for Section~\ref{sec:finite_free}}\label{sec:finite_free_proof}

\begin{proof}[Proof of Proposition~\ref{prop:zeros}]
Since the function $\Psi$ does not vanish on $\mathcal{D}\cap \R$, there exists a domain $\widehat{\mathcal{D}}$ such that $\mathcal{D}\cap \R\subset \widehat{\mathcal{D}}$ and $\Psi$ does not vanish on $\widehat{\mathcal{D}}$. Thus,~\eqref{eq:mod_phi_def_conv} entails
$$
\lim_{n\to\infty}\frac{1}{w_n}\log \E \eee^{z X_n}~=~\varphi(z),
$$
locally uniformly on $\widehat{\mathcal{D}}$, and
$$
\lim_{n\to\infty}\frac{1}{w_n}\log \mathcal{G}_n(t)~=~\varphi(\log t),
$$
locally uniformly on
$\exp(\widehat{\mathcal{D}}):=\{\eee^z:z\in\widehat{\mathcal{D}}\}$.
Since the locally uniform convergence of analytic functions implies
locally uniform convergence of their derivatives, we obtain
\begin{equation}\label{eq:mod_phi_to_zeros_proof1}
\lim_{n\to\infty}\frac{1}{w_n}\sum_{x\in \Zeros(\mathcal G_n)}\frac{1}{t-x}~=~\frac{\varphi'(\log t)}{t},\quad t\in\exp(\widehat{\mathcal{D}}).
\end{equation}
Note that
$$
\frac{1}{w_n}\sum_{x\in \Zeros(\mathcal G_n)}\frac{1}{t-x}=\int_{[0,\infty)}\frac{\mathcal{Z}_n(\dd x)}{t+x},\quad t\in\C\setminus (-\infty,0],
$$
and the right-hand side is equal to the negative of the Stieltjes transform of $\mathcal{Z}_n$ evaluated at $-t$. Therefore, equation~\eqref{eq:mod_phi_to_zeros_proof1} tells us that the Stieltjes transform of $\mathcal{Z}_n$ converges locally uniformly on an open subset of $\C$ to a limit, as $n\to\infty$. Thus, by part (vi) of Proposition 2.1.2 in \cite{pastur_shcherbina_book} (where the condition that $\mathcal{Z}_n(\R) = {\rm deg}(\mathcal G_n)/w_n$ stays bounded is required but not stated explicitly), $\mathcal{Z}_n$ converges vaguely to some finite measure $\mathcal{Z}$, as $n\to\infty$. Since for every $t\in\C\setminus [0,+\infty)$, the function $t\mapsto 1/(t-x)$ is continuous on $[0,+\infty)$ and vanishes at $+\infty$, it follows that the Stieltjes transform of $\mathcal Z_n$ converges to that of $\mathcal Z$ pointwise on $\C\setminus [0,+\infty)$. Since the latter is equal to $\varphi'(\log (-t))/t$ for all $-t\in \exp(\widehat{\mathcal{D}})$, the function $\varphi'(\log (-t))/t$ admits an analytic continuation to $\C\setminus [0,+\infty)$ and is equal to the Stieltjes transform of $\mathcal Z$ there.
\end{proof}

\begin{proof}[Proof of Proposition~\ref{prop:density_properties}]
Recall from Proposition~\ref{prop:convergence_of_zeros_of_x_3} that the Stieltjes transform of $\mathcal{Z}^{\vartheta}$ is given by
\begin{equation}\label{eq:formula_st_x_3_measure_explicit}
\int_{[0,\infty)}\frac{\mathcal{Z}^{\vartheta}(\dd x)}{z-x}~=~
\frac{\vartheta}{z} -\frac{\vartheta^2}{z(z+1)} \frac{W_0\left(\frac {- z^{-1} - 1}{\vartheta \eee^{1/\vartheta}}\right)}{1 + \vartheta W_0\left(\frac {- z^{-1} - 1}{\vartheta \eee^{1/\vartheta}}\right)},
\qquad z\in \C\setminus [0,+\infty).
\end{equation}
The total mass of $\mathcal{Z}^{\vartheta}$ can be calculated as follows:
$$
\mathcal{Z}^{\vartheta}(\R)=\lim_{s\to-\infty}s\int_{[0,\infty)}\frac{\mathcal{Z}^{\vartheta}(\dd x)}{s-x}=
\begin{cases}
1,&\quad \vartheta\geq 1,\\
\vartheta, &\quad \vartheta\in (0,1).
\end{cases}
$$
In the case $\vartheta\in (0,1)$ this is trivial since the ratio $W_0/(1+\vartheta W_0)$ on the right-hand side of~\eqref{eq:formula_st_x_3_measure_explicit} is bounded, whereas, for $\vartheta>1$ the formula follows from Taylor's expansion
$$
W_0\left(\frac {- s^{-1} - 1}{\vartheta \eee^{1/\vartheta}}\right)=W_0\left(-\frac{1}{\vartheta}\eee^{-1/\vartheta}\right)-\frac{1}{s\vartheta \eee^{1/\vartheta}}W_0^{\prime}\left(-\frac{1}{\vartheta}\eee^{-1/\vartheta}\right)+O\left(\frac{1}{s^2}\right)\overset{\eqref{eq:W_0_der}}{=}-\frac{1}{\vartheta}-\frac{1}{\vartheta-1}\frac{1}{s}+O\left(\frac{1}{s^2}\right),\quad s\to -\infty.
$$
Finally, the case $\vartheta=1$ can be treated using the first  expansion in~\eqref{eq:asympt_W0_at_-1over_e} with $\delta=-1/(s\eee)$ which yields $\int_{[0,\infty)}\frac{\mathcal{Z}^{\vartheta}(\dd x)}{s-x}=1/s+O(|s|^{-3/2})$, as $s\to-\infty$.

The formula for the density given in~\eqref{eq:density_alloc_explicit} follows from the Stieltjes--Perron inversion formula; see~\cite[Proposition 2.1.2 on p.~35]{pastur_shcherbina_book} and~\cite[pp.~124--125]{akhiezer_book}. More precisely, part (vii) of~\cite[Proposition 2.1.2]{pastur_shcherbina_book}  yields the explicit formula~\eqref{eq:density_alloc_explicit} for the density on $(0,\infty)$ (and the existence of this density), whereas part~(v) implies the absence of the atom at $0$ taking into account the asymptotics~\eqref{eq:density_alloc_asympt_0} as $t\downarrow 0$, which we shall prove below.

To prove that the density vanishes outside $(0,m_{\vartheta})$ for $\vartheta\neq 1$,  recall that $W_0$ has a branch cut along $(-\infty, -1/\eee]$, while on $(-1/\eee,+\infty)$ it stays real. For $\vartheta>0$ we have $\vartheta \eee^{1/\vartheta}\geq \eee$ with equality iff $\vartheta = 1$. For $\vartheta \neq 1$ the following conditions on $t>0$ are equivalent:
$$
\frac {- t^{-1} - 1}{\vartheta \eee^{1/\vartheta}} > - \frac 1 \eee
\quad
\Leftrightarrow
\quad
t > \frac 1 {\vartheta \eee^{(1/\vartheta) - 1} - 1}= m_\vartheta.
$$
Hence, for $\vartheta \neq 1$ and  $t>m_\vartheta > 0$, the imaginary part on the right-hand side of~\eqref{eq:density_alloc_explicit} vanishes.

To prove~\eqref{eq:density_alloc_asympt_0}, we rely on the asymptotics
\begin{equation}\label{eq:asympt_lambert_rep}
W_0(-R+ \iii 0) = \log (R/\log R) + \pi \iii + o(1),
\qquad
R\to+\infty.
\end{equation}
To derive~\eqref{eq:asympt_lambert_rep}, we put $W_0(-R+ \iii 0) = \log (R/\log R) + \pi \iii + \delta(R)$ with an unknown $\delta(R)$. Then,
\begin{equation}\label{eq:delta(R)}
\left(1  + \frac{\pi \iii -\log \log R}{\log R} + \frac{\delta(R)} {\log R}\right) \eee^{\delta(R)} = 1.
\end{equation}
It is known from~\cite[Lemma~2.3~(d)]{elbert1} that $\Im \delta(R)\to 0$ as $R\to+\infty$. If, along some subsequence of $R$'s diverging to $+\infty$, $\Re \delta(R)/\log R$ stays bounded away from $0$, then the limit of the absolute value of the left-hand side of~\eqref{eq:delta(R)} can not be equal to $1$, which is a contradiction. Hence, $\delta(R)/\log R$ goes to $0$ and it follows from~\eqref{eq:delta(R)} that, in fact, $\delta(R) \to 0$ as $R\to +\infty$,
%and in fact,
%$$
%delta(R) = \frac{\pi \iii -\log \log R}{\log R} + o\left(\frac{1}{\log R}\right)
%$$
%as $R\to+\infty$,
thus proving~\eqref{eq:asympt_lambert_rep}.

Writing $R:= \frac {t^{-1} + 1}{\vartheta \eee^{1/\vartheta}}$ we observe that
$$
\Im \frac{W_0(-R + \iii 0)}{1 + \vartheta W_0(-R + \iii 0)}
\sim \frac{\pi}{\vartheta^2 (\log R)^2},
\qquad R\to +\infty.
$$
Plugging this into~\eqref{eq:density_alloc_explicit} and noting that $\log R \sim |\log t|$ completes the proof of~\eqref{eq:density_alloc_asympt_0}.

To prove~\eqref{eq:density_alloc_asympt_infty_vartheta1}, we apply the second equality in~\eqref{eq:asympt_W0_at_-1over_e} with $\delta:= 1/(t\eee)$. This yields
$$
\frac{W_0\left(\frac {- t^{-1} - 1 + \iii 0}{\eee}\right)}{1 + W_0\left(\frac {- t^{-1} - 1 + \iii 0}{\eee}\right)}
\sim
\frac{\iii}{\sqrt{2/t}},
\qquad t\to +\infty,
$$
and~\eqref{eq:density_alloc_asympt_infty_vartheta1} follows by plugging this into~\eqref{eq:density_alloc_explicit}.

Let us prove~\eqref{eq:density_alloc_asympt_infty_vartheta_neq1}.  For $t= m_\vartheta - \eps$, the argument of the Lambert function in~\eqref{eq:density_alloc_explicit} satisfies
$$
\frac {- t^{-1} - 1 + \iii 0}{\vartheta \eee^{1/\vartheta}} = -\frac 1 \eee - \frac{\eps}{m_\vartheta^2 \vartheta \eee^{1/\vartheta}} +\iii 0, \qquad \eps \downarrow 0.
$$
The claim follows by applying the second formula in~\eqref{eq:asympt_W0_at_-1over_e} with $\delta = \frac{\eps}{m_\vartheta^2 \vartheta \eee^{1/\vartheta}}$ and plugging the result into~\eqref{eq:density_alloc_explicit}.
\end{proof}

\section{Appendix}

\subsection{The Lambert \texorpdfstring{$W$}{W}-function and its principal branch \texorpdfstring{$W_0$}{W0}.}\label{appendix_Lambert}

%\begin{figure}[t]
%\begin{center}
%\includegraphics[width=0.49\textwidth]{Lambert_region.pdf}
%\end{center}
%\caption
%{
%The image of the slitted plane $\C\setminus [-1/\eee,\infty)$ under the principal branch $W_0$. The curve bounding the region is given by %$\gamma= \{z\in \C:  z \eee^{z} \in (-\infty, -1/\eee], |\Im z|<\pi\}$. The part of $\gamma$ that is located in the upper, respectively, lower, %half-plane is the image of $(-\infty,-1/\eee) + \iii 0$, respectively, $(-\infty,-1/\eee) - \iii 0$, under $W_0$.
%}
%\label{fig:lambert_region}
%\end{figure}

The \emph{Lambert $W$-function} is a multivalued analytic function defined by the implicit equation
$$
W(z) \eee^{W(z)} = z, \qquad z\in\C.
$$
It has a branch point at $z=-1/\eee$ and infinitely many branches whose structure was discussed in detail in~\cite{corless_etal}.
We mostly need the principal branch $W_0$ which is defined on the whole complex plane with a branch cut at $(-\infty, -1/\eee]$. On the real line, the function $w\mapsto w\eee^w=z$ has a unique minimum $-1/\eee$ attained at $w=-1$. Thus, there is a well-defined inverse function $W_0:(-1/\eee , \infty) \to (-1,\infty)$, called the principal branch, which is monotone increasing, and satisfies
$$
W_0(z) \eee^{W_0(z)} = z, \qquad \lim_{x\downarrow -1/\eee} W_0(x) = -1, \qquad W_0(0)=0, \qquad  \lim_{x\uparrow +\infty} W_0(x) = +\infty.
$$
Moreover, it is possible to extend $W_0$ analytically to the slitted complex plane $\C\setminus (-\infty, -1/\eee]$.  The principal branch maps $\C\setminus (-\infty, -1/\eee]$ conformally to the region $\{a+b \iii : a > -b  \cot b, b\in (-\pi, \pi)\}$ (where $0\cot 0:=1$); see~\cite[Lemma~2.3 (ii)]{elbert1}.

We need the limit values of $W_0(z)$ when the complex variable $z$ approaches the branch cut $(-\infty, -1/\eee]$. Such limit value depends on whether $z$ stays in the upper half-plane or in the lower half-plane. We can define
$$
W_0(x + \iii 0) :=  \lim_{\eps\downarrow 0} W_0(x + \iii \eps),
\qquad
W_0(x - \iii 0) :=  \lim_{\eps\downarrow 0} W_0(x - \iii \eps),
\qquad
x\in (-\infty, -1/\eee).
$$
These limit values are complex conjugate to each other:
$$
W_0(x - \iii 0) = \overline {W_0(x + \iii 0)},
\qquad
x\in (-\infty, -1/\eee).
$$
The properties of $W_0(x \pm \iii 0)$ are summarized in~\cite[Lemma~2.3]{elbert1}.
In particular, it is known  that $x\mapsto \Im W_0(x + \iii 0)$ is a decreasing function of  $x\in (-\infty, -1/\eee)$ and that
$$
\lim_{x\downarrow -\infty} \Im W_0(x + \iii 0) = \pi,
\qquad
\lim_{x\uparrow -1/\eee} \Im W_0(x + \iii 0) = 0.
$$
We need the following formula for the derivative of the Lambert function:
\begin{equation}\label{eq:W_0_der}
W_0'(z) = \frac{W_0(z)}{z(1+W_0(z))}, \qquad z \in \C\setminus (-\infty,-1/\eee], \quad z\neq 0.
\end{equation}
In the first equation in formula~\eqref{eq:LDP_inverses} we have also encountered the function $W_{-1}(z)$. For  real $z\in (-1/\eee,0)$ it is defined as the unique solution $w$ to the equation $w\eee^{w}=z$ lying in $(-\infty,-1)$.

The Puiseux series of $W_0$ near the branch point $-1/\eee$ looks as follows, see~\cite[Equation~(4.22)]{corless_etal},
\begin{equation}\label{eq:asympt_W0_at_-1over_e}
W_0\left(-\frac 1\eee + \delta \right) = -1 + \sqrt{2\eee \delta} + O(\delta),\quad W_0\left(-\frac 1\eee - \delta \pm \iii 0\right) = -1 \pm\iii \sqrt{2\eee \delta} + O(\delta),
\qquad \delta\downarrow 0.
\end{equation}

\subsection{Uniformity in the saddle point method.}\label{sec:saddle_point}

While checking mod-$\varphi$ convergence for a sequence of random variables it is crucial to check that the limit relation
~\eqref{eq:mod_phi_def_conv} holds locally uniformly (in variable $z$) on a suitable domain $\mathcal{D}$ containing an interval of the real line.
The key tool in our asymptotic analysis is the saddle point method and, therefore, we need an appropriate result which ensures that the saddle point asymptotic expansion is locally uniform over $z\in\mathcal{D}$. General results of this type suitable for our needs can be found in~\cite[Chapter IV, \S 4]{fedoryuk_book}. For the ease of reference we provide below the corresponding theorem adopted to our settings.

For a Laplace-type integral
$$
I_n(z) = \int_{\gamma} (f(x,z))^n g(x,z) {\rm d} x
$$
we assume that:
\begin{itemize}
\item [(A1)] The functions $f(x,z)$ and $g(x,z)$ are analytic in a domain $\Omega_x\times \Omega_{z}\subset \C^2$, where $\Omega_x$ and $\Omega_{z}$ are domains in $\C$,
\item[(A2)] $\gamma: [0,1] \to \Omega_x$ is a piecewise smooth curve contained in $\Omega_x$ which has no self-intersections except, possibly, $\gamma(0)= \gamma(1)$.
\item [(A3)] For some $z_0 \in \Omega_{z}$, the contour $\gamma$ is a \emph{saddle point contour} meaning that the following conditions are satisfied:
\begin{itemize}[leftmargin=0cm]
\item[(i)] the function $x\mapsto \log f(x,z_0)$ has a simple saddle point $\zeta(z_0)$ in the relative interior of $\gamma$, that is,
$$
f(\zeta(z_0), z_0) \neq 0,
\quad
\frac{\partial \log f(x,z_0)}{\partial x}\Big|_{x=\zeta(z_0)}=0,
\quad
\frac{\partial^2 \log f(x,z_0)}{\partial x^2}\Big|_{x=\zeta(z_0)}\neq 0.
$$
Note  that although the logarithm is defined up to a summand $2\pi \iii n$, $n\in \Z$, only, both derivatives are well-defined.
\item[(ii)]
The function $x\mapsto |f(x,z_0)|$ has a unique maximum on $\gamma$ attained at $x = \zeta(z_0)$.
\item[(iii)] $g(\zeta(z_0), z_0) \neq 0$.
\item[(iv)]  In a small disk around $\zeta(z_0)$, the sublevel set $\{x\in \Omega_x: |f(x,z_0)| <  |f(\zeta(z_0), z_0)|\}$  consists of two sectors. The contour $\gamma$ passes through both of these sectors.
\end{itemize}
\end{itemize}

The next standard result is a uniform in $z$ expansion of $I_n(z)$, when  $n\to +\infty$.
\begin{theorem}\label{thm:fedoryuk}
Under the assumptions (A1), (A2), (A3) there exists $\delta>0$ such that, for every $z$ in the open disk $\bD_{\delta}(z_0):=\{z\in\C: |z-z_0|<\delta\}$, the function $x\mapsto \log f(x,z)$ has a unique saddle point $\zeta(z)$ satisfying $|\zeta(z) - \zeta(z_0)| <\delta$. Moreover, uniformly in the open disk $\bD_{\delta}(z_0)$ the following asymptotics holds true:
\begin{equation}\label{eq:fedoryuk_asymp}
I_n(z)
=
(f(\zeta(z),z))^{n} \cdot \sqrt{-\frac{2\pi}{n \, (\log f)''_{xx}(\zeta(z),z)}}
\cdot
(g(\zeta(z),z)+O(1/n)),
\quad
n\to+\infty.
\end{equation}
The branch of the square root is chosen such that ${\rm arg}(\sqrt{-(\log f)''_{xx}(\zeta(z_0),z_0)})$ is equal to the angle between the positive direction of the tangent line to $\gamma$ at $\zeta(z_0)$ and the positive direction of the real axis.
\end{theorem}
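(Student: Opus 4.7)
The argument has three stages: construction of a $z$-dependent saddle, reduction to a local integral, and evaluation of the local integral with uniform control.

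First I would establish the existence of an analytic family of saddle points. Define $F(x,z) := \partial_x \log f(x,z)$, which is well-defined and analytic near $(\zeta(z_0), z_0)$ because $f(\zeta(z_0), z_0) \neq 0$. Since $\partial_x F(\zeta(z_0), z_0) = (\log f)''_{xx}(\zeta(z_0), z_0) \neq 0$ by (A3)(i), the analytic implicit function theorem produces a unique analytic solution $z \mapsto \zeta(z)$ in some disk $\bD_{\delta_1}(z_0)$, with $\zeta(z)$ continuous in $z$. By shrinking $\delta_1$ we may assume that $\zeta(z)$ stays close to $\zeta(z_0)$ and that $g(\zeta(z),z)$, $f(\zeta(z),z)$ and $(\log f)''_{xx}(\zeta(z),z)$ are all uniformly bounded away from zero on $\bD_{\delta_1}(z_0)$.

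Second, I would deform the contour and localize. Fix a small disk $U$ around $\zeta(z_0)$ inside the sublevel-set structure described in (A3)(iv). Using (A3)(ii) and a compactness argument on the compact set $\gamma \setminus U$, there is $\eta > 0$ such that $|f(x,z_0)| \leq (1-2\eta)|f(\zeta(z_0),z_0)|$ on $\gamma \setminus U$. By continuity of $f$ and $\zeta$, shrinking $\delta_1$ further we obtain, uniformly for $z \in \bD_{\delta_1}(z_0)$, the estimate $|f(x,z)| \leq (1-\eta)|f(\zeta(z),z)|$ on $\gamma \setminus U$, so this portion contributes at most $C(1-\eta)^n |f(\zeta(z),z)|^n$, which is exponentially smaller than the claimed asymptotics. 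Inside $U$ I replace the arc of $\gamma$ by a short steepest-descent arc through $\zeta(z)$, i.e.\ the level curve $\{x : \Im \log f(x,z) = \Im \log f(\zeta(z),z)\}$ oriented so that $\Re \log f$ decreases away from $\zeta(z)$; by (A3)(iv) and Cauchy's theorem (applied in a simply connected subdomain where $f$ does not vanish, using analyticity of $f,g$ from (A1)) this deformation does not change the integral.

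Third, on the local steepest-descent arc I parametrize $x = \zeta(z) + \tau$ and Taylor expand
\begin{equation*}
\log f(x,z) = \log f(\zeta(z),z) + \tfrac{1}{2}A(z)\,\tau^2 + R(\tau, z),\qquad A(z) := (\log f)''_{xx}(\zeta(z),z),
\end{equation*}
with $R(\tau,z) = O(\tau^3)$ uniformly on $\bD_{\delta_1}(z_0)$. Substituting $\tau = s/\sqrt{n}$, extending the $s$-integration to $\R$ along the steepest-descent direction (an extension that costs only an exponentially small error), and expanding $g(\zeta(z)+\tau, z) = g(\zeta(z), z) + O(\tau)$, I evaluate a Gaussian integral to obtain the factor $\sqrt{-2\pi/(n A(z))}$ with the branch of the square root prescribed by the steepest-descent direction, which matches the one in the statement. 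The odd $\tau$-terms give zero, and the next even term yields the $O(1/n)$ correction. All error bounds depend continuously on $z$, so the final disk $\bD_\delta(z_0) \subset \bD_{\delta_1}(z_0)$ of uniformity is obtained automatically.

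\textbf{Main obstacle.} The nontrivial point is preserving the strict-maximum condition (A3)(ii) uniformly for $z$ near $z_0$: it is assumed only at $z_0$, and without it the localization step fails. The compactness of $\gamma \setminus U$ together with uniform continuity of $(x,z)\mapsto |f(x,z)|/|f(\zeta(z),z)|$ is exactly what makes this step robust under perturbations of $z$, and this is the only place where assumption (A3)(ii) is essential.
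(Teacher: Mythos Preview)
Your proposal is correct and follows the standard direct proof of the saddle-point method with a parameter: implicit-function construction of $\zeta(z)$, localization via compactness and continuity, local deformation to steepest descent, and Gaussian evaluation with uniform error control. The only detail left implicit is connecting the endpoints of the original arc of $\gamma$ inside $U$ with the endpoints of the $z$-dependent steepest-descent arc; this is routine.

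The paper's proof takes a different, much shorter route. Rather than reproving the saddle-point asymptotics, it cites a ready-made theorem from Fedoryuk's book (Theorem~1.7 on p.~174), which gives exactly the desired expansion provided one can write $f(x,z)=\eee^{S(x,z)}$ with $S$ analytic. The only obstruction is that when $\gamma$ is closed, $\log f$ need not exist globally along $\gamma$. The paper therefore performs only one step of your argument: it splits $I_n(z)$ into the piece over $\gamma\cap\bD_\eps(\zeta(z_0))$, where a single-valued logarithm of $f$ exists (so Fedoryuk's theorem applies directly), and the remainder over $\gamma\setminus\bD_\eps(\zeta(z_0))$, which is bounded by $O((|f(\zeta(z),z)|-\rho)^n)$ via exactly the compactness/continuity argument you describe. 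Your approach buys self-containment; the paper's buys brevity by outsourcing the Gaussian evaluation and uniform error analysis to the textbook.
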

\begin{proof}
This is Theorem 1.7 on p.~174 in~\cite{fedoryuk_book} if we can write $f(x,z) = \eee^{S(x, z)}$ for a function $S(x,z)$ which is analytic in $\Omega_x \times \Omega_z$. The slightly more general case stated above can be reduced to the special case as follows. The problem is that if $\gamma$ is closed, $\log f(x, z)$ might not be well-defined on the whole contour $\gamma$ (even if $x\mapsto f(x, z)$ does not vanish on $\gamma$).  We split the integral $I_n(z)$, taken over the contour $\gamma$, into two parts. The first integral $I_{n;1}(z)$ is taken over the part of $\gamma$ contained in a disk $\bD_{\eps}(\zeta(z_0))$ around $\zeta(z_0)$ which is so small that $x\mapsto f(x, z_0)$ and $x\mapsto g(x, z_0)$ do not vanish on this disk. The second part $I_{n;2}(z)$ is the integral over the remaining part $\gamma\setminus \bD_{\eps}(\zeta(z_0))$. For a sufficiently small $\delta$ it is even true that $f(x, z) \neq 0$ and $g(x, z) \neq 0$ for all $x\in \bD_{\eps}(\zeta(z_0))$ and $z \in \bD_{\delta}(z_0)$, by continuity. We can therefore write $f(x, z) = \eee^{S(x,z)}$ and apply the aforementioned result of~\cite{fedoryuk_book} to get~\eqref{eq:fedoryuk_asymp} with $I_{n;1}(z)$ instead of $I_n(z)$ on the left-hand side. Note that the leading term in this asymptotics is of order $n^{-1/2}(f(\zeta(z),z))^{n}$. The second part of the integral, that is, $I_{n;2}(z)$ can be estimated as follows. If $\rho>0$ is sufficiently small, then the maximum of $x\mapsto |f(x,z)|$ over $\gamma\setminus \bD_{\eps}(\zeta(z_0))$ is smaller than $|f(\zeta(z) , z)| - \rho$, for all $z\in \bD_{\delta}(z_0)$, as a consequence of condition (ii) and continuity arguments. Hence, $|I_{n;2}(z)|$ can be estimated from above by $O((|f(\zeta(z) , z)| - \delta)^n)$ with a constant in $O$-term that does not depend on $z$, which is enough for our purposes.
\end{proof}

\section*{Acknowledgements}
ZK was supported by the German Research Foundation (DFG) under Germany's Excellence Strategy  EXC 2044 -- 390685587, Mathematics M\"unster: Dynamics - Geometry - Structure. AM was supported by the Alexander von Humboldt Foundation. HP was supported by the Research Training Group 1953 of the DFG. We are very grateful to Christoph Th\"{a}le and V.\ A.\ Vatutin for drawing our attention to a number of important references missing in the first version of the manuscript.

\bibliography{touchard_lambert_bib}
\bibliographystyle{plainnat}

\end{document}